\numberwithin{equation}{section}
\newtheorem{theorem}{Theorem}[section]
\newtheorem{lemma}[theorem]{Lemma}
\theoremstyle{definition}
\newtheorem{definition}{Definition}[section]
\theoremstyle{remark}
\newtheorem{remark}{\bf Remark}[section]
\newtheorem{corollary}[theorem]{\bf Corollary}
\newtheorem{assumption}{\bf Assumption}[section]
\let \ssection=\section
\renewcommand{\section}{\setcounter{equation}{0}\ssection}
\def\^#1{\if#1i{\accent"5E\i}\else{\accent"5E #1}\fi}
\def\"#1{\if#1i{\accent"7F\i}\else{\accent"7F #1}\fi}
\newcommand{\bN}{\mathbb N}
\newcommand{\RR}{\mathbb R}
\newcommand{\bR}{\mathbb R}
\newcommand{\ZZ}{\mathbb Z}
\newcommand{\cI}{\mathcal I}
\newcommand{\cF}{\mathcal F}
\newcommand{\cM}{\mathcal M}
\newcommand{\cL}{\mathcal L}
\newcommand{\cK}{\mathcal K}
\newcommand{\cP}{\mathcal P}
\newcommand{\bQ}{\mathbb Q}
\newcommand{\bA}{\mathbb A}
\newcommand{\bB}{\mathbb B}
\newcommand{\bG}{\mathbb G}
\newcommand{\bW}{\mathbb W}
\def\^#1{\if#1i{\accent"5E\i}\else{\accent"5E #1}\fi}
\def\"#1{\if#1i{\accent"7F\i}\else{\accent"7F #1}\fi}
\begin{document}
 \title[Accelerated finite elements schemes] 
{Accelerated finite elements schemes
for parabolic stochastic partial differential equations} 
\author[I. Gy\"ongy]
{Istv\'an Gy\"ongy}

\address{Maxwell Institute and School of Mathematics,
University of Edinburgh,
King's  Buildings,
Edinburgh, EH9 3JZ, United Kingdom}
\email{gyongy@maths.ed.ac.uk}

 \author[A. Millet]{Annie Millet}
\address
{SAMM (EA 4543), Universit\'e Paris 1 Panth\'eon Sorbonne,
 90 Rue de Tolbiac, 75634 Paris Cedex 13, France {\it  and }
Laboratoire de Probabilit\'es, Statistique et Mod\'elisation 
(LPSM, UMR 8001).}
\email { annie.millet@univ-paris1.fr {\it and} millet@lpsm.paris}

\subjclass[2010]{Primary: 60H15; 65M60 Secondary: 65M15; 65B05}

\keywords{Stochastic parabolic equations, Richardson extrapolation, finite elements}

\begin{abstract}
For a class of finite elements approximations for linear stochastic parabolic PDEs 
it is proved that one can accelerate the rate of convergence by Richardson extrapolation. 
More precisely, by taking appropriate mixtures of finite elements approximations 
one can accelerate the convergence to any given speed  provided the coefficients,  
 the initial and free data are sufficiently smooth.  
\end{abstract}

\maketitle

\smallskip

\section{Introduction}                                                                     \label{intro}
We are interested in finite elements approximations 
for Cauchy problems for stochastic parabolic PDEs of the form 
of equation \eqref{SPDE1} below. Such kind of equations arise 
in various fields of sciences and engineering, for example in 
nonlinear filtering of partially observed diffusion processes. 
Therefore these equations have been intensively studied in 
the literature, and theories for their solvability and 
numerical methods for approximations of their solutions 
have been developed. Since the computational effort to get 
reasonably accurate numerical solutions grow rapidly with the 
dimension $d$ of the state space, it is important to investigate the possibility 
of accelerating the convergence of spatial discretisations 
by Richardson extrapolation. About a century ago Lewis Fry Richardson 
had the idea in \cite{R1911} that the speed of convergence of 
numerical approximations, which 
depend on some parameter $h$ converging to zero, can be increased
if one takes appropriate linear combinations of approximations corresponding to different parameters.
 This method to accelerate the convergence, called 
Richardson extrapolation, works when the approximations admit a power series  expansion 
in $h$ at $h=0$ with a remainder term, which can be estimated by a higher power of $h$.  
In such cases, taking appropriate mixtures of approximations with different parameters, one can eliminate all 
other terms but the 
zero order term and the remainder in the expansion. In this way,   the order of 
accuracy of the mixtures is the exponent $k+1$ of the power $h^{k+1}$, that  
estimates the remainder. 
For various numerical methods applied to solving deterministic partial differential equations (PDEs)
it has been proved that such expansions exist 
and that Richardson extrapolations 
can spectacularly increase the speed of convergence of the methods, 
see, e.g., \cite{Ran1}, \cite{Ran2} and  \cite{S}. Richardson's idea has also been applied to numerical 
solutions of stochastic equations. It was shown first in \cite{TT} that by Richardson 
extrapolation one can accelerate the weak convergence of Euler approximations 
of stochastic differential equations. Further results in this direction can be found in 
\cite{LP}, \cite{MT} and the references therein. For stochastic PDEs the first result on accelerated finite 
difference schemes appears in \cite{GK}, where it is shown that by Richardson 
extrapolation one can accelerate the speed of finite difference schemes in the 
spatial variables for linear stochastic parabolic PDEs to any high order, provided 
the initial condition and free terms are sufficiently smooth. This result was extended 
to (possibly) degenerate stochastic PDEs in  
to \cite{GG}, \cite{H1} and \cite{H2}. Starting with \cite{W2005} finite 
elements approximations for stochastic PDEs have been investigated in many 
publications, see, for example, 
\cite{BrzPro}, \cite{CaPr}, \cite{Haus},  
\cite{KLL}, \cite{KP} and \cite{Y}. 

Our main result, Theorem  \ref{theorem main2} in this paper,  states 
that for a class of finite elements approximations 
for stochastic parabolic PDEs given in the whole space 
an expansion in terms of powers of a parameter $h$, 
proportional to the size of the finite elements, exists up to any high order, 
if the coefficients, the initial data and the free terms are sufficiently smooth.
 Then clearly, we can apply Richardson extrapolation 
 to such finite elements approximations in order to 
 accelerate the convergence. The speed we can achieve depends on 
 the degree of smoothness of the coefficients, the initial data and free terms; 
 see Corollary \ref{corollary1}.
 Note that due to the symmetry we require for the finite elements, 
 in order to achieve an accuracy of order $J+1$
 we only need $\lfloor \frac{J}{2} \rfloor$  
 terms in the mixture of finite elements approximation. 
As far as we know this is the first result on accelerated finite elements by Richardson 
extrapolation for stochastic parabolic equations. 
There are nice results on Richardson extrapolation for finite elements 
schemes in the literature for some (deterministic) elliptic problems; 
see,  e.g., \cite{ASW}, \cite{BLR}  and the literature therein. 

We note that in the present paper we consider stochastic PDEs on the whole 
space $\bR^d$ in the spatial variable, and our finite elements approximations 
are the solutions of infinite dimensional systems of equations. 
Therefore one may think that our accelerated finite elements 
schemes cannot have any practical use. 
In fact they can be implemented if first we localise 
the stochastic PDEs in the spatial variable by multiplying their coefficients, 
initial and free data by sufficiently smooth non-negative 
``cut-off" functions with value 1 on a ball of radius $R$ 
and vanishing outside of a bigger ball. 
Then our finite elements schemes 
for the ``localised stochastic PDEs" are fully implementable and one can show 
that the results of the present paper can be extended to them. Moreover, 
by a theorem from \cite{GG} the error caused by the localization 
is of order $\exp(-\delta R^2)$  within a ball of radius $R'<R$. 
Moreover, under some further constraints about a bounded domain 
$D$ and particular classes
of finite elements such as those described in subsections  
\ref{fel1}-\ref{general_example},
our arguments could extend to parabolic stochastic PDEs on $D$ 
with periodic boundary conditions. 
Note that our technique relies on finite elements defined 
by scaling and shifting  one given mother element,
and that the dyadic rescaling used to achieve a given speed of 
convergence is similar to that of wavelet approximation. 
We remark that our accelerated finite elements approximations  
can be applied also to implicit Euler-Maruyama time discretisations 
of stochastic parabolic PDEs to achieve higher order convergence 
with respect to the spatial mesh parameter of fully discretised schemes. 
However, as one can see by adapting and argument from \cite{DG}, 
the strong rate of convergence of these fully discretised schemes 
with respect to the temporal mesh parameter  cannot be accelerated 
by Richardson approximation. Dealing with weak speed of convergence 
of time discretisations is beyond the scope of this paper.

In conclusion we introduce some notation used in the paper. All random elements 
are defined on a fixed probability space $(\Omega,\cF,P)$ equipped with 
an increasing family $(\cF_t)_{t\geq0}$ of $\sigma$-algebras 
$\cF_t\subset\cF$. The predictable $\sigma$-algebra of subsets of 
$\Omega\times[0,\infty)$ is denoted by $\cP$, and the $\sigma$-algebra 
of the Borel subsets of $\bR^d$ is denoted by ${\mathcal B}(\bR^d)$. We use the notation
$$
D_i=\frac{\partial}{\partial x_i}, 
\quad 
D_{ij}=D_iD_j=\frac{\partial^2}{\partial x_i\partial x_j}, 
\quad
i,j=1,2,...,d
$$
for first order and second order partial derivatives in  $x=(x_1,...,x_d)\in\bR^d$. 
For integers $m\geq0$  the Sobolev space $H^m$ is defined as the closure 
of $C_0^{\infty}$, the space of real-valued smooth functions $\varphi$ on $\bR^d$ with 
compact support, in the norm $|\varphi|_m$ defined by 
\begin{equation}                                                \label{H}
|\varphi|_m^2=\sum_{|\alpha|\leq m}\int_{\bR^d}|D^{\alpha}\varphi(x)|^2\,dx, 
\end{equation}
where $D^{\alpha}=D_1^{\alpha_1}... D_d^{\alpha_d}$ and 
$|\alpha|=\alpha_1+\cdots +\alpha_d$ for multi-indices 
$\alpha=(\alpha_1,...,\alpha_d)$, $\alpha_i\in\{0,1,...,d\}$, 
and $D_i^0$ is the identity operator for $i=1,...,d$. 
Similarly, the Sobolev space $H^m(l_2)$ of $l_2$-valued functions are defined on $\bR^d$ 
as the closure of the of $l_2$-valued smooth functions 
$\varphi=(\varphi_i)_{i=1}^{\infty}$ on $\bR^d$ with compact support, in the norm 
denoted also by $|\varphi|_m$ and defined as in \eqref{H} with  
$
\sum_{i=1}^{\infty}| D^{\alpha}\varphi_i(x)|^2
$
in place of $|D^{\alpha}\varphi(x)|^2$. 
Unless stated otherwise, throughout the paper we use the summation convention with respect to repeated 
indices. 
The summation over an empty set means 0. 
We denote by $C$ and $N$ constants which may change from
one line to the next, and by $C(a)$ and $N(a)$ constants depending 
on a parameter $a$.

For theorems and notations 
in the $L_2$-theory of stochastic PDEs the reader is referred to \cite{KrRo} 
or \cite{R}. 

\section{Framework and some notations}                                            \label{framework} 
Let $(\Omega,\mathcal F,P,({\mathcal F}_t)_{t\geq0})$ be a 
complete  filtered probability space carrying  
 a sequence of independent 
Wiener martingales $W=(W^{\rho})_{\rho=1}^{\infty}$ 
with respect to a filtration $({\mathcal F}_t)_{t\geq0}$. 

We consider the stochastic PDE problem 
\begin{equation}                                                                     \label{SPDE1}                                                          
 d u_t(x) =\big[ {\mathcal L}_t u_t(x) + f_t(x)\big] dt 
 + \big[ {\mathcal M}_t^{\rho}u_t(x) + g^{\rho}_t(x) \big] dW_t^{\rho} ,\quad 
 (t,x)\in[0,T]\times\mathbb R^d, 
 \end{equation}
 with initial condition
 \begin{equation}                                                                       \label{ini}                               
 u_0(x)=\phi(x),\quad       x\in\mathbb R^d,                                                   
\end{equation}
for a given $\phi\in H^0=L_2(\mathbb R^d)$, where  
\begin{eqnarray*}
{\mathcal L}_t u (x) &=&D_{i}( a^{ij}_t(x)D_{j} u(x))+b^i_t(x)D_i u(x)+c_t(x)u(x) , \\
{\mathcal M}^{\rho}_t u(x) &=& \sigma^{i\rho}_t(x)D_{i} u(x) +\nu^{\rho}_t(x)u(x) \quad 
\text{for } u\in H^1=W^1_2(\mathbb R^d), 
\end{eqnarray*}
with $\mathcal P\otimes{\mathcal B}(\mathbb R^d)$-measurable 
real-valued bounded functions $a^{ij}$, $b^i$, $c$, and $l_2$-valued 
bounded functions $\sigma^{i}=(\sigma^{i\rho})_{\rho=1}^{\infty}$ and 
$\nu=(\nu^{\rho})_{\rho=1}^{\infty}$
defined on $\Omega\times[0,T]\times\mathbb R^d$ for $i,j\in\{1, ..., d\}$. 
Furthermore, $a^{ij}_t(x)=a^{j i}_t(x)$ a.s. for every $(t,x)\in [0,T]\times \RR^d$. 
For $i=1,2, ..., d$ the notation $D_i = \frac{\partial}{\partial x_i}$ means 
the partial derivative in the $i$-th coordinate direction.   

The free terms $f$ and $g=(g^{\rho})_{\rho=1}^{\infty}$ 
are $\mathcal P\otimes{\mathcal B}(\mathbb R^d)$-measurable 
functions on $\Omega\times[0,T]\times\mathbb R^d$, 
with values in $\mathbb R$ and $l_2$ respectively. 
Let  $H^{m}(l_2)$ denote the $H^m$ space of $l_2$-valued 
functions on $\mathbb R^d$. 
We use the notation $|\varphi|_m$ for the $H^m$-norm of $\varphi\in H^m$ and 
of $\varphi\in H^m(l_2)$, 
and  $|\varphi|_0$  denotes the $L_2$-norm of $\varphi\in H^0=L_2$.

Let $m\geq0$ be an integer, $K\geq0$ be a constant 
and make the following assumptions.

\begin{assumption}                                                                        \label{assumption coeff}
{\it The derivatives in $x\in\bR^d$ up to order $m$ 
of the coefficients $a^{ij}$, $b^i$, $c$, and of the coefficients 
$\sigma^{i}$, $\nu$ are 
$\mathcal P\otimes{\mathcal B}(\mathbb R)$-measurable 
functions  with values in $\mathbb R$ and in $l_2$-respectively.
For almost every $\omega$ they are continuous   in $x$,
and they are bounded in magnitude by $K$. }
\end{assumption}

\begin{assumption}                                                                 \label{assumption data}
{\it The function $\phi$ is an 
$H^m$-valued ${\mathcal F}_0$-measurable  random variable, and 
 $f$ and $g=(g^{\rho})_{\rho=1}^{\infty}$ 
are predictable processes with values in $H^{m}$ and $H^m(l_2)$, 
respectively, such that 
 \begin{equation} 		\label{Km}
{\mathcal K}^2_m:=|\phi|^2_m+
\int_0^T \big( |f_t|_{m}^2+|g_t|^2_{m} \big) \,dt  < \infty \,\,(a.s.).  
\end{equation}   }   
\end{assumption}

\begin{assumption}                                                           \label{assumption parab}
{\it There exists a constant $\kappa>0$, 
such that for $(\omega,t,x)\in \Omega \times [0,T] \times \RR^d$ 
\begin{equation}                                                          \label{2.19.5.12}
\sum_{i,j=1}^d \big( a^{ij}_t(x)
 -\tfrac{1}{2}\sum_\rho \sigma^{i\rho}_t(x)\sigma^{j\rho}_t(x) \big) z^i z^j  
 \geq\kappa |z|^2 
 \quad\text{for all $z=(z^1,...,z^d)\in\mathbb R$}.
\end{equation}
}
\end{assumption}  

For integers $n\geq0$ let $\bW^n_2(0,T)$ denote the space of 
$H^n$-valued predictable processes $(u_t)_{t\in[0,T]}$ such that 
almost surely 
$$
\int_0^T|u_t|^2_{n}\,dt<\infty. 
$$
\begin{definition} 
A continuous $L_2$-valued adapted process $(u_t)_{t\in[0,T]}$ is a generalised solution 
to \eqref{SPDE1}-\eqref{ini} if it is in $\bW^1_2(0,T)$, and almost surely 
\begin{align*}
(u_t,\varphi)=& (\phi,\varphi)
+\int_0^t \big( a^{ij}_sD_{j}u_s,D^{\ast}_{i}\varphi)+(b^i_sD_iu_s+c_su_s+f_s,\varphi \big)\,ds \\
& +\int_0^t \big( \sigma_s^{i\rho} D^i u_s + \nu^\rho_s u_s +  g^{\rho}_s,\varphi \big)\,dW^{\rho}_s 
\end{align*}
for all $t\in[0,T]$ and $\varphi\in C_0^{\infty}$,  where $D^{\ast}_{i}:=-D_{i}$  
 for $i\in\{1,2,...,d\}$, and $(,)$ denotes the inner product in $L_2$.  
 \end{definition}

 For $m\geq 0$ set 
 \begin{equation}		\label{gothic_Km}
  \mathfrak{K}_m = |\phi|_m^2 + \int_0^T   \big( |f_t|_{m-1}^2 + |g_t|_m^2\big) dt.
 \end{equation}
Then the following theorem is well-known (see, e.g., \cite{R}).
\begin{theorem}                                                               \label{theorem SPDE}                 
Let Assumptions \ref{assumption coeff}, 
\ref{assumption data} and \ref{assumption parab} hold. 
Then \eqref{SPDE1}-\eqref{ini} has a unique 
generalised solution $u=(u_t)_{t\in[0,T]}$. Moreover, $u\in \bW^{m+1}_2(0,T)$, 
it is an $H^{m}$-valued continuous process, and 
$$
E\sup_{t\in[0,T]}|u_t|^2_{m}+E\int_0^T|u_t|_{m+1}^2\,dt
\leq C   E \mathfrak{K}_m,  
$$  
where $C$ is a constant depending only on $\kappa$, $d$, $T$, $m$ and $K$.  
\end{theorem}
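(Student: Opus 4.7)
The plan is to invoke the standard $L_2$-variational theory of linear stochastic evolution equations and then bootstrap to higher Sobolev regularity by differentiating \eqref{SPDE1} in the spatial variable and applying the base-case estimate to the resulting hierarchy. Concretely, I would follow the framework of \cite{R} (or \cite{KrRo}) and view \eqref{SPDE1} as an evolution equation on the Gelfand triple $H^1\hookrightarrow H^0\hookrightarrow H^{-1}$, with $\mathcal{L}_t:H^1\to H^{-1}$ and $\mathcal{M}_t^{\rho}:H^1\to H^0$ bounded linear operators whose norms are controlled by Assumption \ref{assumption coeff}.

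For the base case $m=0$, the key step is the stochastic coercivity inequality
$$
2\langle \mathcal{L}_t v,v\rangle + \sum_\rho|\mathcal{M}_t^{\rho} v|_0^2 \leq -\kappa|Dv|_0^2 + C|v|_0^2, \qquad v\in H^1,
$$
which I would obtain by integrating by parts in the divergence term (using $a^{ij}=a^{ji}$), invoking Assumption \ref{assumption parab} on the matrix $(a_t^{ij}-\tfrac{1}{2}\sum_{\rho}\sigma_t^{i\rho}\sigma_t^{j\rho})$, and absorbing the first-order cross terms by Cauchy-Schwarz at the cost of enlarging the $|v|_0^2$ coefficient. Combined with the linear bound $|\mathcal{L}_t v|_{-1}+|\mathcal{M}_t v|_0\leq C|v|_1$ and hemicontinuity, this places us in the Pardoux-Krylov-Rozovskii setting and yields a unique generalised solution in $\bW^1_2(0,T)\cap C([0,T];H^0)$. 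Applying the Itô formula for the square norm to $|u_t|_0^2$, taking expectations and using Gronwall's lemma, and then invoking the Burkholder-Davis-Gundy inequality to control the supremum of the martingale part, produces the stated estimate with $m=0$.

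For $m\geq 1$ I would argue by induction on $|\alpha|$. For each multi-index $\alpha$ with $|\alpha|\leq m$, the derivative $v^\alpha:=D^\alpha u$ formally satisfies an SPDE of the same form as \eqref{SPDE1} with the same operators $\mathcal{L}_t,\mathcal{M}_t^{\rho}$, but with modified free terms $\tilde f^\alpha = D^\alpha f + [D^\alpha,\mathcal{L}_t]u$ and $\tilde g^{\alpha,\rho}=D^\alpha g^\rho + [D^\alpha,\mathcal{M}_t^{\rho}]u$. By Assumption \ref{assumption coeff} the commutators involve only derivatives of $u$ of order $\leq|\alpha|$ multiplied by $L_\infty$-bounded coefficients, so
$$
|\tilde f^\alpha|_{-1}\leq |D^\alpha f|_{-1} + C|u|_{|\alpha|},\qquad
|\tilde g^\alpha|_0\leq |D^\alpha g|_0 + C|u|_{|\alpha|}.
$$
Applying the $m=0$ energy estimate to the equation for $v^\alpha$, summing over $|\alpha|\leq m$, and closing with Gronwall's lemma produces the claim with right-hand side $E\mathfrak{K}_m$. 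The asymmetric appearance of $|f_t|_{m-1}$ versus $|g_t|_m$ in \eqref{gothic_Km} reflects the fact that the Itô formula pairs $f$ with a test function in $H^1$ (costing one derivative), while $g$ enters quadratically through the martingale bracket.

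The main technical obstacle is justifying the formal differentiation, since a priori $u$ only has $\bW^1_2(0,T)$ regularity and $D^\alpha u$ is not admissible in the Itô formula at the level required. The standard remedy is to mollify the data $(\phi,f,g)$ in $x$ to obtain smooth solutions $u^{(\ee)}$ to which the differentiated equation and the energy estimate apply verbatim; the resulting bounds are uniform in $\ee$, and passing to the weak limit in $\bW^{m+1}_2(0,T)$ together with the uniqueness from the base case transfers the regularity and the estimate to the original generalised solution $u$. Equivalently, one may carry out the induction directly in the shifted Gelfand triple $H^{m+1}\hookrightarrow H^m\hookrightarrow H^{m-1}$, verifying coercivity and boundedness in the shifted norms, which is the content of the commutator bounds above.
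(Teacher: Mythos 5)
The paper offers no proof of this theorem: it is quoted as a well-known result with a reference to \cite{R} (see also \cite{KrRo}), so there is no internal argument to compare against. Your sketch --- variational solvability on the Gelfand triple via stochastic coercivity from Assumption \ref{assumption parab}, It\^o's formula plus Gronwall and Burkholder--Davis--Gundy for the $m=0$ estimate, then induction over $D^\alpha u$ with commutator bounds and a mollification step to justify the formal differentiation --- is precisely the standard argument in those cited references, and it correctly accounts for the $|f_t|_{m-1}$ versus $|g_t|_m$ asymmetry in $\mathfrak{K}_m$, so it is the right proof of the statement even though the paper itself does not supply one.
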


The finite elements we consider in this paper 
are determined by a continuous real function $\psi\in H^1$ with compact 
support, and a finite set $\Lambda\subset \bQ^d$, containing the zero vector, 
such that $\psi$ and $\Lambda$ are symmetric, 
i.e., 
\begin{equation} \label{sym_psi_Lambda}
\psi(-x)=\psi(x)\; \mbox{\rm for all } \; x\in\bR^d, \quad \mbox{\rm  and} \; 
\Lambda=-\Lambda.
\end{equation} 
We assume that $|\psi|_{L_1}=1$, which can be achieved by scaling. 
For each $h\neq 0$ and $x\in\bR^d$ we set $\psi^h_x(\cdot):=\psi((\cdot-x)/h)$, 
and our set of finite 
elements is  the collection of functions $\{\psi^{h}_x:x\in\bG_h\}$, where 
$$
\bG_h:=\left\{h\sum_{i=1}^nn_i\lambda_i:\lambda_i\in\Lambda, \,n_i,n\in\bN\right\}.  
$$
Let $V_h$ denote the vector space  
$$
V_h
:=\left\{\sum_{x\in\bG_h}U(x) \psi^{h}_{x}: (U(x))_{x\in{\bG_h}}\in \ell_2(\bG_h)\right\}, 
$$
where $\ell_2(\bG_h)$ is the space of functions $U$ on $\bG_h$ such that 
\begin{equation}			\label{U_0h}
|U|_{0,h}^2 :=|h|^d  \sum_{x\in\bG_h}  U^2(x)  <\infty.
\end{equation}

\begin{definition}
 An $L_2(\mathbb R^d)$-valued 
 continuous process $u^h=(u^h_t)_{t\in[0,T]}$ 
 is a finite elements approximation of $u$ 
 if it takes values in $V_h$ 
 and almost surely 
 \begin{align}                                                          \label{finelem}
 (u^h_t,\psi^{h}_{x})=&(\phi,\psi^{h}_{x})
 +\int_0^t
 \big[ (a^{ij}_sD_{j}u^{h}_s,D^{\ast}_{i}\psi^{h}_{x})
 +(b^i_sD_iu^h_s+c_su^h_s+f_s,\psi^{h}_x)\big] \,ds                                                                    \nonumber\\
 &
 +\int_0^t
 (\sigma^{i\rho}_sD_{i}u^h_s+\nu^{\rho}_su^h_s+g^{\rho}_s,\psi^{h}_x)\,dW^{\rho}_s, 
 \end{align}
 for all $t\in[0,T]$ and $\psi_x^h$ is as above for
 $x\in \bG_h$.  
 The process $u^h$ is also called a {\it $V_h$-solution} 
  to \eqref{finelem} on $[0,T]$. 
\end{definition}

Since by definition a $V_h$-valued solution $(u^h_t)_{t\in[0,T]}$ to \eqref{finelem} 
is of the form 
$$
u^h_t(x)=\sum_{y\in\bG_h}U^h_t(y)\psi^h_y(x), \quad x\in\bR^d, 
$$
we need to solve \eqref{finelem} 
for the random field $\{U^h_t(y):y\in\bG_{h}, t\in[0,T]\}$. 
Remark that \eqref{finelem} is an infinite system of stochastic equations. In practice one 
should ``truncate" this system to solve numerically a suitable finite system instead,  
and one should also estimate the error caused by the truncation.  We will study 
such a procedure and the corresponding error elsewhere.

Our aim in this paper is to show that for some well-chosen  functions $\psi$,  
the above finite elements scheme has a unique 
solution $u^h$ for every $h\neq 0$, and that 
 for a given 
integer $k\geq0$  there exist random fields 
$v^{(0)}$, $v^{(1)}$,...,$v^{(k)}$ and $r_k$, on $[0,T]\times\bG_h$, 
such that 
almost surely
\begin{equation}                                               \label{expansion}
U^h_t(x)=v^{(0)}_t(x)+\sum_{1\leq j\leq k}v^{(j)}_t(x)\frac{h^j}{j!}+ 
r^{h}_t(x), 
\quad\text{$t\in[0,T]$,  $x\in\mathbb G_h$,}
\end{equation}
where $v^{(0)}$,..., $v^{(k)}$ do not depend on $h$, and 
there is a constant $N$, independent of $h$, such that 
\begin{equation}                                                 \label{remainder}
E\sup_{t\leq T} |h|^d \sum_{x\in\mathbb G_h}|r^h_t(x)|^2
\leq N |h|^{2(k+1)} E {\mathfrak K}_m^2 
\end{equation}
for all $|h|\in (0,1]$  and some $m> \frac{d}{2}$. 

To write  \eqref{finelem} more explicitly as an equation for 
$(U^h_t(y))_{y\in\bG_h}$, we introduce the following notation: 
\begin{align}                  \label{defR}
&R^{\alpha\beta}_{\lambda}=(D_{\beta}\psi_{\lambda},D^{\ast}_{\alpha}\psi), 
\quad \alpha,\beta\in\{0,1,...,d\}, \nonumber \\
&R^{\beta}_{\lambda}=R^{0\beta}_{\lambda}:=(D_{\beta}\psi_{\lambda},\psi) , \quad
R_{\lambda}:=R^{00}_{\lambda}:=(\psi_{\lambda},\psi),
\quad\lambda\in\bG, 
\end{align}
where $\psi_{\lambda}:=\psi^1_{\lambda}$, 
and $\bG:=\bG_1$. 

\begin{lemma}    \label{lemR}
For $\alpha, \beta\in \{1, ..., d\}$ and $\lambda\in \bG$ we have:
\[ R^{\alpha\beta}_{-\lambda} =R^{\alpha\beta}_{\lambda}, \quad R^\beta_{-\lambda}=-R^\beta_{\lambda}, \quad R_{-\lambda}=R_{\lambda}.\] 
\end{lemma}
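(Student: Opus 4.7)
The plan is to derive all three identities from a single observation about parity, followed by the change of variables $y=-x$ in the defining integrals. The assumption \eqref{sym_psi_Lambda} says $\psi$ is even, i.e.\ $\psi(-z)=\psi(z)$ for all $z\in\bR^d$. Differentiating this identity gives that each partial derivative $D_i\psi$ is \emph{odd}: $(D_i\psi)(-z)=-(D_i\psi)(z)$ for $i\in\{1,\dots,d\}$. Since the translates satisfy $\psi_{-\lambda}(x)=\psi(x+\lambda)$ and $(D_\beta\psi_{-\lambda})(x)=(D_\beta\psi)(x+\lambda)$, the substitution $y=-x$ (which has unit Jacobian) turns an integrand of the form $(D_\beta\psi)(x+\lambda)\,(D_\alpha\psi)(x)$ into $(D_\beta\psi)(\lambda-y)\,(D_\alpha\psi)(-y)$, and each factor can be brought to the form appearing in $R^{\alpha\beta}_\lambda$ at the cost of one sign for each derivative taken.

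Carrying this out explicitly: for $R_{-\lambda}=\int_{\bR^d}\psi(x+\lambda)\psi(x)\,dx$, the substitution $y=-x$ combined with the evenness of $\psi$ gives $\int\psi(-y+\lambda)\psi(-y)\,dy=\int\psi(y-\lambda)\psi(y)\,dy=R_\lambda$. For $R^\beta_{-\lambda}=\int (D_\beta\psi)(x+\lambda)\psi(x)\,dx$, the same substitution yields $\int(D_\beta\psi)(\lambda-y)\psi(-y)\,dy=\int\bigl(-(D_\beta\psi)(y-\lambda)\bigr)\psi(y)\,dy=-R^\beta_\lambda$, the single sign flip coming from the oddness of $D_\beta\psi$. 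Finally, for $R^{\alpha\beta}_{-\lambda}=-\int (D_\beta\psi)(x+\lambda)(D_\alpha\psi)(x)\,dx$, two sign flips arise (one from $D_\beta\psi$, one from $D_\alpha\psi$) and cancel, leaving $R^{\alpha\beta}_{-\lambda}=R^{\alpha\beta}_\lambda$.

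There is no real obstacle here; the lemma is a bookkeeping exercise in parity. The only point to mind is that the number of $x$-derivatives hitting $\psi$ in the integrand determines the sign: an even total produces invariance under $\lambda\mapsto -\lambda$, and an odd total produces a sign change. This is exactly why the symmetry assumption \eqref{sym_psi_Lambda} is imposed, and it will later allow the odd-order terms in the formal Taylor expansion of the finite element operator in $h$ to vanish, which is the mechanism behind expansion \eqref{expansion} with only $\lfloor J/2\rfloor$ terms needed to reach order $J+1$.
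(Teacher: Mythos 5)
Your proof is correct and follows essentially the same route as the paper: the evenness of $\psi$ from \eqref{sym_psi_Lambda} makes each $D_i\psi$ odd, and the substitution $z\mapsto -z$ in each defining integral then yields exactly the sign determined by the parity of the number of derivatives in the integrand. The paper's own argument is the same change-of-variables computation carried out term by term.
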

\begin{proof}
Since $\psi(-x)=\psi(x)$ we deduce that for any $\alpha\in \{1, ..., d\}$ 
we have $D_\alpha \psi(-x)= - D_\alpha \psi(x)$. 
Hence for any $\alpha, \beta\in \{1, ..., d\}$ and $\lambda\in \bG$, 
a change of variables yields
\begin{align*}
R_{-\lambda}^{\alpha\beta}&=\! \int_{\RR^d} \! 
D_\beta \psi (z+\lambda) D^{\ast}_\alpha \psi(z) dz 
=\!  \int_{\RR^d}\! D_\beta\psi(-z+\lambda)D^{\ast}_\alpha \psi(-z) dz \\
&= \!\int_{\RR^d}\! D_\beta\psi (z-\lambda) D^{\ast}_\alpha \psi(z) dz 
=R^{\alpha\beta}_\lambda, \\
R^\beta_{-\lambda}&= \! \int_{\RR^d} \! D_\beta \psi (-z+\lambda) \psi(-z) dz 
= -\! \int_{\RR^d} \! D_\beta \psi (z-\lambda)  \psi(z) dz = -R^\beta_\lambda,\\
R_{-\lambda} &= \! \int_{\RR^d} \! \psi (-z+\lambda)  \psi(-z) dz 
= \! \int_{\RR^d} \!  \psi (z-\lambda)  \psi(z) dz =R_\lambda;
\end{align*}
this concludes the proof. 
\end{proof}

To prove the existence of a unique $V_h$-valued solution to \eqref{finelem},  
and a suitable estimate for it, we need the following condition. 
\begin{assumption}                                           \label{assumption invertibility}
{\it There is a constant $\delta>0$ such that 
$$
\sum_{\lambda,\mu\in\bG}R_{\lambda-\mu}
z^{\lambda}z^{\mu}\geq \delta\sum_{\lambda\in\bG}|z^{\lambda}|^2,  \quad \mbox{\rm for all }  (z^{\lambda})_{\lambda\in \bG}\in\ell_2(\bG).
$$
}
\end{assumption}

\begin{remark}                                                             \label{remark_Rlambda}
{\em  Note that since $\psi\in H^1$ has compact support, 
there exists a constant $M$ such that 
\[ |R_\lambda^{\alpha,\beta}|\leq M \quad 
\mbox{\rm for } \alpha, \beta \in \{0, ..., d\} \; \mbox{\rm and }\, \lambda \in \bG.\] }
 \end{remark} 

\begin{remark}                                 \label{remark norms}
{\em Due to Assumption \ref{assumption invertibility} for $h\neq 0$, 
$u:=\sum_{y\in\bG_h}U(y)\psi^h_y$, 
$U=(U(y))_{y\in\bG_h}\in \ell_2(\bG_h)$ we 
have 
\begin{align}                  \label{eqdelta}
|u|_0^2=&\sum_{x,y\in\bG_h}U(x)U(y)(\psi^h_x,\psi^h_y)  \nonumber\\
=&\sum_{x,y\in\bG_h}R_{(x-y)/h}U(x)U(y)  |h|^d 
\geq\delta\sum_{x\in\bG_h}U^2(x)|h|^d 
=\delta  |U|^2_{0,h}  .
\end{align}
Clearly, since $\psi$ has compact support, only finitely many $\lambda \in \bG$ 
are such that $(\psi_\lambda,\psi)\neq 0$; hence 
$$
|u|_0^2 \leq\sum_{x,y\in\bG_h} |R_{(x-y)/h}|\, |U(x)U(y)| |h|^d 
 \leq N |h|^d  \sum_{x\in\bG_h}U^2(x)=N |U|^2_{0,h} , 
$$
where $N$ is a constant depending only on $\psi$. }
\end{remark}

By virtue of this remark 
for each $h\neq 0$  the linear mapping ${\bf\Phi}_h$ from $\ell_2(\bG_h)$ to 
$V_h\subset L_2(\RR^d)$, 
defined by 
$$
{\bf\Phi}_hU:=\sum_{x\in\bG_h}U(x)\psi^h_x
\quad 
\text{for $U=(U(x))_{x\in\bG_h}\in\ell_2(\bG_h)$}, 
$$
is a one-to-one linear operator such that 
the norms of $U$ and ${\bf\Phi}_hU$ are equivalent, with constants 
independent of $h$.  
In particular, $V_h$ is a closed subspace of $L_2(\RR^d)$. Moreover, since 
$D_i\psi$ has compact support,  \eqref{eqdelta} implies that 
$$
|D_i u|_0 \leq \frac{N}{|h|}\|u\|
\quad \text{for all $u\in V_h$, 
\quad $i\in\{1,2,...,d\}$}, 
$$
where $N$ is a constant depending only on $D_i\psi$ and $\delta$.  Hence for any $h>0$ 
\begin{equation}                                              \label{H1}
|u|_{1}\leq N(1+|h|^{-1})|u|_0 \quad \text{for all  $u\in V_h$}
\end{equation}
with a constant $N=N(\psi,d,\delta)$ which does not depend on $h$. 
\begin{theorem}                                        \label{theorem eu}                                                 
Let Assumptions \ref{assumption coeff}  
through \ref{assumption invertibility} hold  with $m=0$. 
Then for each $h\neq 0$ 
equation \eqref{finelem} has a unique $V_h$-solution $u^h$. 
Moreover, there is a constant $N=N(d,K,\kappa)$ independent of $h$ such that 
\begin{align}                     \label{estimate1}
& E\sup_{t\in[0,T]}|u^h_t|_0^2+E\int_0^T|u^h_t|^2_{1}\,dt \nonumber \\                                            
&\qquad  \leq NE|\pi^h  \phi |_0^2+NE\int_0^T\big( |\pi^hf_s|_0^2
+\sum_{\rho}|\pi^h g^{\rho}_s|_0^2\big) \,ds\leq NE\cK_0^2
\end{align}
for all $h\neq 0$, where 
$\pi^h$ denotes the orthogonal projection  of $H^0=L_2$ 
into $V_h$. 
\end{theorem}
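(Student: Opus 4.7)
The plan is: (i) identify \eqref{finelem} with a linear $V_h$-valued SDE, (ii) obtain existence and uniqueness from standard Hilbert-space SDE theory, and (iii) derive the estimate via It\^o's formula combined with Assumption \ref{assumption parab}.

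\smallskip
\noindent\emph{Step (i).} Since $\{\psi^h_x\}_{x\in\bG_h}$ spans $V_h$, which is closed in $L_2(\bR^d)$ by Remark \ref{remark norms}, testing \eqref{finelem} against each $\psi^h_x$ is equivalent to testing against every $v\in V_h$. Using the $L_2$-orthogonal projection $\pi^h$ onto $V_h$, \eqref{finelem} takes the equivalent form
$$du^h_t=\pi^h[\cL_tu^h_t+f_t]\,dt+\pi^h[\cM^\rho_tu^h_t+g^\rho_t]\,dW^\rho_t,\qquad u^h_0=\pi^h\phi,$$
where $\pi^h[\cL_tu^h_t]\in V_h$ is defined by duality through $(\pi^h[\cL_tu^h_t],v)=-(a^{ij}_tD_ju^h_t,D_iv)+(b^i_tD_iu^h_t+c_tu^h_t,v)$ for $v\in V_h$, and analogously for $\pi^h[\cM^\rho_tu^h_t]$.

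\smallskip
\noindent\emph{Step (ii).} By the inverse inequality \eqref{H1}, for each fixed $h\ne 0$ the maps $u\mapsto\pi^h[\cL_tu]$ and $u\mapsto\pi^h[\cM^\rho_tu]$ are bounded linear operators on $V_h$ with norms uniformly bounded in $(\omega,t)$ (depending on $h$). Existence and uniqueness of a continuous $V_h$-valued strong solution then follow from classical theory of linear SDEs with bounded predictable coefficients in a separable Hilbert space (see \cite{KrRo,R}); equivalently, one solves a countable system of linear SDEs for the coordinates $(U^h_t(y))_{y\in\bG_h}\in\ell_2(\bG_h)$.

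\smallskip
\noindent\emph{Step (iii).} Apply It\^o's formula to $|u^h_t|_0^2$ in the variational framework. Since $u^h_t\in V_h$, projections drop from all drift pairings, yielding
\begin{align*}
|u^h_t|_0^2=|\pi^h\phi|_0^2+\int_0^t\Big[&-2(a^{ij}_sD_ju^h_s,D_iu^h_s)+2(b^i_sD_iu^h_s+c_su^h_s+f_s,u^h_s)\\
&+\sum_\rho\big|\pi^h(\sigma^{i\rho}_sD_iu^h_s+\nu^\rho_su^h_s+g^\rho_s)\big|_0^2\Big]\,ds+M_t,
\end{align*}
with $M$ a local martingale. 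Expanding the square in the It\^o correction and using that $\pi^h$ is an $L_2$-contraction, I keep $|\pi^hg^\rho_s|_0^2$ on the free-term side and bound the rest without $\pi^h$. Assumption \ref{assumption parab} then delivers the coercivity $-2(a^{ij}_sD_ju^h_s,D_iu^h_s)+\sum_\rho|\sigma^{i\rho}_sD_iu^h_s|_0^2\leq-2\kappa|Du^h_s|_0^2$; the remaining cross terms and lower-order contributions are absorbed by Young's inequality using the boundedness of $b,c,\nu,\sigma$ from Assumption \ref{assumption coeff}. The identity $(f_s,u^h_s)=(\pi^hf_s,u^h_s)$ produces the $|\pi^hf_s|_0^2$ contribution. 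Gronwall's lemma, together with Burkholder--Davis--Gundy applied to $M$ to pass to the supremum, yields the first inequality in \eqref{estimate1}; the second is immediate because $\pi^h$ is a contraction.

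\smallskip
The main technical point, and the only one requiring real care, is to obtain a constant $N$ in \eqref{estimate1} that is \emph{independent} of $h$, unlike the constant in \eqref{H1} which blows up as $h\to 0$. This works because the energy identity in step (iii) is carried out in the ambient $L_2$-pairing, where the only appearance of $\pi^h$ is in front of the stochastic integrand and acts in our favour as a contraction, whereas the inverse inequality \eqref{H1} is invoked only qualitatively in step (ii) to secure existence and uniqueness for fixed $h$.
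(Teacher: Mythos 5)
Your proposal is correct and follows essentially the same route as the paper: recast \eqref{finelem} as an affine linear SDE in the Hilbert space $V_h$ (your $\pi^h[\cL_t\cdot]$, $\pi^h[\cM^\rho_t\cdot]$ are exactly the paper's operators $\bA^h_s$, $\bB^{h\rho}_s$ defined via the bilinear forms and the $L_2$ inner product on $V_h$), invoke classical Hilbert-space SDE theory for existence and uniqueness using \eqref{H1} only qualitatively, and then derive \eqref{estimate1} from It\^o's formula, the coercivity of Assumption \ref{assumption parab}, Gronwall, and the Davis inequality, with the $h$-independence of $N$ coming from working in the ambient $L_2$ pairing where $\pi^h$ only appears as a contraction. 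Your tracking of $\pi^h f$ and $\pi^h g$ through the energy identity is, if anything, slightly more careful than the paper's own intermediate estimate \eqref{Ih}.
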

\begin{proof} We fix  $h\neq 0$ and define the bilinear forms 
$A^h$ and $B^{h\rho}$
by 
\begin{eqnarray*}
A^h_s(u,v)&:=&(a^{ij}_sD_{j}u,D^{\ast}_{i}v)
 +(b^i_sD_iu+c_su,v)\\
B^{h \rho}_s  (u,v)&:=&(\sigma^{i\rho}_sD_iu+ \nu_s^{\rho} u ,v)
 \end{eqnarray*}
 for all $u,v\in V_h$.  
 Using Assumption \ref{assumption coeff} with $m=0$, by virtue of 
 \eqref{H1} we have a constant $C=C(|h|,K,d,\delta, \psi)$, such that 
 \[ 
 A^h_s(u,v)\leq C|u|_0 |v|_0
 \quad
B^{h \rho}_s(u,v)\leq  C|u|_0 |v|_0\quad \text{for all $u,v\in V_h$}.  
\]
Hence, identifying  $V_h$ with its  dual space  
$(V_h)^{\ast}$  by the 
 help of the $L_2(\RR^d)$ inner product in $V_h$, we can see there exist 
 bounded linear operators 
 $\bA^h_s$ and $\bB^{h\rho}_s$  on $V_h$ such that 
 $$
A_s^h(u,v)=(\bA^h_su,v),\quad  B^{h\rho}_s(u,v)=(\bB^{h\rho}_su,v)
\quad \text{for all $u,v\in V_h$},
$$
and for all $\omega\in\Omega$ and $t\in[0,T]$. 
Thus \eqref{finelem} can be rewritten as 
\begin{equation}
u^h_t=\pi^h\phi  +\int_0^t(\bA_s^hu^h_s+\pi^hf_s)\,ds
+\int_0^t(\bB_s^{h\rho}u^h_s+\pi^hg^{\rho}_s)\,dW_s^{\rho}, 
\end{equation}
which is an (affine) linear SDE in the Hilbert space $V_h$. Hence, by classical 
results on solvability of SDEs with Lipschitz continuous coefficients in Hilbert  
spaces we get a unique $V_h$-solution $u^h=(u^h_t)_{t\in[0,T]}$. 
To prove estimate \eqref{estimate1} we may assume $E\cK_0^2<\infty$. 
By applying  It\^o's formula to $|u^h|_0^2$ 
we obtain 
\begin{equation}                                                             \label{Ito}
 |u^h(t)|_0^2=|\pi^h\phi|_0^2+
 \int_0^t I^h_s\,ds+\int_0^tJ^{h,\rho}_s\,dW_s^{\rho}, 
\end{equation}
with 
\begin{align*}                                                                       
I^h_s:=&2(\bA_s^hu^h_s+\pi^hf_s,u^h_s)
+\sum_{\rho}|\bB_s^{h\rho}u^h_s+\pi^hg^{\rho}_s|_0^2 , \\                                           
J^{h\rho}_s:=&
 2(\bB_s^{h\rho}u^h_s+\pi^hg^{\rho}_s,u^{h}_s). 
 \end{align*}
 Owing to  Assumptions \ref{assumption coeff}, 
 \ref{assumption data} and \ref{assumption parab},  by 
 standard estimates we get
\begin{equation}                                                        \label{Ih}
 I^h_s\leq -\kappa |u^h(s)|^2_{1}+N\Big( |u^h_s|_0^2+|f_s|_0^2
 +\sum_{\rho}|g^{\rho}_s|_0^2 \Big)
\end{equation}
 with a constant $N=N(K,\kappa,d)$; thus from \eqref{Ito} using Gronwall's lemma 
 we obtain 
\begin{equation}                          \label{estimate0}
 E|u^h_t|_0^2+\kappa E\int_0^T|u^h_s|^2_{1}\,ds
 \leq NE\cK_0^2\quad t\in[0,T]
\end{equation}
 with a constant $N=N(T,K,\kappa,d)$. 
One  can estimate $E\sup_{t\leq T} |u^h_t|_0^2$ also in a standard way. 
Namely,  since 
$$
\sum_{\rho}|J^{h\rho}_s|^2
\leq N^2 \, \big(|u^h_s|_1^2+ |g_s|_0^2 \big) \,  \sup_{s\in[0,T]}|u^h_s|_0^2
$$
with a constant $N=N(K,d)$, by the Davis inequality we have 
\begin{align} 
 E\sup_{t\leq T}\Big|\int_0^tJ^h_s\,dW_s^{\rho}\Big| 
 &\leq \, 3E\Big(\int_0^T\sum_{\rho}|J^{h,\rho}_s|^2\,ds\Big)^{1/2}  \nonumber \\
 & \leq \, 3NE\Big(\sup_{s\in[0,T]}|u^h_s|_0^2\int_0^T\big( |u^h_s|^2_1+|  g_s |_0^2\big) \,ds\Big)^{1/2} \nonumber \\
                                   \label{estimate3}
&\leq  \, \frac{1}{2}E\sup_{s\in[0,T]}|u^h_s|_0^2
+5N^2E\int_0^T \big( |u_s^h  |^2_1+| g_s |_0^2\big) \,ds. 
\end{align}
Taking supremum in $t$ in both sides of \eqref{Ito} and then 
using \eqref{Ih}, \eqref{estimate0} and \eqref{estimate3},  
we obtain estimate \eqref{estimate1}. 
\end{proof}

\begin{remark}                                                       \label{rku-h}
An easy computation using the symmetry of $\psi$ imposed 
in \eqref{sym_psi_Lambda} shows that for every $x\in \RR^d$
and $h\neq 0$ we have $\psi^{-h}_x=\psi^h_x$. Hence the uniqueness of the solution to \eqref{finelem} proved in Theorem \ref{theorem eu}
implies that the processes $u^{-h}_t$ and $u^h_t$ agree for $t\in [0,T]$ a.s.
\end{remark} 

To prove rate of convergence results we introduce more conditions 
on $\psi$ and $\Lambda$.\smallskip

\noindent{\bf Notation.}  Let $\Gamma$ denote the set 
of vectors $\lambda$ in $\bG$ such that 
the intersection of the support of $\psi_{\lambda}:=\psi^1_{\lambda}$ 
and the support of $\psi$ has positive 
Lebesgue measure in $\bR^d$. Then $\Gamma$ is a finite set.

\begin{assumption}                                                             \label{compatibility}
{\em Let $R_\lambda$, $R^i_\lambda$ and $R^{ij}_\lambda$ 
be defined by \eqref{defR}; then for 
$i,j,k,l\in\{1,2,...,d\}$:
\begin{align}
&\sum_{\lambda\in\Gamma}R_{\lambda}=1, 
\quad  \sum_{\lambda\in\Gamma}R_{\lambda}^{ij}=0, \label{2.5.1}
\\
&\sum_{\lambda\in\Gamma}\lambda_k R^{i}_{\lambda}=\delta_{i,k}, 
\label{2.5.2}
\\
& \sum_{\lambda\in\Gamma}\lambda_k \lambda_l R^{ij}_{\lambda}
=\delta_{\{i,j\},\{k,l\}} \quad 
\mbox{\rm for } \, i\neq j,  \quad 
\sum_{\lambda\in\Gamma}\lambda_k\lambda_l R^{ii}_{\lambda}
= 2  \delta_{\{i,i\},\{k,l\}},  
 \label{2.5.3.1}\\
&\sum_{\lambda\in\Gamma}Q^{ij,kl}_{\lambda}=0 
\quad  \mbox{\rm and }\quad 
\sum_{\lambda\in\Gamma}\tilde{Q}^{i,k}_{\lambda}=0,
\label{2.5.4}
\end{align}
where 
\[
Q_\lambda^{ij,kl}:=\int_{\bR^d}z_k z_l D_j\psi_{\lambda}(z) D_i^{\ast}\psi(z)\,dz, \quad 
\tilde{Q}_\lambda^{i,k}:= \int_{\bR^d}z_k D_i\psi_{\lambda}(z) \psi(z) \,dz, 
\]
and for sets of indices $A$ and $B$ the notation $\delta_{A,B}$ means $1$ when $A=B$ 
and  
$0$ otherwise.  }
\end{assumption}                                                

Note that if Assumption \ref{compatibility} holds true, 
then for  any family of real numbers 
 $X_{ij,kl}, i,j,k,l\in \{1, ..., d\}$ such that $X_{ij,kl}=X_{ji,kl}$  we deduce from the identities \eqref{2.5.3.1} 
that 
 \begin{equation}    \label{2.5.3}
\frac{1}{2}\sum_{i,j=1}^d  \sum_{k,l=1}^d X_{ij,kl} 
\sum_{\lambda\in\Gamma}\lambda_k \lambda_l R^{ij}_{\lambda} =\sum_{i,j=1}^d X_{ij,ij}.
\end{equation}  

Our main result reads as follows. 

\begin{theorem}                                                \label{theorem main2}
Let $J\geq0$ be an integer. 
Let Assumptions \ref{assumption coeff} 
and \ref{assumption data}  hold with 
 $m>2J+\frac{d}{2}+2$. Assume also Assumption \ref{assumption parab} 
and Assumptions \ref{assumption invertibility} and \ref{compatibility} 
on $\psi$ and $\Lambda$.  
Then expansion \eqref{expansion} and estimate \eqref{remainder} hold 
with a constant $N=N(m, J,\kappa,K,d,\psi,\Lambda)$, where $v^{(0)}=u$ is the 
solution of  \eqref{SPDE1} with initial condition $\phi$ in \eqref{ini}. 
Moreover, in the expansion \eqref{expansion} we have $v^{(j)}_t=0$ for odd values of $j$.
\end{theorem}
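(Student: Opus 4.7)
The plan is to build the coefficients $v^{(0)},v^{(1)},\ldots,v^{(J)}$ in the expansion \eqref{expansion} constructively as solutions of a cascade of linear SPDEs, and then to control the remainder via the stability estimate of Theorem \ref{theorem eu}. I set $v^{(0)}:=u$, the generalised solution of \eqref{SPDE1}--\eqref{ini}, and for $1\le j\le J$ define $v^{(j)}$ as the unique generalised solution of
\begin{equation*}
dv^{(j)}_t=\bigl(\cL_t v^{(j)}_t+F^{(j)}_t\bigr)\,dt+\bigl(\cM^\rho_t v^{(j)}_t+G^{(j)\rho}_t\bigr)\,dW^\rho_t,\qquad v^{(j)}_0=0,
\end{equation*}
where $F^{(j)},G^{(j)\rho}$ are explicit linear combinations of derivatives of $v^{(0)},\ldots,v^{(j-1)}$ (of order at most $2(j-k)+2$ on $v^{(k)}$) with coefficients built from derivatives of $a^{ij},b^i,c,\sigma^{i\rho},\nu^\rho$ and from moments $\sum_{\lambda\in\Gamma}\lambda^\alpha R^{ij}_\lambda,\ \sum_\lambda\lambda^\alpha R^i_\lambda,\ \sum_\lambda\lambda^\alpha R_\lambda$ of $\psi$ on $\Lambda$. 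By Theorem \ref{theorem SPDE} together with the hypothesis $m>2J+d/2+2$, each $v^{(j)}$ is an $H^{m-2j}$-valued continuous process and, by Sobolev embedding, continuous in $x$, so its restriction to $\bG_h$ is well defined.

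The recipe for $F^{(j)},G^{(j)\rho}$ comes from a Taylor expansion in $h$ of the bilinear forms in \eqref{finelem}. At a fixed grid point $x\in\bG_h$, after testing with $\psi^h_x$, dividing by $|h|^d$, and changing variables $y=x+h\lambda$ with $\lambda\in\Gamma$, the drift contribution takes the form
\begin{equation*}
h^{-2}\sum_{\lambda\in\Gamma}U^h_s(x+h\lambda)\int a^{ij}_s(x+hz)\,D_j\psi_\lambda(z)\,D^\ast_i\psi(z)\,dz
\end{equation*}
plus analogous (less singular) terms for $b^i,c$, and similarly for the stochastic integrand. Inserting the ansatz $U^h_s(x+h\lambda)=\sum_{j\ge 0}h^j v^{(j)}_s(x+h\lambda)/j!$, Taylor-expanding both $a^{ij}_s(x+hz)$ and $v^{(j)}_s(x+h\lambda)$ in $h$ around $x$, and collecting powers of $h$, the singular $h^{-2}$ and $h^{-1}$ contributions cancel exactly by identities \eqref{2.5.1}--\eqref{2.5.3} of Assumption \ref{compatibility}, while the $h^0$ term reassembles into $(\cL_s v^{(0)}_s)(x)$. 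The stochastic part is treated the same way, with \eqref{2.5.1} and \eqref{2.5.2} producing $(\cM^\rho_s v^{(0)}_s)(x)$; matching powers $h^j$ for $j\ge 1$ then defines $F^{(j)},G^{(j)\rho}$ recursively. The vanishing $v^{(j)}=0$ for odd $j$ I would read off cleanly from Remark \ref{rku-h}: since $\psi(-x)=\psi(x)$ and $\Lambda=-\Lambda$, we have $\bG_h=\bG_{-h}$ and $U^h_t(x)=U^{-h}_t(x)$ on $\bG_h$ by uniqueness, so once the expansion is unique up to a remainder of order $|h|^{J+1}$, invariance under $h\mapsto -h$ forces all odd-order coefficients to vanish; alternatively, the parity statements of Lemma \ref{lemR} together with $\sum_\lambda\lambda^\alpha R^{ij}_\lambda=0$ for odd $|\alpha|$ (which follows from $\Lambda=-\Lambda$) show directly that $F^{(j)},G^{(j)\rho}$ vanish for odd $j$.

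To estimate $r^h_t(x):=U^h_t(x)-\sum_{j=0}^{J}h^j v^{(j)}_t(x)/j!$ on the grid, I identify it with $\tilde r^h_t\in V_h$ via the isomorphism $\mathbf\Phi_h$ of Remark \ref{remark norms}. Subtracting from \eqref{finelem} the equation that the partial sum would satisfy if the Taylor expansion above were exact, $\tilde r^h$ solves
\begin{equation*}
d\tilde r^h_t=\bigl(\mathbf A^h_t\tilde r^h_t+\pi^h\tilde f^h_t\bigr)\,dt+\bigl(\mathbf B^{h\rho}_t\tilde r^h_t+\pi^h\tilde g^{h,\rho}_t\bigr)\,dW^\rho_t,\qquad \tilde r^h_0=0,
\end{equation*}
with free terms $\tilde f^h,\tilde g^{h,\rho}$ that are the order $J+1$ Taylor remainders, bounded respectively in $H^0$ and $H^0(l_2)$ by $N|h|^{J+1}\cK_m$. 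Applying estimate \eqref{estimate1} of Theorem \ref{theorem eu} to this linear $V_h$-valued SDE, which has the same principal part and hence the same $\kappa$ and $K$, yields $E\sup_{t\le T}|\tilde r^h_t|_0^2\le N|h|^{2(J+1)}E\mathfrak K_m^2$, and the equivalence $|\cdot|_0\asymp|\cdot|_{0,h}$ of Remark \ref{remark norms} converts this into \eqref{remainder}.

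The main obstacle is the algebraic and regularity bookkeeping behind the second paragraph: one must verify that combining the Taylor coefficients of the random coefficients with those of the shifted $v^{(k)}$'s according to Assumption \ref{compatibility} and its higher-moment consequences produces a closed recursion in which $F^{(j)},G^{(j)\rho}$ depend only on $v^{(k)}$ for $k<j$, involve only finitely many derivatives, and have $H^{m-2j}$-regularity; simultaneously one must show that the pointwise Taylor remainders in $a^{ij}_s(x+hz)$, $\sigma^{i\rho}_s(x+hz)$ and $v^{(j)}_s(x+h\lambda)$, once integrated against $\psi$ and reassembled into $\tilde f^h,\tilde g^{h,\rho}$, admit an $L_2$-bound of the form $N|h|^{J+1}\cK_m$ without regularity loss. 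Once that algebraic framework is in place, the stability step via Theorem \ref{theorem eu} and the norm equivalence of Remark \ref{remark norms} finish the argument.
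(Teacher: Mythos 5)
Your overall strategy --- a cascade of linear SPDEs for the coefficients $v^{(j)}$, a stability estimate for the remainder equation, and the symmetry $U^{-h}=U^{h}$ to kill the odd-order terms --- is the same as the paper's. But there is a genuine gap: you have ignored the mass matrix. Because the basis $\{\psi^h_x\}$ is not orthogonal, testing \eqref{finelem} with $\psi^h_x$ and dividing by $|h|^d$ produces the equation \eqref{eqU}, whose left-hand side is $\cI^h U^h_t(x)=\sum_{\lambda\in\Gamma}R_\lambda U^h_t(x+h\lambda)$, not $U^h_t(x)$, and whose initial datum is $\phi^h(x)=\int\phi(x+hz)\psi(z)\,dz$, not $\phi(x)$. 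Expanding $\cI^h$ and $\phi^h$ in $h$ contributes (i) the operators $\cI^{(j)}$ acting on $dv^{(i-j)}_t$ in the recursion \eqref{SPDE system} (including on the martingale parts), and (ii) nonzero initial conditions for the even-order coefficients. Your choice $v^{(j)}_0=0$ for $j\ge 1$ is therefore wrong: already in $d=1$ with the hat function one has $\cI^h\varphi=\varphi+\tfrac{h^2}{6}\varphi''+O(h^4)$ and $\phi^h=\phi+\tfrac{h^2}{12}\phi''+O(h^4)$, so $U^h_0=(\cI^h)^{-1}\phi^h=\phi-\tfrac{h^2}{12}\phi''+O(h^4)$, and with your $v^{(2)}_0=0$ the remainder at $t=0$ is already of order $h^2$, violating \eqref{remainder} for every $J\ge 2$. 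The paper's $v^{(i)}_0=\phi^{(i)}$ with $\phi^{(i)}(x)=\int\partial_z^i\phi(x)\psi(z)\,dz$ (together with the $\cI^{(j)}$ terms in the system) is exactly what repairs this.

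Two smaller points. First, your regularity count ($v^{(j)}$ of class $H^{m-2j}$, with $F^{(j)}$ involving up to $2(j-k)+2$ derivatives of $v^{(k)}$) is more lossy than necessary; the correct bookkeeping (Lemma \ref{lemma hdiff}: $\cL^{(n)}$ is of order $n+2$, fed into Theorem \ref{theorem SPDE}) loses only one derivative per order, giving $v^{(j)}\in H^{m-j}$ as in Theorem \ref{theorem system} --- your weaker count happens not to be fatal under $m>2J+\tfrac d2+2$, but you should be aware the recursion closes only because of this sharper loss. Second, the remainder's free terms are grid functions, so bounding their $|\cdot|_{0,h}$-norms requires the embedding $|I\varphi|_{0,h}\le N|\varphi|_k$ for $k>d/2$ (Lemma \ref{lemma4.6}) applied to the Taylor remainders $\hat L^{(J)h}$, $\hat M^{(J)h,\rho}$, $\hat I^{(J)h}$, $\hat f^{(J)h}$; this is where the $\tfrac d2$ in the hypothesis on $m$ enters, and it is absent from your sketch. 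The stability step itself should be run through Theorem \ref{theorem equ}(ii) (general $\ell_2(\bG_h)$-valued free terms, handled via $(\cI^h)^{-1}$) rather than directly through Theorem \ref{theorem eu}, again because of the mass matrix.
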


Set 
$$
\bar u^h_t(x)=\sum_{j=0}^{\bar{J}} c_ju^{h/2^j}_t(x)
\quad t\in[0,T], \quad x\in\bG_h,  
$$
with $\bar{J}:= \lfloor \frac{J}{2} \rfloor$,  
$(c_0,..,c_{\bar{J}}) =(1,0...,0)V^{-1}$, where $V^{-1}$ is the inverse of 
the $(\bar{J}+1)\times(\bar{J}+1)$ Vandermonde matrix 
$$
V^{ij}=2^{-4(i-1)(j-1)},\quad i,j=1,2,...,\bar{J}+1. 
$$ 
We make also the following assumption. 
\begin{assumption}                             \label{assumption uU}
$$
\psi(0)=1\quad\text{and $\psi(\lambda)=0$ for $\lambda\in\bG\setminus\{0\}$.}
$$
\end{assumption}

\begin{corollary}                                        \label{corollary1}
{\it Let Assumption \ref{assumption uU} and the assumptions 
of Theorem \ref{theorem main2} hold. 
Then 
$$
E\sup_{t\in[0,T]}
\sum_{x\in\mathbb G_h}|u_t(x)-  \bar{u}^h_t(x)  |^2 |h|^{d} 
\leq 
|h|^{2J+2} N E  {\mathfrak K}_m^2 
$$
for $|h|\in (0,1]$, with a constant $N=N(m,K,\kappa,J,T,d,\psi,\Lambda)$   
independent of $h$, where 
$u$ is the solution of \eqref{SPDE1}-\eqref{ini}. }
\end{corollary}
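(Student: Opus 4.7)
The plan is to combine three ingredients: Assumption \ref{assumption uU} to identify $u^{h/2^j}_t$ at grid points with the coefficients $U^{h/2^j}_t$; the expansion \eqref{expansion} from Theorem \ref{theorem main2}; and the Vandermonde choice of weights $(c_j)$, which kills the intermediate terms in that expansion.

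First I note that for any $h' \neq 0$ and any $x, y \in \bG_{h'}$ one has $(x-y)/h' \in \bG$, since $\Lambda = -\Lambda$ closes $\bG$ under differences. Hence Assumption \ref{assumption uU} gives $\psi^{h'}_y(x) = \psi((x-y)/h') = \delta_{x,y}$, and so
$$u^{h'}_t(x) = \sum_{y \in \bG_{h'}} U^{h'}_t(y)\psi^{h'}_y(x) = U^{h'}_t(x), \qquad x \in \bG_{h'}.$$
Moreover $\bG_h \subset \bG_{h/2^j}$ for every integer $j \geq 0$, since $h\sum n_i \lambda_i = (h/2^j)\sum (2^j n_i)\lambda_i$, so this identity with $h' = h/2^j$ applies in particular for every $x \in \bG_h$.

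Next I would apply Theorem \ref{theorem main2} to each $u^{h/2^j}$, $j = 0, 1, \ldots, \bar{J}$. Since $v^{(i)} \equiv 0$ for odd $i$, only even-order terms survive, so at any $x \in \bG_h$,
$$u^{h/2^j}_t(x) = u_t(x) + \sum_{i=1}^{\bar{J}} \frac{v^{(2i)}_t(x)}{(2i)!}\Big(\frac{h}{2^j}\Big)^{2i} + r^{h/2^j}_t(x).$$
Forming $\bar{u}^h_t(x) = \sum_j c_j u^{h/2^j}_t(x)$ and using that the defining relation $(c_0, \ldots, c_{\bar{J}}) = (1, 0, \ldots, 0)V^{-1}$ makes the coefficient of $u_t$ equal to $1$ and the coefficients of $v^{(2)}_t, v^{(4)}_t, \ldots, v^{(2\bar{J})}_t$ all vanish, I obtain
$$\bar{u}^h_t(x) - u_t(x) = \sum_{j=0}^{\bar{J}} c_j\, r^{h/2^j}_t(x), \qquad x \in \bG_h.$$

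Finally, by the elementary inequality $\bigl|\sum_j c_j a_j\bigr|^2 \leq (\bar{J}+1)\sum_j c_j^2 a_j^2$, together with $\bG_h \subset \bG_{h/2^j}$ and $|h|^d = 2^{jd}|h/2^j|^d$,
$$|h|^d \sum_{x \in \bG_h} \bigl|\bar{u}^h_t(x) - u_t(x)\bigr|^2 \leq (\bar{J}+1)\sum_{j=0}^{\bar{J}} c_j^2 \cdot 2^{jd}\, |h/2^j|^d \sum_{x \in \bG_{h/2^j}} \bigl|r^{h/2^j}_t(x)\bigr|^2.$$
Taking $E\sup_{t \in [0,T]}$ and applying estimate \eqref{remainder} with step $h/2^j$ bounds each summand by $N \cdot 2^{j(d-2J-2)}\,|h|^{2J+2}\,E\mathfrak{K}_m^2$; since $0 \leq j \leq \bar{J}$, the prefactor $2^{j(d-2J-2)}$ is absorbed into a constant depending only on $d$ and $J$. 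The main technical point is precisely this mismatch of grids: the norm on the left lives on the coarsest grid $\bG_h$, while the remainder bound of Theorem \ref{theorem main2} is naturally phrased on the finer grid $\bG_{h/2^j}$; the inclusion $\bG_h \subset \bG_{h/2^j}$ and the rescaling $|h|^d \leftrightarrow |h/2^j|^d$ bridge the two at the cost of a harmless, $J$- and $d$-dependent constant.
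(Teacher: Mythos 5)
Your proof is correct and is precisely the argument the paper intends (the paper omits an explicit proof of this corollary): the identification $u^{h'}_t(x)=U^{h'}_t(x)$ on $\bG_{h'}$ via Assumption \ref{assumption uU}, the expansion of Theorem \ref{theorem main2} at each mesh $h/2^j$ with only even-order terms surviving, cancellation of the terms $v^{(2)},\dots,v^{(2\bar J)}$ by the Vandermonde weights, and the transfer of the remainder estimate \eqref{remainder} from $\bG_{h/2^j}$ back to $\bG_h$ through the inclusion $\bG_h\subset\bG_{h/2^j}$ and the rescaling $|h|^d=2^{jd}|h/2^j|^d$, at the cost of a constant depending only on $d$ and $J$. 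The one caveat is that the cancellation you invoke, $\sum_{j}c_j2^{-2ij}=0$ for $1\le i\le \bar J$, requires the Vandermonde matrix $V^{ij}=2^{-2(i-1)(j-1)}$ (nodes $4^{-j}$, matching the surviving powers $(h/2^j)^{2i}$) rather than the $V^{ij}=2^{-4(i-1)(j-1)}$ printed in the paper; this is a typo in the paper's definition of $\bar u^h$, not a gap in your reasoning.
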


\section{Preliminaries} 

Assumptions \ref{assumption coeff}, \ref{assumption data} 
and \ref{assumption invertibility} 
are assumed to hold throughout this section. 
Recall that  $|\cdot|_{0,h}$ denote the norm,  
and $(\cdot,\cdot)_{0,h}$ denote the inner product in $\ell_2(\bG_h)$, i.e.,
 $$
|\varphi_1|^2_{0,h}:=|h|^d \sum_{x\in\bG_h}  \varphi_1^2(x)\, ,
 \quad (\varphi_1,\varphi_2)_{0,h}:=|h|^d \sum_{x\in\bG_h}\varphi_1(x)\varphi_2(x)
$$
for functions $\varphi_1, \varphi_2 \in\ell_2(\bG_h)$.  

 Dividing by {$|h|^d$, it is easy to see  
 that  the equation \eqref{finelem}  for the finite elements 
approximation 
$$
u^h_t(y)=\sum_{x\in\bG_h}U^h_t(x)\psi_x(y), \quad t\in[0,T],\,y\in\bR^d, 
$$ 
can be rewritten  for $(U^h_t(x))_{x\in\bG_h}$ as 
\begin{align}                                                          \label{eqU}
\cI^hU^h_t(x)
= \, &\phi^{h}(x)+\int_0^t \big( \cL^h_sU^h_s(x)+f^h_s(x) \big)\,ds                   \nonumber\\
&+\int_0^t \big( \cM^{h,\rho}_sU^h_s(x)+g^{h,\rho}_s(x) \big)\,dW^{\rho}_s, 
\end{align}
$t\in[0,T],\,x\in\bG_h$, 
where
\begin{align}                                                                                       \label{smoothing}
\phi^h(x)&=\int_{\bR^d}\phi(x+hz)\psi(z)\,dz, 
\quad
f^h_t(x)=\int_{\bR^d}f_t(x+hz)\psi(z)\,dz  \nonumber\\
g^{h,\rho}_t(x)&=\int_{\bR^d}g^{\rho}_t(x+hz)\psi(z)\,dz, 
\end{align}
and for functions $\varphi$ on $\bR^d$ 
\begin{align}
\cI^h\varphi(x)&=\sum_{\lambda\in\Gamma}
R_{\lambda}\varphi(x+h\lambda), \label{eqIh} \\
\cL^h\varphi(x)
& =
\sum_{\lambda\in\Gamma} \Big[ \frac{1}{h^2}A_t^h(\lambda,x)     +
\frac{1}{h}B_t^h(\lambda,x) + C_t^h(\lambda,x) \Big] \varphi(x+h\lambda),   \label{eqLh}\\
\cM^{h,\rho}\varphi(x)
& =\sum_{\lambda\in\Gamma} \Big[ \frac{1}{h}
S^{h,\rho}_t(\lambda, x) +N^{h,\rho}_t(\lambda,x) \Big] \varphi(x+h\lambda) ,  \label{eqMh}
\end{align}
with 
\begin{align*}
A_t^h(\lambda,x)& =\int_{\bR^d}a^{ij}_t(x+hz)D_j\psi_{\lambda}(z)D_i^{\ast}\psi(z)\,dz, 
\\
B_t^h(\lambda,x)&=\int_{\bR^d}b^{i}_t(x+hz)D_i\psi_{\lambda}(z)\psi(z)\,dz, \quad 
C_t^h(\lambda,x)=\int_{\bR^d}c_t(x+hz)\psi_{\lambda}(z)\psi(z)\,dz, 
\\
S_t^{h,\rho}(\lambda,x)&=\int_{\bR^d}\sigma^{i\rho}_t(x+hz)D_i\psi_{\lambda}(z)\psi(z)\,dz, 
\quad
N_t^{h,\rho}(\lambda,x)=\int_{\bR^d}\nu^{\rho}_t(x+hz)\psi_{\lambda}(z)\psi(z)\,dz. 
\end{align*}

\begin{remark}				\label{rkU-h}
Notice that due to the symmetry of $\psi$ and $\Lambda$ 
required in \eqref{sym_psi_Lambda}, equation \eqref{eqU} is
invariant under the change of $h$ to $-h$.
\end{remark}

\begin{remark}                                              \label{remark uU}
{\it Recall the definition of $\Gamma$ 
introduced before Assumption \ref{compatibility}. Clearly
$$
R_{\lambda}=0, \,\,A^h_t(\lambda,x)=B^h_t(\lambda,x)
=C_t^h(\lambda,x)=S^{h,  \rho} _t(\lambda, x)=N^{h ,  \rho  }_t(\lambda,x)=0 
\quad \text{for $\lambda\in\bG\setminus\Gamma$}, 
$$ 
i.e., the definition of $\cI^h$, $\cL^h_t $ and $\cM^{h,\rho}_t $ does not change if 
the summation there is taken over 
$\lambda\in\bG$. Owing to Assumption \ref{assumption coeff} 
with $m=0$ and the bounds on $R^{\alpha\beta}_\lambda$, the  
operators  $\cL^h_t$ and $\cM^{h,\rho}_t$  are bounded linear operators 
on $\ell_2(\bG_h)$ such that for each $h\neq 0$ 
 and $t\in [0,T]$ 
$$
|\cL^h_t\varphi|_{0,h}\leq  N_h  |\varphi|_{0,h}, 
\quad 
\sum_{\rho}|\cM^{h,\rho}_t\varphi|^2_{0,h}\leq  N_h^2   |\varphi|^2_{0,h}
$$
for all $\varphi\in\ell_2(\bG_h)$, with a constant $  N_h  =N(|h|,K,d,\psi,\Lambda)$.  
One can similarly show that  
\begin{equation}                                                                       \label{I}
|\cI^h\varphi|_{0,h}\leq N|\varphi|_{0,h} \quad\text{for $\varphi\in\ell_2(\bG_h)$,}
\end{equation}
with a constant $N=N(K,d,\Lambda,\psi)$ independent of $h$. 
It is also easy to see that for every $\phi\in L_2$ and  $\phi^h$ 
defined as in \eqref{smoothing} we have 
$$
|\phi^h|_{0,h}\leq N|\phi|_{L_2}
$$
with a constant $N=N(d, \Lambda,\psi)$  which does not depend on $h$; therefore  
$$
|\phi^h|^2_{0,h}
+\int_0^T  \Big( |f_t^h|^2_{0,h}
+\sum_{\rho}|g_t^{h,\rho}|^2_{0,h}\Big)  \,dt\leq N^2\cK_0^2.
$$
}
\end{remark}
\begin{lemma}                                              \label{remark R}
 The inequality  \eqref{I} implies that the mapping $\cI^h$ is a bounded linear operator on $\ell_2(\bG_h)$. 
Owing to Assumption \ref{assumption invertibility} 
it has an inverse $(\cI^h)^{-1}$ on $\ell_2(\bG_h)$, 
and 
\begin{equation}                                      \label{inverse}
|(\cI^h)^{-1}\varphi|_{0,h}\leq\frac{1}{\delta} |\varphi|_{0,h}\quad\text{for $\varphi\in\ell_2(\bG_h)$. } 
\end{equation}
\end{lemma}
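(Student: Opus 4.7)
The first assertion (boundedness of $\cI^h$ on $\ell_2(\bG_h)$) is already contained in \eqref{I}, so the only substantive task is to establish invertibility and the norm bound \eqref{inverse} for $(\cI^h)^{-1}$. My plan is to prove that $\cI^h$ is self-adjoint and bounded below by $\delta$, from which the conclusion follows by standard Hilbert space arguments.

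For self-adjointness, I would perform the change of variable $y = x + h\lambda$ in \eqref{eqIh} to rewrite $\cI^h \varphi(x) = \sum_{y \in \bG_h} R_{(y-x)/h}\varphi(y)$, and then use $R_{-\lambda} = R_\lambda$ (proved in Lemma \ref{lemR}) to check that the matrix representation is symmetric. For coercivity, the key observation is that $\bG_h = h\bG$ by definition, so the map $\mu \mapsto h\mu$ is a bijection between $\bG$ and $\bG_h$. Given $U \in \ell_2(\bG_h)$, I would set $z^\mu := U(h\mu)$ for $\mu \in \bG$, whence $(z^\mu)_{\mu \in \bG} \in \ell_2(\bG)$ with $|h|^d\sum_{\mu\in\bG}(z^\mu)^2 = |U|_{0,h}^2$. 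Writing points of $\bG_h$ as $x = h\mu$ and $y = h\nu$, a direct computation gives
\[
(\cI^h U, U)_{0,h}
= |h|^d \sum_{\mu,\nu \in \bG} R_{\nu-\mu}\, z^\mu z^\nu
\geq \delta\, |h|^d \sum_{\mu \in \bG} (z^\mu)^2
= \delta\, |U|_{0,h}^2
\]
by Assumption \ref{assumption invertibility}.

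From here I would finish with routine Hilbert space reasoning: Cauchy--Schwarz applied to the coercivity inequality yields $|\cI^h U|_{0,h} \geq \delta\, |U|_{0,h}$, so $\cI^h$ is injective with closed range; self-adjointness then forces the range to equal $(\ker \cI^h)^\perp = \ell_2(\bG_h)$, so $\cI^h$ is a bijection. Setting $\varphi = \cI^h U$ in the lower bound immediately gives \eqref{inverse}. I do not anticipate any real obstacle here; the only point requiring care is correctly passing between $\bG$ and $\bG_h$ via the scaling $x = h\mu$ so that Assumption \ref{assumption invertibility}, stated for sequences indexed by $\bG$, applies in its verbatim form.
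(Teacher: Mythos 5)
Your proposal is correct and follows essentially the same route as the paper: the core step in both is the coercivity estimate $(\varphi,\cI^h\varphi)_{0,h}\geq\delta|\varphi|^2_{0,h}$, obtained by re-indexing $\bG_h$ through the bijection $\mu\mapsto h\mu$ with $\bG$ so that Assumption \ref{assumption invertibility} applies verbatim. The only difference is that you spell out the final functional-analytic step (self-adjointness via $R_{-\lambda}=R_\lambda$, closed range, surjectivity) which the paper compresses into the sentence ``together with \eqref{I}, this estimate implies that $\cI^h$ is invertible.''
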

\begin{proof}
For $\varphi \in \ell_2(\bG_h)$ and $h \neq 0$ we have
\begin{align*} 
(\varphi,\cI^h\varphi)_{0,h}=&\,|h|^d \sum_{x\in\bG_h}\varphi(x)\cI^h\varphi(x)
= |h|^d \sum_{x\in\bG_h}\sum_{\lambda\in\bG}
\varphi(x)(\psi_{\lambda},\psi)\varphi(x+h\lambda)\\
=& \, |h|^d \sum_{x\in\bG_h}\sum_{y-x\in\bG_h}
\varphi(x)(\psi_{\frac{y-x}{h}},\psi)\varphi(y)
= |h|^d \sum_{\lambda,\mu\in\bG}
\varphi(h\mu)R_{\lambda-\mu}\varphi(h\lambda)
\\
\geq &\,  \delta |h|^d \sum_{\lambda\in\bG}|\varphi(h\lambda)|^2=\delta|\varphi|^2_{0,h}. 
\end{align*} 
Together with \eqref{I}, this estimate implies that $\cI^h$ is invertible and that \eqref{inverse} holds. 
\end{proof}

 Remark \ref{remark uU} and Lemma \ref{remark R} imply that equation \eqref{eqU} is an 
(affine) linear SDE in the Hilbert space $\ell_2(\bG_h)$, and by well-known results 
on solvability of SDEs with Lipschitz continuous coefficients in Hilbert spaces, equation 
\eqref{eqU} has a unique $\ell_2(\bG_h)$-valued continuons solution $(U_t)_{t\in[0,T]}$, 
which we call an {\it $\ell_2$-solution} to \eqref{eqU}. 

Now we formulate the relationship between 
equations \eqref{finelem} and \eqref{eqU}. 
\begin{theorem}                                                    \label{theorem equ}
Let Assumption \ref{assumption invertibility} hold. 
Then the following statements are valid. 

(i) Let Assumptions \ref{assumption coeff} and \ref{assumption data} 
be satisfied with $m=0$, and 
\begin{equation}                               \label{uU}
u^h_t=\sum_{x\in\bG_h}U^h_t(x)\psi^h_x,\quad t\in[0,T]
\end{equation}
be the unique $V_h$-solution of \eqref{finelem};  then $(U^h_t)_{t\in[0,T]}$ 
is the unique $\ell_2$-solution of \eqref{eqU}. 

(ii) Let Assumption \ref{assumption coeff} 
hold with $m=0$. Let $\Phi$ be an 
$\ell_2(\bG_h)$-valued $\cF_0$-measurable random variable, and let 
$F=(F_t)_{t\in[0,T]}$ and $G^{\rho}=(G^{\rho}_t)_{[0,T]}$ be $\ell_2(\bG_h)$-valued 
adapted processes such that almost surely 
$$
\cK^2_{0,h}
:=|\Phi|^2_{0,h}
+\int_0^T  \Big(  |F_t|^2_{0,h}+\sum_{\rho}|G_t^{\rho}|^2_{0,h}  \Big)  \,dt<\infty. 
$$
Then equation \eqref{eqU} with $\Phi$, $F$ and $G^{\rho}$ in place of 
$\phi^h$, $f^h$ and $g^{\rho,h}$, respectively, has a unique 
$\ell_2(\bG_h)$-solution $U^h=(U^h_t)_{t\in[0,T]}$. Moreover, 
if Assumption \ref{assumption parab} also holds then 
\begin{equation}                                                      \label{estimate U}
E\sup_{t\in[0,T]}|U_t^h|^2_{0,h}\leq NE\cK^2_{0,h}
\end{equation}
with a constant   $N=N(K,d,\kappa,\delta,\Lambda,  \psi  )$ which does not depend on $h$. 
\end{theorem}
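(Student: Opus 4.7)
Proof plan for Theorem \ref{theorem equ}. Part (i) is the rigorous form of the derivation of \eqref{eqU} already sketched in the text preceding the statement. Starting from \eqref{finelem} with $u^h_t=\sum_{y\in\bG_h}U^h_t(y)\psi^h_y$, I would test against $\psi^h_x$ and divide by $|h|^d$. The identity $(\psi^h_y,\psi^h_x)=|h|^d R_{(y-x)/h}$ turns the left-hand side into $\cI^h U^h_t(x)$ after the reindexing $\lambda=(y-x)/h$. The change of variable $z=(\cdot-x)/h$ inside each of the coefficient pairings on the right-hand side of \eqref{finelem} produces exactly the operators $\cL^h_s$, $\cM^{h,\rho}_s$ of \eqref{eqLh}--\eqref{eqMh}, with the factors $A^h_s(\lambda,x)$, $B^h_s(\lambda,x)$, $C^h_s(\lambda,x)$, $S^{h,\rho}_s(\lambda,x)$, $N^{h,\rho}_s(\lambda,x)$ emerging from the integrals against $a^{ij}$, $b^i$, $c$, $\sigma^{i\rho}$, $\nu^\rho$ respectively; the same substitution turns the free-data pairings into $\phi^h(x)$, $f^h_s(x)$, $g^{h,\rho}_s(x)$ in the sense of \eqref{smoothing}. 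Since ${\bf\Phi}_h$ is one-to-one with equivalent norms (Remark \ref{remark norms}), the uniqueness transported from Theorem \ref{theorem eu} immediately gives the uniqueness of the corresponding $\ell_2$-solution.

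For existence and uniqueness in (ii), I would apply $(\cI^h)^{-1}$, which exists and is bounded on $\ell_2(\bG_h)$ by Lemma \ref{remark R}, to both sides of \eqref{eqU}, yielding
\begin{equation*}
U^h_t=(\cI^h)^{-1}\Phi+\int_0^t(\cI^h)^{-1}\bigl[\cL^h_sU^h_s+F_s\bigr]\,ds+\int_0^t(\cI^h)^{-1}\bigl[\cM^{h,\rho}_sU^h_s+G^\rho_s\bigr]\,dW^\rho_s.
\end{equation*}
By Remark \ref{remark uU} and Lemma \ref{remark R} the operators $(\cI^h)^{-1}\cL^h_s$ and $(\cI^h)^{-1}\cM^{h,\rho}_s$ are bounded on $\ell_2(\bG_h)$ uniformly in $(\omega,s)$ for each fixed $h\neq 0$, so this is an affine linear SDE in a Hilbert space with globally Lipschitz coefficients, and the standard Hilbert-space theory provides a unique adapted continuous $\ell_2(\bG_h)$-valued solution.

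To obtain the estimate \eqref{estimate U} I would transport the problem back to $V_h\subset L_2(\RR^d)$ and invoke Theorem \ref{theorem eu}. Given $\ell_2(\bG_h)$-data $\Phi,F,G^\rho$, set
\begin{equation*}
\tilde\phi:={\bf\Phi}_h(\cI^h)^{-1}\Phi,\qquad \tilde f_t:={\bf\Phi}_h(\cI^h)^{-1}F_t,\qquad \tilde g^\rho_t:={\bf\Phi}_h(\cI^h)^{-1}G^\rho_t,
\end{equation*}
which are $V_h$-valued. A short computation using $R_{-\lambda}=R_\lambda$ (Lemma \ref{lemR}) shows $\tilde\phi^h=\Phi$ and, likewise, $\tilde f^h=F$, $\tilde g^{h,\rho}=G^\rho$ in the smoothed-data sense of \eqref{smoothing}. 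Because $\tilde\phi\in V_h$ one has $\pi^h\tilde\phi=\tilde\phi$, and Remark \ref{remark norms} combined with \eqref{inverse} yields $|\tilde\phi|_0^2\leq\delta^{-1}|\Phi|_{0,h}^2$ and analogous bounds for $\tilde f_t$, $\tilde g^\rho_t$. Theorem \ref{theorem eu} then provides a $V_h$-solution $\tilde u^h$ of \eqref{finelem} with data $(\tilde\phi,\tilde f,\tilde g^\rho)$ satisfying $E\sup_{t\leq T}|\tilde u^h_t|_0^2\leq N\delta^{-1}E\cK_{0,h}^2$. By part (i), ${\bf\Phi}_h^{-1}\tilde u^h$ solves \eqref{eqU} with data $(\Phi,F,G^\rho)$, and by the uniqueness from the previous paragraph it coincides with $U^h$. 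Finally, the inequality $\delta|U|_{0,h}^2\leq|{\bf\Phi}_hU|_0^2$ of Remark \ref{remark norms} converts the $|\cdot|_0$-bound on $\tilde u^h$ into the desired $|\cdot|_{0,h}$-bound on $U^h$.

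The only slightly delicate point is the lifting step, namely the identity $\tilde\phi^h=\Phi$ for $\tilde\phi={\bf\Phi}_h(\cI^h)^{-1}\Phi$, which rests on the symmetry $R_{-\lambda}=R_\lambda$ and on the definition \eqref{smoothing}; everything else is bookkeeping. A fully self-contained alternative would be to apply It\^o's formula directly to the quadratic form $(U^h_t,\cI^hU^h_t)_{0,h}$, which is equivalent to $|U^h_t|_{0,h}^2$ by Assumption \ref{assumption invertibility}, and to reproduce the coercivity calculation of Theorem \ref{theorem eu} at the discrete level; this is feasible but less clean because the quadratic variation acquires a factor of $(\cI^h)^{-1}$ inside the sum over $\rho$, requiring extra work to extract the $\kappa$ from Assumption \ref{assumption parab}, which is why routing through Theorem \ref{theorem eu} is preferable.
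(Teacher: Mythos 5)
Your proposal is correct and follows essentially the same route as the paper: part (i) by direct substitution and division by $|h|^d$, and part (ii) by invoking the invertibility of $\cI^h$ (Lemma \ref{remark R}) together with standard Hilbert-space SDE theory for existence and uniqueness, then lifting the data to $V_h$ via $\sum_y(\cI^h)^{-1}\Phi(y)\psi^h_y$ (etc.), verifying the smoothed-data identity, and transporting the estimate of Theorem \ref{theorem eu} back through the norm equivalence of Remark \ref{remark norms}. The paper's proof is the same argument, including the key identity $|h|^{-d}(\tilde\phi,\psi^h_x)=\cI^h\{(\cI^h)^{-1}\Phi\}(x)=\Phi(x)$.
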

\begin{proof} 
(i) Substituting \eqref{uU} into equation \eqref{finelem}, then dividing both  
sides of the equation by  $|h|^d$  we obtain 
equation \eqref{eqU} for  $U^h$ by simple calculation. 
Hence by 
Remark \ref{remark uU} we can see that  $U^h$  is the unique  $\ell_2(\bG)$-valued 
solution to \eqref{eqU}. 

To prove (ii) we use  Remark \ref{remark R} on the invertibility 
of $\cI^h$ and a standard result on solvability of SDEs in Hilbert spaces to see that 
equation \eqref{eqU} with $\Phi$, $F$ and $G^{\rho}$ has a unique $\ell_2(\bG)$-valued 
solution  $U^h$. We claim that $u^h_t(\cdot)=\sum_{y\in\bG}  U^h_t  (y)\psi^h_{y}(\cdot)$ is the $V_h$-valued 
solution of \eqref{finelem} with 
$$
\phi(\cdot):=\sum_{y\in\bG_h}(\cI^{h})^{-1}\Phi(y)\psi^h_y(\cdot), \quad 
f_t(\cdot):=\sum_{y\in\bG_h}(\cI^h)^{-1}F_t(y)\psi^h_y(\cdot),
$$
and
$$ 
g_t^{\rho}(\cdot):=\sum_{y\in\bG_h}(\cI^h)^{-1}G^{\rho}_t(y)\psi^h_y(\cdot), 
$$
respectively. Indeed, \eqref{eqIh} yields
\begin{align*}
|h|^{-d} ( \phi , \psi^h_x)&= |h|^{-d} \sum_{y\in\bG_h}(\psi^h_y,\psi^h_x)(\cI^h)^{-1}\Phi(y)
=\sum_{y\in\bG_h}R_{\frac{y-x}{h}}(\cI^h)^{-1}\Phi(y)  \\
& =\cI^h\{(\cI^h)^{-1}\Phi\}(x)= \Phi(x), \quad x\in\bG_h. 
\end{align*}
In the same way we have 
$$
|h|^{-d}  (f_t,\psi_{x}^h)=F_t(x), \quad  |h|^{-d} (g^{\rho}_t,\psi_{x}^h)=G^{\rho}_t(x)\quad 
\text{for $x\in\bG_h$}, 
$$
which proves the claim. Using Remarks \ref{remark norms} and \ref{remark R}  we deduce 
\begin{align*} 
\|\phi\|& \leq N|(\cI^{h})^{-1}\Phi|_{0,h}\leq \frac{N}{\delta}| \Phi |_{0,h}, 
\quad 
\|f_t\|\leq N|(\cI^{h})^{-1}F_t|_{0,h}\leq \frac{N}{\delta}|F_t|_{0,h}, \\
\sum_{\rho}\|g^{\rho}_t\|^2 &
\leq N^2\; \sum_{\rho}  |(\cI^{h})^{-1}G^{\rho}_t|^2_{0,h}
\leq \frac{N^2}{\delta^2}\sum_{\rho}  |G^{\rho}_t|_{0,h}^2 
\end{align*} 
with a constant $N=N(\psi,\Lambda)$. 
Hence by Theorem \ref{theorem eu} 
$$
E\sup_{t\leq T}\|u_t^h\|^2\leq NE|\phi|^2_{0,h}
+NE\int_0^T\Big( |F_t|^2_{0,h}+\sum_{\rho}|G^{\rho}_t|^2_{0,h}\Big) \,dt
$$
with $N=N(K,T,\kappa,d,\psi,\Lambda,\delta)$  independent of $h$, 
which by virtue of Remark 
\ref{remark norms} implies 
estimate \eqref{estimate U}. 
\end{proof}

\section{Coefficients of the expansion}

Notice that the lattice $\bG_h$ and the space $V_h$ can be ``shifted" to 
any $y\in\bR^d$, i.e., we can consider $\bG_h(y):=\bG_h+y$ and 
$$
V_h(y)
:=\Big\{\sum_{x\in\bG_h(y)}U(x) \psi^{h}_{x}: (U(x))_{x\in{\bG_h(y)}}\in \ell_2(\bG_h(y))\Big\}.  
$$
Thus equation \eqref{finelem} for $u^h=\sum_{x\in\bG_h(y)}U(x)\psi^h_x$ 
should be satisfied for $\psi_x$, $x\in\bG_h(y)$. Correspondingly, 
equation \eqref{eqU} can be considered 
for all $x\in \bR^d$ 
instead of $x\in \bG_h$.   

To determine the coefficients $(v^{(j)})_{j=1}^{k}$  in  the  expansion \eqref{expansion} 
we differentiate formally \eqref{eqU} in the parameter $h$, $j$ times, for $j=1,2,...,J$, 
and consider the system of SPDEs we obtain for the formal derivatives 
 \begin{equation} 			\label{Djh}
u^{(j)}=D_h^jU^h\big|_{h=0},  
\end{equation}  
where $D_h$ denotes differentiation in $h$. 
To this end given an integer $n\geq 1$ let us  
first investigate the operators $\cI^{(n)}$, $\cL^{(n)}_t$ and  $\cM^{(n)\rho}_t$ 
defined by 
\begin{align}			\label{I(n)}
&\cI^{(n)}\varphi(x)=D^n_h\cI^h\varphi(x)\big|_{h=0}, 
\quad 
\cL^{(n)}_t\varphi(x)=D^n_h\cL^h_t\varphi(x)\big|_{h=0}, \nonumber \\
& \cM^{(n)\rho}_t\varphi(x)=D^n_h\cM^{h,\rho}_t\varphi(x)\big|_{h=0}
\end{align}
for $\varphi\in C_0^{\infty}$.

\begin{lemma}                                                               \label{lemma hdiff}
Let Assumption \ref{assumption coeff} hold with $m\geq n+l+2$ for 
nonnegative integers $l$ and $n$. Let Assumption \ref{compatibility}. 
also hold. Then for $\varphi\in C_0^{\infty}$ and $t\in [0,T]$ we have 
\begin{equation}                                 \label{estimate_hdiff}
|\cI^{(n)}\varphi|_l\leq  N|\varphi|_{l+n}, \quad 
|\cL^{(n)}_t\varphi|_{l} \leq N|\varphi|_{l+2+n}, \quad
\sum_\rho|\cM^{(n)\rho}_t\varphi|_{l}^2 \leq N^2|\varphi|_{l+1+n}
\end{equation}
with a constant $N=N(K,d,l, n,\Lambda,\Psi)$  which does not depend on $h$. 
\end{lemma}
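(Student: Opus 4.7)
My plan is to Taylor expand each of $\cI^h\varphi(x)$, $\cL^h_t\varphi(x)$, and $\cM^{h,\rho}_t\varphi(x)$ in $h$ around $h=0$, identify $\cI^{(n)}\varphi$, $\cL^{(n)}_t\varphi$, $\cM^{(n)\rho}_t\varphi$ as the coefficient of $h^n/n!$ in the resulting expansion, and then bound the required Sobolev norms via Leibniz together with the $L_\infty$ bounds on derivatives of the coefficients provided by Assumption \ref{assumption coeff} with $m\geq n+l+2$. The bound for $\cI^{(n)}$ is immediate: since $\Gamma$ is finite, Taylor expanding $\varphi(x+h\lambda)$ gives
\[
\cI^{(n)}\varphi(x) \;=\; \sum_{\lambda\in\Gamma} R_\lambda \,(\lambda\cdot\nabla)^n\varphi(x),
\]
a bounded linear combination of partial derivatives of $\varphi$ of order $n$, and the estimate $|\cI^{(n)}\varphi|_l \leq N|\varphi|_{l+n}$ follows by taking $H^l$ norms.

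For $\cL^{(n)}_t$ the difficulty is the singular prefactors $h^{-2}$ and $h^{-1}$ in the definition \eqref{eqLh}. I would Taylor expand $a^{ij}_t(x+hz)$, $b^i_t(x+hz)$, $c_t(x+hz)$ in $h$ up to order $m$, expand $\varphi(x+h\lambda)$ in $h$, substitute into the definition of $\cL^h_t\varphi(x)$, and regroup by powers of $h$. The $h^{-2}$ coefficient reduces to $a^{ij}_t(x)\varphi(x)\sum_{\lambda\in\Gamma}R^{ij}_\lambda$ and vanishes by \eqref{2.5.1}. The $h^{-1}$ coefficient splits into three sums: $a^{ij}_t(x)D_k\varphi(x)\sum_\lambda \lambda_k R^{ij}_\lambda$, $D_k a^{ij}_t(x)\varphi(x)\sum_\lambda \int_{\bR^d} z_k D_j\psi_\lambda(z) D^{\ast}_i\psi(z)\,dz$, and $b^i_t(x)\varphi(x)\sum_\lambda R^i_\lambda$; the change of variables $z\mapsto -z$, combined with the evenness of $\psi$ and the symmetry $\Gamma=-\Gamma$ (a consequence of the symmetry of $\psi$ and $\Lambda$), shows that each of these three sums is zero, matching the parity statements of Lemma \ref{lemR}. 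Consequently $\cL^h_t\varphi(x)$ has a Taylor expansion in $h$ with finite value at $h=0$, and $\cL^{(n)}_t\varphi(x)$ can be written as a finite linear combination of terms $D^\alpha a^{ij}_t(x)D^\beta\varphi(x)$ (with analogous contributions from $b^i_t$ and $c_t$) with $|\alpha|+|\beta|=n+2$ and coefficients bounded in terms of $\psi$ and $\Lambda$ only. Applying Leibniz for any $|\gamma|\leq l$ and bounding $|D^{\gamma_1+\alpha}a^{ij}_t|_{L_\infty}$ by $K$ (which is legitimate because $|\gamma_1|+|\alpha|\leq l+n+2\leq m$) then yields $|\cL^{(n)}_t\varphi|_l \leq N|\varphi|_{l+n+2}$.

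The argument for $\cM^{(n)\rho}_t$ runs in parallel but is simpler by one order in $h$: only the $h^{-1}$ contribution from $S^{h,\rho}_t/h$ is singular, and its leading coefficient $\sigma^{i\rho}_t(x)\varphi(x)\sum_\lambda R^i_\lambda$ vanishes by the symmetry identity $\sum_\lambda R^i_\lambda = 0$ from Lemma \ref{lemR}. The resulting representation of $\cM^{(n)\rho}_t\varphi$ is a finite linear combination of terms $D^\alpha \sigma^{i\rho}_t D^\beta\varphi$ and $D^\alpha \nu^\rho_t D^\beta\varphi$ with $|\alpha|+|\beta|\leq n+1$, and the $\ell_2$-sum bound $\sum_\rho|\cM^{(n)\rho}_t\varphi|_l^2 \leq N^2|\varphi|_{l+n+1}^2$ follows by pulling the $\ell_2$-norm in $\rho$ inside the spatial integral and using the $\ell_2$-boundedness of $(\sigma^{i\rho}_t)$, $(\nu^\rho_t)$ and their spatial derivatives from Assumption \ref{assumption coeff}. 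The main obstacle is the bookkeeping in the Taylor expansion of $\cL^h_t$: pinpointing exactly which sums over $\lambda$ must cancel to kill the $h^{-2}$ and $h^{-1}$ singularities, verifying each cancellation from the symmetry of $\Gamma$ and identity \eqref{2.5.1}, and then enumerating the surviving terms of total order $n+2$; beyond this careful accounting no new idea is required.
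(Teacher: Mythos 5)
Your proposal is correct and follows essentially the same route as the paper: Taylor expansion in $h$ of both the coefficients and $\varphi(x+h\lambda)$, cancellation of the $h^{-2}$ and $h^{-1}$ singularities via the parity identities $R^{ij}_{-\lambda}=R^{ij}_\lambda$, $S^{ij}_{-\lambda}=-S^{ij}_\lambda$, $R^i_{-\lambda}=-R^i_\lambda$ together with $\sum_{\lambda\in\Gamma}R^{ij}_\lambda=0$ from \eqref{2.5.1}, and then Leibniz plus the $L_\infty$ bounds of Assumption \ref{assumption coeff}. The only cosmetic difference is that the paper packages the surviving singular sums as symmetric first and second difference quotients with integral representations (so that smoothness at $h=0$ is manifest before differentiating $n$ times), whereas you expand $\varphi(x+h\lambda)$ fully and check that the negative-power coefficients vanish; both come to the same bookkeeping.
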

\begin{proof}
Clearly, $\cI^{(n)}
=\sum_{\lambda\in\Gamma}
R_{\lambda}\partial^n_{\lambda}\varphi$, 
where 
\begin{equation}  \label{deltalambda}
\partial_{\lambda}\varphi:=\sum_{i=1}^d\lambda^iD_i  \varphi.
\end{equation}  
This shows the existence 
of a constant $N=N(\Lambda, \psi,d,n)$ 
such that the first estimate in \eqref{estimate_hdiff} 
holds. To prove the second estimate we first claim 
the existence of a constant $N=N(K,d,l,\Lambda,\psi )$  
such that 
\begin{equation}                  \label{eq4.3}
\Big|D^n_h  \Phi_t  (h,\cdot)\big|_{h=0}\Big|_l\leq N|\varphi|_{l+n+2}
\end{equation}
for 
$$
\Phi_t  (h,x):=h^{-2}\sum_{\lambda\in\Gamma}\varphi(x+h\lambda)
\int_{\bR^d}a^{ij}_t(x+hz)D_j\psi_{\lambda}(z)D_i^{\ast}\psi(z)\,dz. 
$$ 
 Recall the definition of $R^{ij}_\lambda$ given in \eqref{defR}.  
 To prove  \eqref{eq4.3}  we write 
$\Phi_t(h,x)=\sum_{i=1}^3 \Phi^{(i)}_t(h,x)$ for $h\neq0$ 
with 
\begin{align*}
\Phi^{(1)}_t(h,x)& =h^{-2}\sum_{\lambda\in\Gamma}\varphi(x+h\lambda)
\int_{\bR^d}a^{ij}_t(x)D_j\psi_{\lambda}(z)D_i^{\ast}\psi(z)\,dz
\\
& =h^{-2}a^{ij}_t(x)\sum_{\lambda\in\Gamma}\varphi(x+h\lambda)R_{\lambda}^{ij}, 
\\
\Phi^{(2)}_t(h,x)&=h^{-1}\sum_{\lambda\in\Gamma}\varphi(x+h\lambda)
\int_{\bR^d} \sum_{k=1}^d  D_k a^{ij}_t(x) z_k D_j\psi_{\lambda}(z)D_i^{\ast}\psi(z)\,dz, 
\\
&=h^{-1}\sum_{\lambda\in\Gamma}\varphi(x+h\lambda)
Da^{ij}_t(x)S^{ij}_{\lambda}, 
\end{align*}
for 
\[ S^{ij}_{\lambda}:=\int_{\bR^d}zD_j\psi_{\lambda}(z)D_i^{\ast}\psi(z)\,dz  \in {\RR^d} ,\]
and
\begin{align*}
\Phi^{(3)}_t(h,x)&=\sum_{\lambda\in\Gamma}\varphi(x+h\lambda)
\int_{\bR^d}
\int_0^1(1-\vartheta)D_{kl}a^{ij}_t(x+h\vartheta z)z^kz^lD_j\psi_{\lambda}(z)D_i^{\ast}\psi(z)
\,d\vartheta\,dz, 
\end{align*}
where $D_{kl}:=D_kD_l$. 
Here we used Taylor's formula
\begin{equation}                                           \label{taylor} 
f(h)=\sum_{i=0}^{n}\frac{h^{i}}{i!}
f^{(i)}(0)+\frac{h^{n+1}}{n!}
\int_{0}^{1}(1-\theta)^{n}f^{(n+1)}(h\theta)
\,d\theta
\end{equation}
with $n=1$  and $f(h):=a^{ij}_t(x+h\lambda)$. 

Note that   Lemma \ref{lemR} and \eqref{2.5.1} in Assumption \ref{compatibility}  imply
\begin{align}              \label{phi1}
\Phi^{(1)}_t(h,x)&=
\frac{1}{2}a^{ij}_t(x)\sum_{\lambda\in\Gamma}
R^{ij}_{\lambda}h^{-2}(\varphi(x+h\lambda)-2\varphi(x)+\varphi(x-h\lambda))
  \nonumber    \\                                                                  
& =\frac{1}{2}a^{ij}_t(x)\sum_{\lambda\in\Gamma}
R^{ij}_{\lambda}\int_{0}^{1}\int_{0}^{1}
 \partial^2_{ \lambda}
\varphi(x+h\lambda(\theta_{1}-\theta_{2})) \,d\theta_{1}d\theta_{2}. 
\end{align}
 To rewrite $\Phi^{(2)}_t(h,x)$ note that $S^{ij}_{-\lambda} = -S^{ij}_{\lambda}$; 
 indeed since $\psi(-x)=\psi(x)$ the  change of variables
$y=-z$ implies that
\begin{align}             \label{S-lambda}
S^{ij}_{-\lambda} &=\int_{\RR^d} z D_j\psi(z+\lambda) D_i^\ast \psi(z) dz 
=-\int_{\RR^d} y D_j\psi(-y+\lambda) D_i^\ast \psi(-y) dy
\nonumber \\
&=-\int_{\RR^d} y D_j\psi(y-\lambda) D_i^\ast \psi(y) dy=-S^{ij}_{\lambda}.
\end{align}
Furthermore, an obvious change of variables,  \eqref{S-lambda} 
and Lemma \ref{lemR}  yield 
\begin{align*}
S^{ji}_\lambda&=\int_{\RR^d} z D_i\psi(z-\lambda) D_j^\ast \psi(z) dz 
= \int_{\RR^d} (z+\lambda) D_i\psi(z) D_j^\ast \psi(z+\lambda) dz\\
&=\int_{\RR^d} z D_i^\ast \psi(z) D_j \psi_{-\lambda}(z) dz 
+ \lambda \int_{\RR^d} D_i^\ast \psi(z) D_j\psi_{-\lambda}(z) dz \\
&=S^{ij}_{-\lambda} +\lambda R^{ij}_{-\lambda} 
=-S^{ij}_{\lambda} +\lambda R^{ij}_{\lambda} .
\end{align*}
This implies
\[ S^{ji}_\lambda + S^{ij}_\lambda =\lambda R^{ij}_\lambda,  \quad  i,j=1, ..., d .\]
Note that since $a^{ij}_t(x)=a^{ji}_t(x)$, we deduce 
\begin{equation}           \label{S-R}
 D a^{ij}_t(x) S^{ij}_\lambda = D a^{ij}_t(x) S^{ji}_\lambda 
 =\frac{1}{2}  D a^{ij}_t(x) \lambda R^{ij}_\lambda 
= \frac{1}{2}  R^{ij}_\lambda   \partial_\lambda  a^{ij}_t(x),
\end{equation} 
for  $\partial_\lambda F $  defined by \eqref{deltalambda}. 
Thus the equations \eqref{S-lambda} and \eqref{S-R} imply 
\begin{align}            \label{phi2}     
\Phi^{(2)}_t(h,x)
& = \frac{1}{2}
\sum_{\lambda\in\Gamma}h^{-1}
(\varphi(x+h\lambda)-\varphi(x-h\lambda)) 
Da^{ij}_t(x)S^{ij}_{\lambda}
\nonumber \\                                                
& = \frac{1}{4} \sum_{\lambda\in\Gamma}  R^{ij}_{\lambda}\partial_{\lambda}a^{ij}_t(x)
\; 2 \int_{0}^{1}\partial_{\lambda}\varphi(x+h\lambda(2\theta-1))
\,d\theta .
\end{align}
From \eqref{phi1} and \eqref{phi2} we get 
\begin{align*}
D^n_h\Phi^{(1)}_t(h,x)\big|_{h=0}& =
\frac{1}{2}a^{ij}_t(x)\sum_{\lambda\in\Gamma}R^{ij}_{\lambda}  
\int_{0}^{1}\int_{0}^{1} \partial^{n+2}_{\lambda} {\varphi(x)}(\theta_1-\theta_2)^n
\,d\theta_{1}d\theta_{2}, \\
D^n_h\Phi^{(2)}_t(h,x)\big|_{h=0}
&=
\frac{1}{2} \sum_{\lambda\in\Gamma}R^{ij}_{\lambda}\partial_{\lambda}a^{ij}
\int_{0}^{1} \partial^{n+1}_{\lambda}{\varphi(x)} (2\theta-1)^n
\,d\theta.  
\end{align*}
Furthermore, the definition of $\Phi^{(3)}_t(h,x)$ yields
\begin{align*}
& D^n_h\Phi^{(3)}_t(h,x)\big|_{h=0}\\
&\quad =
\sum_{\lambda\in\Gamma}
\sum_{k=0}^n{n\choose k}\partial_{\lambda}^{n-k}\varphi{(x)}
\int_{\bR^d}
\int_0^1(1-\theta)\theta^{k}\partial_z^{k}D_{kl}a^{ij}_t(x)z^kz^lD_j\psi_{\lambda}(z)D_i^{\ast}\psi(z)
\,d\theta\,dz.  
\end{align*}
 Using Assumption \ref{assumption coeff} and Remark \ref{remark_Rlambda}, this completes the proof of \eqref{eq4.3}. 

Taylor's formula \eqref{taylor} with $n=0$ and $f(h):=b^i_t(x+h  z )$ implies
\begin{align*}
\tilde{\Phi}_t(h,x):&=h^{-1} \sum_{\lambda\in \Gamma} \varphi(x+h\lambda) \int_{\RR^d} b^i_t(x+hz) D_i\psi_\lambda(z) \psi(z) dz\\
&={\Phi}^{(4)}_t(h,x) + {\Phi}^{(5)}_t(h,x),
\end{align*}
with
\begin{align*}
{\Phi}^{(4)}_t(h,x)&= h^{-1} b^i_t(x) 
\sum_{\lambda\in \Gamma} \varphi(x+h\lambda) R^i_\lambda, \\
{\Phi}^{(5)}_t(h,x)&=  \sum_{\lambda\in \Gamma} \varphi(x+h\lambda) 
\int_{\RR^d} \int_0^1 (1-\theta) \sum_{k=1}^d 
D_k b^i_t(x+h\theta z) z_k 
D_i\psi_\lambda(z) \psi(z) d\theta dz.
\end{align*}
Using Lemma \ref{lemR} and computations similar to those used 
to prove \eqref{eq4.3} we deduce that
\begin{align*}
\Phi^{(4)}_t(h,x)&=\frac{1}{2} \sum_{\lambda\in \Gamma} 
h^{-1} \big[ \varphi(x+h\lambda)-\varphi(x-h\lambda)\big] b^i_t(x) R^i_\lambda\\
&= b^i_t(x) \sum_{\lambda\in \Gamma} R^i_\lambda 
\int_0^1 \partial_\lambda \varphi\big(x+ h\lambda(2\theta-1)\big) d\theta, 
\end{align*}
which yields
\[ D^n_h\Phi^{(4)}_t(h,x) \big|_{h=0} 
= b^i_t(x) \sum_{\lambda\in \Gamma} R^i_\lambda \partial_\lambda^{n+1}  
\varphi({x})  \int_0^1   (2\theta-1)^n d\theta.
\] 
Furthermore, the definition of $\Phi^{(5)}(h,x)$ implies
\begin{align*}
& D^n_h\Phi^{(5)}_t(h,x){\big|_{h=0}}\\
&\quad =\sum_{\lambda\in \Gamma} \sum_{\alpha=0}^n
 {n\choose \alpha } \int_{\bR^d}
\int_0^1(1-\theta) \partial_{\lambda}^{n-\alpha}\varphi({x})
\theta^{\alpha} \partial_z^{\alpha} D_{\alpha}b^{i}_t(x) z^\alpha D_i\psi_{\lambda}(z)\psi(z)
\,d\theta\,dz.  
\end{align*}
This implies 
the existence of a constant $N=N(K,d,l,\Lambda,\psi )$   which does not depend on $h$  
such that 
\begin{equation}                  \label{eq4.9}
\Big|D^n_h \tilde{ \Phi}_t  (h,\cdot)\big|_{h=0}\Big|_l\leq N|\varphi|_{l+n+1} .
\end{equation}
Finally, let 
\[ \Phi^{(6)}_t(h,x):=\sum_{\lambda \in \Gamma} \varphi(x+h\lambda)
\int_{\RR^d} c_t(x+hz) \psi_\lambda(z)\psi(z) dz.\]
Then we have
\[ D^n_h\Phi^{(6)}_t(h,x){\big|_{h=0}}
 =\sum_{\lambda\in \Gamma} \sum_{\alpha=0}^n
 {n\choose \alpha } \partial_{\lambda}^{n-\alpha}\varphi({x}) 
 \int_{\bR^d}
 \partial_z^{\alpha} c_t(x) \psi_{\lambda}(z)\psi(z)  \,dz. 
\] 
so that 
\begin{equation}                                                                            \label{eq4.10}
\Big|D^n_h  \Phi^{(6)}_t (h,\cdot)\big|_{h=0}\Big|_l\leq N|\varphi|_{l+n}
\end{equation}
for some constant $N$ as above. 

Since  $\cL_t^h \varphi(x)=\Phi_t(h,x)+\tilde{\Phi}_t(h,x)+\Phi^{(6)}_t(h,x)$, 
the inequalities \eqref{eq4.3}, \eqref{eq4.9} and \eqref{eq4.10} imply
that 
$\cL^{(n)}_t$ satisfies the estimate in \eqref{estimate_hdiff}; 
the upper estimates of  $\cM^{(n),\rho}_t$  can be proved similarly.
\end{proof}

For an integer $k\geq0$ define the operators $\hat L_t^{(k)h}$, $\hat M_t^{(k)h,\rho}$ 
and $\hat I^{(k)h}$ by 
\begin{align}
\hat L_t^{(k)h}\varphi=\cL^h_t\varphi-\sum_{i=0}^{k}\frac{h^i}{i!}& \cL_t^{(i)}\varphi,  
\quad
\hat M_t^{(k)h,\rho}\varphi=\cM^{h,\rho}_t\varphi-\sum_{i=0}^{k}\frac{h^i}{i!}\cM^{(i)\rho}_t\varphi,  \nonumber \\ 
&\hat I^{(k)h}\varphi=\cI^h\varphi-\sum_{i=0}^{k}\frac{h^i}{i!}\cI^{(i)}\varphi,  			\label{expand_LMIh}
\end{align}
where $\cL^{(0)}_t:=\cL_t$, $\cM^{(0),\rho}_t:=\cM^{\rho}_t$, and $\cI^{(0)}$ is the identity 
operator.

\begin{lemma}   \label{lemma4.2}
{\it Let Assumption \ref{assumption coeff} hold with $m\geq k+l+3$ for 
nonnegative integers $k$ and $n$. Let Assumption \ref{compatibility}
also hold. Then for $\varphi\in C_0^{\infty}$ we have 
\begin{align*}
& |\hat L_t^{(k)h}\varphi|_l\leq N |h|^{k+1} |\varphi|_{l+k+3},  
\quad
\sum_{\rho}|\hat M_t^{(k)h,\rho}\varphi|^2_l\leq N^2 |h|^{2k+2} |\varphi|^2_{l+k+2}. 
\\
& |\hat I^{(k)h}\varphi|_l\leq N  |h|^{k+1} |\varphi|_{k+1}, 
\end{align*}
 for a constant $N$ which does not depend on $h$. 
}
\end{lemma}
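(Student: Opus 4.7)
The plan is to recognize $\hat I^{(k)h}$, $\hat L^{(k)h}_t$, $\hat M^{(k)h,\rho}_t$ as the order-$k$ Taylor remainders at $h=0$ of $\cI^h$, $\cL^h_t$, $\cM^{h,\rho}_t$ respectively (viewed as functions of $h$ with $x$, and $\omega,t$ frozen), and then to apply Taylor's formula \eqref{taylor} with $n=k$. This yields, for each $\varphi\in C_0^\infty$ and each $x$,
$$
\hat I^{(k)h}\varphi(x)=\frac{h^{k+1}}{k!}\int_0^1(1-\theta)^k\,\bigl(D_h^{k+1}\cI^{h}\varphi(x)\bigr)\big|_{h\,\to\,\theta h}\,d\theta,
$$
and analogous expressions for $\hat L^{(k)h}_t\varphi(x)$, $\hat M^{(k)h,\rho}_t\varphi(x)$. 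Once this representation is in hand, it suffices to establish uniform (in $h$) $H^l$-bounds on the $(k+1)$-th $h$-derivative of $\cI^h\varphi$, $\cL^h_t\varphi$, $\cM^{h,\rho}_t\varphi$, and then integrate in $\theta$.

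For $\cI^h$ a direct computation gives $D_h^{k+1}\cI^h\varphi(x)=\sum_{\lambda\in\Gamma}R_\lambda\,\partial_\lambda^{k+1}\varphi(x+h\lambda)$ with $\partial_\lambda=\sum_i\lambda^iD_i$; by the finiteness of $\Gamma$, the boundedness of $R_\lambda$ from Remark \ref{remark_Rlambda}, and the translation-invariance of the $H^l$-norm, this is bounded by $N|\varphi|_{l+k+1}$ uniformly in $h$. For $\cL^h_t$ and $\cM^{h,\rho}_t$ the singular prefactors $h^{-2}$ and $h^{-1}$ prevent naive differentiation, so I would reuse the algebraic rewriting already carried out in the proof of Lemma \ref{lemma hdiff}: using Lemma \ref{lemR} together with the cancellations \eqref{2.5.1}, \eqref{2.5.2}, \eqref{2.5.3.1}, \eqref{2.5.4} of Assumption \ref{compatibility}, one writes $\cL^h_t\varphi$ as a sum of pieces $\Phi^{(1)},\ldots,\Phi^{(6)}$, each expressed as an absolutely convergent integral of a shifted derivative $\partial_\lambda^p\varphi(x+h\,\cdot)$ multiplied by a bounded coefficient evaluated at $x+h\,\cdot$. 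Crucially, these representations are valid for \emph{every} $h\in\mathbb R$, not only for $h=0$.

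Consequently one may differentiate $k+1$ times under the finite sum in $\lambda$ and under the integrals in $\theta$ and $z$. Each $h$-derivative either falls on the factor $\partial_\lambda^p\varphi(x+h(\cdot))$, raising its differential order by one, or falls on a coefficient $D^\alpha a^{ij}_t(x+h\vartheta z)$, $D^\alpha b^i_t(x+h\theta z)$, $c_t(x+hz)$ etc., bringing down at most one extra $x$-derivative by the chain rule. No negative power of $h$ is produced. Expanding, one obtains a finite sum of terms of the form (bounded function of $x+h\,\cdot$)$\,\cdot\,\partial_\lambda^j\varphi(x+h\,\cdot)$ with $j\le k+3$ for $\cL$ and $j\le k+2$ for $\cM$. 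Taking $L^2$-norms in $x$, using translation-invariance of $H^l$-norms, the bound on the coefficients and on $D^\alpha R_\lambda^{\alpha\beta}$ from Assumption \ref{assumption coeff} (which requires exactly $m\geq k+l+3$) and Remark \ref{remark_Rlambda}, together with the standard estimate of the $H^l$-norm of a product by the product of the $C^l$-norm of the coefficient and the $H^l$-norm of $\varphi$, yields
$$
|D_h^{k+1}\cL^h_t\varphi|_l\leq N|\varphi|_{l+k+3},\qquad \sum_\rho|D_h^{k+1}\cM^{h,\rho}_t\varphi|_l^2\leq N^2|\varphi|_{l+k+2}^2
$$
uniformly in $h\in\mathbb R$. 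Substituting these bounds into the Taylor remainder representation gives the three claimed inequalities (with the natural bound $N|h|^{k+1}|\varphi|_{l+k+1}$ for $\hat I^{(k)h}$).

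The main obstacle is bookkeeping rather than any new idea: $\cL^h_t$ splits into six pieces, each of which, after the $(k+1)$-th $h$-differentiation, expands into a combinatorially large finite sum. The essential observation making this routine is that, once the singular form has been rewritten in integral form via the symmetries of Lemma \ref{lemR} and Assumption \ref{compatibility}, all further $h$-differentiation is harmless; the symmetry and cancellation identities are needed only at the initial rewriting step, not at each subsequent differentiation.
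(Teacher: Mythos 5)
Your proposal is correct and follows essentially the same route as the paper: the authors likewise apply Taylor's formula \eqref{taylor} with integral remainder to the pieces $\Phi^{(1)},\dots,\Phi^{(6)}$ constructed in the proof of Lemma \ref{lemma hdiff} (where the symmetries of Lemma \ref{lemR} and the cancellations of Assumption \ref{compatibility} have already absorbed the singular prefactors $h^{-2}$, $h^{-1}$), identify $\cL_t=\lim_{h\to 0}\cL^h_t$ via the compatibility identities, and estimate the remainder by Minkowski's inequality. Your reading of the $\hat I^{(k)h}$ bound as $N|h|^{k+1}|\varphi|_{l+k+1}$ is indeed what the argument yields.
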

\begin{proof} 
We obtain the estimate for $\hat L_t^{(k)h}$ by applying Taylor's formula 
\eqref{taylor} 
to  $f(h):= \Phi^{(i)}_t(h,x)$ for $i=1,..., 6$ defined in the proof of Lemma \ref{lemma hdiff}, 
and by estimating the remainder term 
$$
\frac{h^{k+1}}{k!}
\int_{0}^{1}(1-\theta)^{k}f^{(k+1)}(h\theta)
\,d\theta
$$
using the Minkowski inequality. Recall that  $\cL_t\varphi(x)  = \cL^{(0)}_t\varphi(x)$.  Using Assumption \ref{compatibility}  we prove that 
$\cL_t^{(0)} \varphi(x)= \lim_{h\to 0} \cL_t^h \varphi(x)$.  
We have $\cL^{h}_t\varphi(x) = \sum_{i=1}^6 \Phi^{(i)}_t(h,x)$ for 
$h\neq 0$. 
 The proof of Lemma \ref{lemma hdiff} shows that   $\tilde{\Phi}^{(i)}_t(0,x)
:=\lim_{h\to 0} \Phi^{(i)}_t(h,x)$ exist and we identify these limits.   
Using   \eqref{phi1}, \eqref{phi2}  and  \eqref{2.5.3} 
with $X_{ij,kl}=a^{ij}_t(x) D_{kl}\varphi(x)$ 
(resp. $X_{ij,kl}=\partial_k a^{ij}_t(x) \partial_l \varphi(x)$)  we deduce 
\begin{align*}
 \tilde{\Phi}^{(1)}_t(0,x) =&\sum_{i,j} \frac{1}{2} a^{ij}_t(x) \sum_{k,l} D_k D_l \varphi(x) \sum_{\lambda\in \Gamma}
 \lambda_k\lambda_l R^{ij}_\lambda =
\sum_{i,j} a^{ij}_t(x) D_{ij}\varphi(x) \\
 \tilde{\Phi}^{(2)}_t(0,x)= & \frac{1}{2} \sum_{i,j} \sum_{k,l}  \partial_k a^{ij}_t(x) \partial_l \varphi(x) \sum_{\lambda\in \Gamma} 
\lambda_k \lambda_l R^{ij}_\lambda = \sum_{i,j}\partial_i a^{ij}_t(x) \partial_j\varphi(x),
\end{align*}
which implies that $\tilde{\Phi}^{(1)}_t(0,x)+\tilde{\Phi}^{(2)}_t(0,x) 
=D_i\big(a^{ij}_t D_j\varphi \big)(x)$. The first identity in \eqref{2.5.4} (resp.
\eqref{2.5.2}, the second identity in \eqref{2.5.4} and the first identity in \eqref{2.5.1}) imply
\begin{align*}
 \tilde{\Phi}^{(3)}_t(0,x) =&\frac{1}{2}\varphi(x) \sum_{k,l} \sum_{i,j} D_{kl}a^{ij}_t(x) 
 \sum_{\lambda\in \Gamma} Q^{ij,kl}_\lambda =0,\\
 \tilde{\Phi}^{(4)}_t(0,x) =&\sum_i b^i_t(x) \sum_k \partial_k\varphi(x)
 \sum_{\lambda\in \Gamma} R^i_\lambda \lambda_k=\sum_i
b^i_t(x) \partial_i\varphi(x),\\
 \tilde{\Phi}^{(5)}_t(0,x) =&  \frac{1}{2} \varphi(x)  \sum_k \sum_i  D_kb^i_t(x) 
 \sum_{\lambda\in \Gamma} \tilde{Q}^{i,k}_\lambda =0,\\
\tilde{\Phi}^{(6)}_t(0,x) =&\varphi(x) c_t(x) \sum_{\lambda\in \Gamma} R_\lambda 
= \varphi(x) c_t(x).
\end{align*}

This completes the identification of $\cL_t$ as the limit of $\cL_t^h$. 
Using once more the Minkowski inequality 
 and usual estimates, we prove the upper estimates of  the $H^l$ norm of 
$\hat{L}^{(k)h}_t\varphi$. 
The other estimates can be proved similarly. 
\end{proof}
Assume that Assumption \ref{assumption data} is satisfied  
with $m\geq J+1$ for an integer 
 $J\geq0$.   A simple computation made for  differentiable functions 
 in place of the formal ones introduced in \eqref{Djh} 
 shows the following identities 

\begin{align*}
\phi^{(i)}(x)=&
\int_{\bR^d}\partial_z^{i}\phi(x)\psi(z)\,dz, 
\;
f^{(i)}_t(x)=\int_{\bR^d}\partial_z^{i}f_t(x)\psi(z)\,dz,
\;
g_t^{(i)\rho}(x)=\int_{\bR^d}\partial_z^{i}g_t^{\rho}(x)\psi(z)\,dz, 
\end{align*} 
where  $\partial_z^i \varphi$ is defined iterating \eqref{deltalambda}, 
while  $\phi^h$, $f_t^h$ and $g_t^{h,\rho}$ are defined in \eqref{smoothing}. 
Set 
\begin{align}			\label{Taylor_free}
 \hat\phi^{(J)h} :=&\phi^{h}-\sum_{i=0}^{J}\frac{h^i}{i!}\phi^{(i)}, 
\;
 \hat f^{(J)h}_t :=f^{h}_t-\sum_{i=0}^{J}\frac{h^i}{i!}f_t^{(i)}  \; \mbox{\rm and }\, 
 \hat g^{(J)h\rho}_t :=g^{h,\rho}_t-\sum_{i=0}^{J}g^{(i)\rho}_t\frac{h^i}{i!}. 
\end{align}
\begin{lemma}		\label{lemma4.3}
Let Assumption \ref{assumption coeff} holds with $m\geq l+J+1$ for nonnegative 
integers $J$ and $l$. Then there is a constant 
$N=N(J,l,d,\psi)$  independent of $h$  such that 
$$
 |\hat\phi^{(J)h}|_l \leq  |h|^{J+1}N|\phi|_{l+1+J},
\quad 
 |\hat f^{(J)h}_t|_l \leq N |h|^{J+1} |f_t|_{l+1+J},
\quad
 |\hat g^{(J)h\rho}_t|_l \leq N  |h|^{J+1} |g^{\rho}_t|_{l+1+J}. 
$$
\begin{proof}
Clearly, it suffices to prove the estimate for 
$\hat\phi^{(J)h}$, and we may assume that $\phi\in C_0^{\infty}$. 
Applying Taylor formula \eqref{taylor} to $f(h)=\phi^h(x)$ for the remainder term 
we 
have 
$$
 \hat\phi^{(J)h}(x) =\frac{h^{J+1}}{J!}
\int_{0}^{1}\int_{\bR^d}(1-\theta)^{J}\partial_z^{J+1}\phi(x+\theta hz)\psi(z)\,dz. 
$$
Hence by Minkowski's inequality and the shift invariance of the Lebesgue measure 
we get 
$$
 |\hat\phi^{(J)h}(x)| \leq \frac{h^{J+1}}{J!}
\int_{0}^{1}\int_{\bR^d}(1-\theta)^{J}|\partial_z^{J+1}\phi(\cdot+\theta hz)|_l|\psi(z)|\,dz
\leq N   h^{J+1}  |\phi|_{l+J+1} 
$$
with a constant $N=N(J,m,d,\psi)$  which does not depend on $h$. 
\end{proof}
\end{lemma}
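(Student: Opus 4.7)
The three estimates are proved by the same argument, so I would prove only the one for $\hat\phi^{(J)h}$ and indicate that the cases of $f_t$ and $g^\rho_t$ are identical. By a standard density argument one may assume $\phi\in C_0^\infty$, since the final bound depends only on $|\phi|_{l+J+1}$ and both sides depend continuously on $\phi$ in $H^{l+J+1}$.

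The key observation is that $\phi^h(x)=\int_{\bR^d}\phi(x+hz)\psi(z)\,dz$ is, for each fixed $x$, a smooth function of $h$, and differentiating under the integral sign gives
\begin{equation*}
D_h^i\phi^h(x)\big|_{h=0}
=\int_{\bR^d}\partial_z^{i}\phi(x)\,\psi(z)\,dz=\phi^{(i)}(x),
\end{equation*}
where $\partial_z^i$ is the iterated directional derivative from \eqref{deltalambda}. Hence Taylor's formula \eqref{taylor} applied with $n=J$ to $f(h):=\phi^h(x)$ yields precisely the integral remainder form
\begin{equation*}
\hat\phi^{(J)h}(x)
=\frac{h^{J+1}}{J!}\int_{0}^{1}(1-\theta)^{J}\!\int_{\bR^d}
\partial_z^{J+1}\phi(x+\theta h z)\,\psi(z)\,dz\,d\theta.
\end{equation*}

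To estimate the $H^l$-norm of this remainder, I would apply Minkowski's integral inequality to bring $|\cdot|_l$ inside the double integral in $(\theta,z)$. Using the fact that each directional derivative $\partial_z^{J+1}$ is a finite linear combination of partial derivatives of order $J+1$ with coefficients that are polynomials in $z$ of degree $J+1$, and that the Lebesgue measure on $\bR^d$ is translation invariant, we get
\begin{equation*}
\bigl|\partial_z^{J+1}\phi(\cdot+\theta h z)\bigr|_l\leq N(1+|z|)^{J+1}|\phi|_{l+J+1}.
\end{equation*}
Since $\psi$ has compact support, the factor $(1+|z|)^{J+1}|\psi(z)|$ is integrable and contributes only to the constant $N=N(J,l,d,\psi)$. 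Putting everything together gives the desired bound $|\hat\phi^{(J)h}|_l\leq N|h|^{J+1}|\phi|_{l+J+1}$. The analogous bounds for $\hat f^{(J)h}_t$ and $\hat g^{(J)h\rho}_t$ follow verbatim (with $l_2$-norms in the $g$-case). There is no real obstacle here: this is a clean application of Taylor's theorem with remainder, combined with Minkowski's inequality and the compact support of $\psi$.
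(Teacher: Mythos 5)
Your proposal is correct and follows essentially the same route as the paper: Taylor's formula \eqref{taylor} applied to $f(h)=\phi^h(x)$ with integral remainder, followed by Minkowski's inequality and translation invariance of Lebesgue measure, with the compact support of $\psi$ absorbing the $z$-dependent factors into the constant. The extra detail you supply about $\partial_z^{J+1}$ being a combination of order-$(J+1)$ partials with polynomial coefficients in $z$ is a welcome clarification of a step the paper leaves implicit, but it is not a different argument.
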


Differentiating formally  equation \eqref{eqU} with respect to $h$ at $0$ 
and using the definition of $\cI^{(i)}$ in \eqref{I(n)}, 
we obtain the following system of SPDEs: 
\begin{align}                                                       
& dv^{(i)}_t+\sum_{1\leq j\leq i}  {i\choose j} \cI^{(j)}dv^{(i-j)}_t
=
\Big\{\cL^{(0)}_tv^{(i)}_t
+f^{(i)}_t+\sum_{1\leq j\leq i} {i\choose j}  \cL^{(j)}_t v_t^{(i-j)} \Big\}\,dt 
\nonumber \\
&\qquad \qquad +\Big\{\cM^{(0)\rho}_tv^{(i)}_t+g^{(i)\rho}_t
+\sum_{1\leq j\leq i} {i\choose j} \cM^{(j)\rho}_t v_t^{(i-j)}\Big\}\,dW^{\rho}_t    ,          \label{SPDE system}  \\                                                                      
&v^{(i)}_0(x)=\phi^{(i)}(x) ,                                \label{system ini}                                                             
\end{align}
for $i=1,2,....,J$, $t\in[0,T]$ and $x\in\bR^d$, where 
$\cL^{(0)}_t=\cL_t$, $\cM^{(0)\rho}_t=\cM^{\rho}_t$, 
and $v^{(0)}=u$ is the solution to \eqref{SPDE1}-\eqref{ini}.  

\begin{theorem}                                                                               \label{theorem system}
Let Assumptions \ref{assumption coeff} and \ref{assumption data} 
hold with $m\geq J+1$ for an integer $J\geq1$.  Let Assumptions 
\ref{assumption parab} through \ref{compatibility} be also satisfied. 
Then \eqref{SPDE system}-\eqref{system ini} has a unique solution $(v^{(0)},...,v^{(J)})$ such that 
$v^{(n)}\in  \mathbb W^{m+1-n}_2(0,T)  $ for every $n=0,1,...,J$. Moreover, $v^{(n)}$ 
is a $H^{m-n}$-valued continuous adapted process, and for every $n=0,1,...,J$ 
\begin{equation}                                                                \label{systemsol}
E\sup_{t\leq T}|v^{(n)}_{t}|^{2}_{m-n}
+E\int_{0}^{T}|v^{(n)}_{t}|^{2}_{m+1-n}\,dt
\leq N  E\mathfrak{K}_m^2  
\end{equation}
with a constant $N=N(m,J,d,T,\Lambda,\psi,\kappa)$  independent of $h$,   and   $\mathfrak{K}_m$   defined in \eqref{gothic_Km}.  
\end{theorem}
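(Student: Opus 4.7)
The plan is to proceed by induction on $i\in\{0,1,\ldots,J\}$, at each step recasting the $i$-th equation of \eqref{SPDE system} as a standard parabolic SPDE with $\cL_t$ and $\cM^\rho_t$ as principal operators and invoking Theorem \ref{theorem SPDE} at regularity level $m-i$.

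For the base case $i=0$, the equation reduces to \eqref{SPDE1}-\eqref{ini} (since $\cI^{(0)}$ is the identity operator and the sum over $1\le j\le 0$ is empty), and Theorem \ref{theorem SPDE} directly yields $v^{(0)}=u$ together with the required regularity and bound. For the inductive step, assume $v^{(0)},\ldots,v^{(i-1)}$ have been constructed as claimed. By the inductive hypothesis each $v^{(n)}$ with $n<i$ is an It\^o process whose drift $A^{(n)}_t$ and diffusion $B^{(n)\rho}_t$ can be read off from the $n$-th equation of \eqref{SPDE system} after successive substitution at lower levels. Since $\cI^{(j)}=\sum_\lambda R_\lambda(\lambda\cdot\nabla)^j$ is a bounded linear operator between appropriate Sobolev spaces, it commutes with both the Bochner and the stochastic integrals; substituting the expressions for $dv^{(i-j)}_t$, $j\ge 1$, into the left-hand side of \eqref{SPDE system} at level $i$ thus yields the canonical form
\[
dv^{(i)}_t = \bigl[\cL_t v^{(i)}_t + F^{(i)}_t\bigr]\,dt + \bigl[\cM^\rho_t v^{(i)}_t + G^{(i)\rho}_t\bigr]\,dW^\rho_t,\qquad v^{(i)}_0 = \phi^{(i)},
\]
with $F^{(i)}_t$ and $G^{(i)\rho}_t$ explicit linear combinations of $f^{(i)}_t$, $g^{(i)\rho}_t$, and of the terms $\cL^{(j)}_t v^{(i-j)}_t$, $\cM^{(j)\rho}_t v^{(i-j)}_t$, $\cI^{(j)}A^{(i-j)}_t$, $\cI^{(j)}B^{(i-j)\rho}_t$ for $j\ge 1$.

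Next I would verify that the quantity
\[
\mathfrak{K}^{(i)}_{m-i}:=|\phi^{(i)}|^2_{m-i}+\int_0^T\Bigl(|F^{(i)}_t|^2_{m-i-1}+\sum_\rho|G^{(i)\rho}_t|^2_{m-i}\Bigr)\,dt
\]
has expectation bounded by a constant times $E\mathfrak{K}_m^2$. The bound $|\phi^{(i)}|_{m-i}\le N|\phi|_m$ and its analogues for $f^{(i)}$ and $g^{(i)\rho}$ are immediate from their defining integral representations. The remaining terms are controlled by combining the regularity losses quantified in Lemma \ref{lemma hdiff} ($\cI^{(j)}$ costs $j$ derivatives, $\cL^{(j)}_t$ costs $j+2$, and $\cM^{(j)\rho}_t$ costs $j+1$) with the inductive control of $v^{(i-j)}$ in $\bW^{m+1-(i-j)}_2(0,T)$; the parabolic smoothing gained at each previous induction level supplies the extra derivative needed. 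Theorem \ref{theorem SPDE} applied with $m-i$ in place of $m$ then delivers the unique solution $v^{(i)}\in\bW^{m+1-i}_2(0,T)$, continuous in $H^{m-i}$, together with the bound \eqref{systemsol}; chaining the estimates from $n=0$ up to $n=J$ and absorbing all constants into a single $N=N(m,J,d,T,\Lambda,\psi,\kappa,K)$ will close the induction.

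The main obstacle I anticipate is the delicate bookkeeping of derivative counts, since each instance of the operators $\cI^{(j)}$, $\cL^{(j)}_t$, $\cM^{(j)\rho}_t$ consumes a precisely specified amount of regularity that must be compensated by the Sobolev smoothness produced at a previous induction step. The tightest matching occurs at $j=i$: the terms $\cL^{(i)}_t v^{(0)}_t$ and $\cI^{(i)}\cL_t v^{(0)}_t$ require exploiting simultaneously the pointwise $H^m$-continuity and the $L_2(0,T;H^{m+1})$-integrability of $v^{(0)}=u$ furnished by the base case, and, where needed, using the symmetry identities of Lemma \ref{lemR} to trade a derivative on the coefficients for one on the solution.
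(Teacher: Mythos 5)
Your proposal is correct and follows essentially the same route as the paper: induction on the level $i$, substitution of the lower-level equations into the $\cI^{(j)}dv^{(i-j)}_t$ terms to recast each equation in canonical parabolic form, control of the resulting free terms via Lemma \ref{lemma hdiff} together with the inductively obtained $\bW^{m+1-(i-j)}_2(0,T)$ bounds (the derivative count $\cL^{(j)}$ costing $j+2$ against $v^{(i-j)}\in L_2(0,T;H^{m+1-(i-j)})$ closes exactly as you indicate), and then an application of Theorem \ref{theorem SPDE} at level $m-i$. The only cosmetic difference is that the paper carries along the reduced free terms $\bar f^{(i)},\bar g^{(i)\rho}$ rather than the full drift and diffusion, which is equivalent bookkeeping.
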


\begin{proof} The proof is based on an induction argument.  
We can solve this system consecutively for $i=1,2,...,J$, 
by noticing that for each 
$i=1,2,...,k$ the equation 
for $v^{(i)}$ does not contain $v^{(n)}$ for $n=i+1,...,J$.  
For $i=1$ 
we have   $v^{(1)}_0 = \phi^{(1)}$ and 
\begin{align*}                                          
dv^{(1)}_t+\cI^{(1)}du_t
=&
\{\cL_tv^{(1)}_t
+f^{(1)}_t+\cL^{(1)}_tu_t\}\,dt \\                                                                    
&+\{\cM^{\rho}_t v^{(1)}_t+g^{(1)\rho}_t
+\cM^{(1)\rho}_t u_t\}\,dW^{\rho}_t, 
\end{align*} 
i.e.,
$$                                         
dv^{(1)}_t
=
(\cL_t v^{(1)}_t+\bar f^{(1)}_t)\,dt                                                                   
+(\cM^{\rho}_t v^{(1)}_t+\bar g^{(1)\rho}_t)\,dW^{\rho}_t, 
$$
with
\begin{align*}
\bar f_t^{(1)}:=&f_t^{(1)}-\cI^{(1)}  f_t  +(\cL^{(1)}_t-\cI^{(1)}\cL_t)  u_t,  \\
\bar g^{(1)\rho}_t:=&g^{(1)\rho}_t-\cI^{(1)}g^{\rho}_t+(\cM^{(1)\rho}_t-\cI^{(1)}\cM^{\rho}_t)  u_t. 
\end{align*}

By virtue of Theorem \ref{theorem SPDE}  this equation has a unique solution $v^{(1)}$ and 
\begin{align*}
 E\sup_{t\leq T} &  |v^{(1)}_t|_{m-1}^2  +E\int_0^T  |v^{(1)}_t|_{m}^2  \,dt \\
&\leq NE|\phi^{(1)} |^2_{m-1}+NE\int_0^T\big( |\bar f_t^{(1)}|^2_{m-2}+|\bar g^{(1)}_t|_{m-1}^2\big) \,dt. 
\end{align*}
Clearly, Lemma \ref{lemma hdiff}  implies 
\begin{align*}
& |\phi^{(1)}|^2_{m-1}\leq N |\phi|^2_{m}, \quad 
|f^{(1)}_t  |_{m-2}+|\cI^{(1)}f_t | _{m-2}\leq N |f_t |_{m-1}, \quad
 |g^{(1)\rho}_t -\cI^{(1)}g^{\rho}_t|_{m-1}\leq N|g^{\rho}_t |_{m}, \\
&
|(\cL^{(1)}_t -\cI^{(1)}\cL_t)u|_{m-2}\leq N|u|_{m+1},  
\quad 
\sum_{\rho}|(\cM^{(1)\rho}_t-\cI^{(1)}\cM^{\rho}_t)u|^2_{ m-1} \leq N^2|u|^2_{m+1}, 
\end{align*} 
with a constant $N=N(d,K,\Lambda,\psi,m)$  which does not depend on $h$. 
Hence for $m\geq 1$ 
\begin{align*}
& E\sup_{t\leq T}|v^{(1)}_t|^2_{m-1}+E\int_0^T|v^{(1)}_t|^2_{m}\,dt\\
&\qquad \leq NE|\phi|^2_{m}
+NE\int_0^T \big( |f_t|^2_{m-1}+|g_t|_{m}^2+|u_t|^2_{m+1}\big) \,dt
\leq N E  \mathfrak{K}^2_{m}. 
\end{align*}
Let $j\geq2$. Assume that for every $i<j$ the equation 
for $v^{(i)}$ has a unique solution 
such that \eqref{SPDE system} holds and that its equation can be written 
as  $v^{(i)}_0=\phi^{(i)}$ and 
$$
dv^{(i)}_t=(\cL_t  v^{(i)}_t+\bar f^{(i)}_t)\,dt+(\cM^{\rho}_t  v^{(i)}_t
+\bar g^{(i)\rho}_t)\,dW^{\rho}_t
$$
with adapted processes  $\bar f^{(i)}$ and $\bar g^{(i) \rho}$  
taking values in $H^{m-i-1}$ and 
$H^{m-i}$ respectively, such that 
\begin{equation}			\label{induc_1}
E\int_0^T \big( |\bar f^{(i)}_t|^2_{m-i-1}+|\bar g^{(i)}_t|^2_{m- i }\big) \,dt
\leq NE  \mathfrak{K}^2_m 
\end{equation}  
with a constant $N=N(K,J,m,d,T,\kappa,\Lambda,\psi)$  
independent of $h$.   Hence 
\begin{equation} 			                                           \label{induc_2}
E\Big( \sup_{t\in [0,T]} |v^{(i)}_t|_{m-i}^2 
+ \int_0^T |v^{(i)}_t|_{m+1-i}^2 dt\Big) 
\leq N E  \mathfrak{K}^2_m ,\quad i=1, ..., j-1.
\end{equation}

 Then for $v^{(j)}$ 
we have 
\begin{equation}                                                   \label{induction equation}
dv^{(j)}_t=(\cL_t v^{(j)}_t+\bar f^{(j)}_t)\,dt+
(\cM^{\rho}_t v^{(j)}_t+\bar g^{(j)\rho}_t)\,dW^{\rho}_t, 
\quad v^{(j)}_0=\phi^{(j)},
\end{equation}
with 
\begin{align*}
\bar f^{(j)}_t:=&f^{(j)}_t
+\sum_{1\leq i\leq j}  {j\choose i}  \big(\cL^{(i)}_t-\cI^{(i)}\cL_t\big)v_t^{(j-i)}-
\sum_{1\leq i\leq j}  {j\choose i}   \cI^{(i)}\bar f_t^{(j-i)}, \\
\bar g^{(j)\rho}_t:=&g^{(j)\rho}_t
+\sum_{1\leq i\leq j}  {j\choose i}   \big(  \cM^{(i)\rho}_t-\cI^{(i)}\cM_t^{\rho} \big) v_t^{(j-i)}-
\sum_{1\leq i\leq j}  {j\choose i}  \cI^{(i)}\bar g_t^{(j-i)\rho}. 
\end{align*}
Note that $|f^{(j)}_t|_{m-1-j}\leq N|f_t|_{m-1}$ ; by virtue of Lemma 
\ref{lemma hdiff},  and  by  the inequalities \eqref{induc_1} and \eqref{induc_2}   
we have 
\begin{align*}
E\int_0^T|(\cL^{(i)}_t-\cI^{(i)}\cL_t) v^{(j-i)}_t|^2_{m-j-1}\,dt
&\leq NE\int_0^T|v^{(j-i)}_t|^2_{m-j+1+i}\, dt
\leq NE  \mathfrak{K}^2_m  , 
\\
E\int_0^T|\cI^{(i)}\bar f^{(j-i)}_t|^2_{m-j-1}\,dt& 
\leq NE\int_0^T|\bar f^{(j-i)}_t|_{m-j+i-1}\,dt 
\leq NE \mathfrak{K}^2_m, 
\end{align*}
where $N=N(K,J,d,T,\kappa,\psi,\Lambda)$ 
denotes a constant which can be different 
on each occurrence. Consequently, 
$$
E\int_0^T|\bar f_t^{(j)}|_{m-j-1}^2\,dt\leq NE \mathfrak{K}^2_m, 
$$
and we can get similarly 
$$
E\int_0^T|\bar g_t^{(j)}|_{m-j}^2\,dt\leq NE \mathfrak{K}^2_m.  
$$
Hence \eqref{induction equation} has a unique solution $v^{(j)}$, and 
 Theorem \ref{theorem SPDE} implies that  the estimate \eqref{systemsol} holds 
for $v^{(j)}$ in place of $v^{(n)}$. This completes the induction and the proof 
of the theorem. 
\end{proof}
Recall that the norm $|\cdot|_{0,h}$  has been defined in \eqref{U_0h}. 
\begin{corollary}                                                                              \label{corollary2}
{\it Let Assumptions \ref{assumption coeff} and \ref{assumption data} 
hold with $m>\frac{d}{2}+J+1$ for an integer $J\geq1$.  
Let Assumptions 
\ref{assumption parab} through \ref{compatibility} be also satisfied. 
Then almost surely $v^{(i)}$ is continuous in $(t,x)\in[0,T]\times\bR^d$  for $i\leq J$, 
and its restriction 
to $\bG_h$  is an adapted continuous $\ell_2(\bG_h)$-valued process. 
Moreover, almost surely \eqref{SPDE system}-\eqref{system ini} 
hold for all $x\in\bR^d$ and $t\in[0,T]$, 
and 
$$
E\sup_{t\in[0,T]}\sup_{x}|v^{( j)}_t(x)|^2+E\sup_{t\leq T}  |v^{(j)}_t|^2_{0,h} 
\leq NE \mathfrak{K}^2_m,\quad j=1,2,..., J. 
$$
for some constant $N=N(m,J,d,T,\Lambda,\psi,\kappa)$ independent of $h$. 
}
\end{corollary}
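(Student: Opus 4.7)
The main tool is the Sobolev embedding. Since $m > \tfrac{d}{2}+J+1$ and $j \leq J$, we have $m-j > \tfrac{d}{2}+1$, so $H^{m-j}(\bR^d)$ embeds continuously into $C^1_b(\bR^d)$.

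First I would combine Theorem \ref{theorem system} with this embedding. By Theorem \ref{theorem system}, $v^{(j)}$ is an adapted $H^{m-j}$-valued continuous process with $E\sup_{t\leq T}|v^{(j)}_t|_{m-j}^2 \leq N E\mathfrak{K}_m^2$; composing with the embedding produces an adapted $C^1_b(\bR^d)$-valued continuous process. This gives at once the joint continuity of $(t,x)\mapsto v^{(j)}_t(x)$ on $[0,T]\times\bR^d$ a.s.\ (a $C_b$-continuous trajectory in $t$ automatically yields joint continuity in $(t,x)$), and the pointwise estimate $E\sup_{t}\sup_{x}|v^{(j)}_t(x)|^2 \leq N E\mathfrak{K}_m^2$ is immediate.

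Second, I would prove an elementary lattice inequality
$$|h|^{d}\sum_{x\in\bG_h}|\varphi(x)|^2 \;\leq\; N(s,d,\Lambda)\,|\varphi|_s^2,\qquad \varphi \in H^s,\;s>\tfrac{d}{2},\;|h|\in(0,1],$$
by combining the local Sobolev embedding $|\varphi(x)|^2\leq C|\varphi|_{H^s(B(x,1))}^2$ with a bounded-overlap argument: the number of grid points $x\in\bG_h$ for which a given $y\in\bR^d$ belongs to $B(x,1)$ is $O(|h|^{-d})$, which balances the weight $|h|^d$. Applying this with $s=m-j$ to $\varphi=v^{(j)}_t$ and using Theorem \ref{theorem system}, one obtains $E\sup_{t}|v^{(j)}_t|_{0,h}^2\leq N E\mathfrak{K}_m^2$. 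Adaptedness and continuity of the $\ell_2(\bG_h)$-valued restriction then follow from the boundedness of the restriction map $H^{m-j}\to \ell_2(\bG_h,|\cdot|_{0,h})$, which is a consequence of the same inequality.

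Finally, to verify that \eqref{SPDE system}-\eqref{system ini} holds pointwise in $x$ for all $t$ a.s., I would work with the integrated form of the equation. The identity $v^{(i)}_0=\phi^{(i)}$ is clear since both sides lie in $H^{m-i}\hookrightarrow C_b$. For the dynamics, rewrite the integrated equation as
$$v^{(i)}_t+\sum_{1\leq j\leq i}\binom{i}{j}\cI^{(j)}v^{(i-j)}_t \;=\; \text{(initial data)}+\int_0^t(\cdot)\,ds+\int_0^t(\cdot)\,dW^{\rho}_s.$$
By Lemma \ref{lemma hdiff} and Theorem \ref{theorem system}, the left-hand side lies in $H^{m-i}$ continuously in $t$ and therefore has a jointly continuous representative in $(t,x)$ via the embedding. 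The right-hand side equals the left-hand side in $H^{m-i}$ simultaneously for all $t\in[0,T]$ a.s.\ by Theorem \ref{theorem system}, so, once one shows that it too has a jointly continuous representative, evaluation at $x$ yields the pointwise identity. The main technical step is the joint continuity of the stochastic integral: its integrand is $H^{m-i-1}$-valued with $m-i-1>d/2$, and $H^{m-i-1}\hookrightarrow C_b$, so standard $L_2$-theory produces an $H^{m-i-1}$-valued continuous martingale, which under the embedding is a $C_b$-valued continuous martingale, i.e., jointly continuous in $(t,x)$. The deterministic drift integral is then reconstructed by subtraction and is itself jointly continuous.
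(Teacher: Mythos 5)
Your argument is correct and follows essentially the same route as the paper, which obtains the corollary from Theorem \ref{theorem system} via the Sobolev embedding $H^{m-j}\hookrightarrow C_b$ (valid since $m-j>\tfrac{d}{2}$) together with the lattice estimate of Lemma \ref{lemma4.6}. The only difference is that you supply a proof of that lattice estimate (by local Sobolev embedding plus a bounded-overlap count of grid points, which is the standard argument behind \cite{GK}, Lemma 4.2) and spell out the joint-continuity argument for the pointwise identity, both of which the paper leaves as cited or ``standard'' steps.
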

One can obtain this corollary from Theorem \ref{theorem system} by 
a standard application of Sobolev's embedding of $H^m$ into $C^{2}_b$ for $m>2+d/2$ 
and by using the following known result; see, for example \cite{GK}, Lemma 4.2.  
\begin{lemma}		\label{lemma4.6}
Let $\varphi\in H^m$ for $m>d/2$. Then there is a constant $N=N(d,\Lambda)$ such that 
$$
| I \varphi|^2_{0,h} \leq N|\varphi|^2_m, 
$$
where $I$ denotes the Sobolev embedding operator from $H^m$ into $C_b(\bR^d)$. 
\end{lemma}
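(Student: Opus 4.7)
The plan is to reduce the claim to the case $|h|=1$ by dilation, and then deduce the unit-spacing bound by covering $\bR^d$ with translates of a bounded cell for the lattice $\bG$ and applying the classical Sobolev embedding $H^m(Q)\hookrightarrow C_b(Q)$ on each cell.

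For the scaling step I would set $\psi(y):=\varphi(hy)$. A change of variables together with the identity $D^\alpha\psi(y)=h^{|\alpha|}(D^\alpha\varphi)(hy)$ give
$$
|I\varphi|_{0,h}^2 = |h|^d\sum_{x\in\bG_h}|\varphi(x)|^2 = |h|^d\sum_{y\in\bG}|\psi(y)|^2,\qquad |D^\alpha\psi|_0^2 = |h|^{2|\alpha|-d}|D^\alpha\varphi|_0^2.
$$
Summing the second identity over $|\alpha|\leq m$ yields $|h|^d|\psi|_m^2=\sum_{|\alpha|\leq m}|h|^{2|\alpha|}|D^\alpha\varphi|_0^2\leq|\varphi|_m^2$ whenever $|h|\leq 1$. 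It therefore suffices to prove the estimate $\sum_{y\in\bG}|\psi(y)|^2\leq N|\psi|_m^2$ for arbitrary $\psi\in H^m$, with $N$ depending only on $d$ and $\Lambda$.

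For the lattice Sobolev bound I would exploit that $\Lambda\subset\bQ^d$ is finite and symmetric: clearing a common denominator $q$ of the entries of $\Lambda$ places $\bG$ inside $\tfrac{1}{q}\bZ^d$, so $\bG$ is a discrete full-rank lattice in $\bR^d$ (assuming, as is implicit in the scheme, that $\Lambda$ spans $\bR^d$). I would then choose a bounded open set $Q\ni 0$ large enough that $\bG+Q=\bR^d$ with \emph{bounded overlap} $M=M(\Lambda,d)$, i.e.\ every point of $\bR^d$ lies in at most $M$ of the translates $y+Q$, $y\in\bG$. The embedding $H^m(Q)\hookrightarrow C_b(Q)$ for $m>d/2$, combined with translation invariance of Lebesgue measure, then yields
$$
|\psi(y)|^2 \leq C\sum_{|\alpha|\leq m}\int_{y+Q}|D^\alpha\psi(x)|^2\,dx\qquad\text{for every }y\in\bG,
$$
with a constant $C=C(m,d,Q)$. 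Summing over $y\in\bG$ and applying the bounded-overlap property gives $\sum_{y\in\bG}|\psi(y)|^2\leq CM|\psi|_m^2$, and combining with the first step concludes the proof for $|h|\leq 1$.

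The main obstacle is the geometric step producing the cell $Q$ with bounded overlap: it requires identifying $\bG$ as a genuine full-rank lattice rather than a lower-rank subgroup, which is precisely where the rational symmetric structure of $\Lambda$ (together with the non-degeneracy built into the finite-element construction) is used; once this is in hand, $Q$ can be taken to be, say, a ball containing a Voronoi cell of $\bG$, and the rest is a routine application of Sobolev embedding and translation invariance.
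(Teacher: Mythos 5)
Your proof is correct, and it is essentially the standard argument behind the result the paper merely cites (the paper gives no proof of Lemma \ref{lemma4.6}, deferring to \cite{GK}, Lemma 4.2): rescale to reduce to the unit lattice $\bG$ (using $|h|\le 1$, which is the regime in which the lemma is applied throughout the paper), then sum local Sobolev embeddings over translated cells. Two small remarks. First, the covering requirement $\bG+Q=\bR^d$, and hence your concern about $\bG$ being full rank, is not actually needed: since $\Lambda\subset\bQ^d$ is finite, clearing denominators gives $\bG\subset\tfrac1q\bZ^d$, and it suffices that the translates $y+Q$, $y\in\bG$, have bounded overlap (e.g.\ $Q$ a cube of side $1/q$), which holds whether or not $\bG$ spans $\bR^d$; the estimate $\sum_{y\in\bG}\int_{y+Q}\le M\int_{\bR^d}$ is all that is used. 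Second, the constant produced also depends on $m$ through the embedding $H^m(Q)\hookrightarrow C_b(Q)$; the paper's statement writes $N=N(d,\Lambda)$, but since $m$ is fixed this is immaterial.
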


\section{Proof of Theorem \ref{theorem main2}}

Define a random field $r^h$ by 
\begin{equation}                                                                      \label{rh}
r^h_t(x):=u^h_t(x)-\sum_{0\leq i\leq J}  v_t^{(i)}(x) \frac{h^i}{i!}, 
\end{equation}
where $(v^{(1)},...,v^{(J)})$ is the solution of \eqref{SPDE system} 
and \eqref{system ini}.

\begin{theorem}    			\label{theorem expansion1}
 Let Assumptions \ref{assumption coeff} and \ref{assumption data} 
hold with $m>\frac{d}{2}+2J+2$ for an integer $J\geq0$.  
Let Assumptions 
\ref{assumption parab} through \ref{compatibility} be also satisfied.                                                         
Then $r^h$ is an $\ell_2(\bG_h)$-solution of the equation 
\begin{align}                                                
\cI^h dr^h_t(x)=&\big( \cL^h_t r^h_t(x)+F^h_t(x)\big)\,dt+\big(\cM^{h,\rho}_t r^h_t(x)
+G^{h,\rho}_t(x)\big) \,dW^{\rho}_t    ,                                               \label{vh}\\
 r^h_0(x)=&\hat\phi^{(J)h}(x),                                                                      \label{vhini}
\end{align}
where $F^h$ and $G^h$ are adapted  $\ell_2({\mathbb G}_h)$-valued such that 
  for all $h\neq 0$ with $|h|\leq 1$  
\begin{equation}                                                                      \label{FhGh}                                                       
E\int_0^T \big( |F^h_t|^2_{\ell_2(\bG_h)}
+|G^h_t|^2_{\ell_2(\bG_h)}\big)\,dt\leq N  |h|^{2(J+1)} E{\mathfrak K}^2_m, 
\end{equation}
  where $N=N(m, K,J,d,T,\kappa,\Lambda,\psi)$  is a constant 
  which does not depend on $h$. 
\end{theorem}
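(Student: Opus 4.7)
The plan is to substitute the decomposition $U^h_t = \sum_{i=0}^{J}\frac{h^i}{i!}v^{(i)}_t + r^h_t$ into the grid equation \eqref{eqU} for $U^h$, and exploit the fact that the system \eqref{SPDE system}--\eqref{system ini} was engineered precisely so that $\sum_{i\le J}\frac{h^i}{i!}v^{(i)}_t$ reproduces, up to order $J$ in $h$, the principal part of $U^h_t$. What is left on the right-hand side of $\mathcal{I}^h dr^h_t$ beyond $\mathcal{L}^h_t r^h_t\,dt + \mathcal{M}^{h,\rho}_t r^h_t\,dW^\rho_t$ will be collected into $F^h_t\,dt + G^{h,\rho}_t\,dW^\rho_t$, and the residual initial datum will be identified as $\hat\phi^{(J)h}$ (reading \eqref{vh}--\eqref{vhini} in integrated form as $\mathcal{I}^h r^h_t = \hat\phi^{(J)h} + \int_0^t(\cdots)\,ds + \int_0^t(\cdots)\,dW^\rho_s$).

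To carry out the cancellation concretely, I expand every $h$-dependent quantity to order $J$: write $\mathcal{I}^h = \sum_{j=0}^J\frac{h^j}{j!}\mathcal{I}^{(j)} + \hat I^{(J)h}$ and similarly $\mathcal{L}^h_t$, $\mathcal{M}^{h,\rho}_t$ via \eqref{expand_LMIh}, and $\phi^h$, $f^h_t$, $g^{h,\rho}_t$ via \eqref{Taylor_free}. Using \eqref{SPDE system} I replace each $dv^{(k)}_t$ by its drift and diffusion representation. When $\mathcal{I}^h$ acts on $\sum_k\frac{h^k}{k!}dv^{(k)}_t$ and $\mathcal{L}^h_t,\mathcal{M}^{h,\rho}_t$ act on $\sum_k\frac{h^k}{k!}v^{(k)}_t$, the resulting double sums over $0\le j,k\le J$ split into the ranges $j+k\le J$ and $j+k>J$. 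Multiplying the $i$-th equation of \eqref{SPDE system} by $h^i/i!$ and summing for $i=0,\ldots,J$ produces exactly the $j+k\le J$ parts (after the reindexing $i=j+k$), so these cancel identically. The surviving contributions to $F^h_t$, $G^{h,\rho}_t$ are of three types: (a) tail double sums over $\{j+k>J,\,j,k\le J\}$, each carrying $|h|^{j+k}\ge |h|^{J+1}$; (b) the operator remainders $\hat I^{(J)h}$, $\hat L^{(J)h}_t$, $\hat M^{(J)h,\rho}_t$ applied to $\sum_{k\le J}\frac{h^k}{k!}v^{(k)}_t$ (and its drift/diffusion pieces); and (c) the free-data tails $\hat f^{(J)h}_t$, $\hat g^{(J)h,\rho}_t$.

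For the estimate \eqref{FhGh} I would bound each $\ell_2(\bG_h)$-piece of $F^h$, $G^h$ by an $H^{m^*}$-norm with $m^*>d/2$ via Lemma \ref{lemma4.6}, then apply Lemma \ref{lemma4.2}, e.g. $|\hat L^{(J)h}_t v^{(k)}_t|_{m^*}\le N|h|^{J+1}|v^{(k)}_t|_{m^*+J+3}$, for the operator remainders; apply Lemma \ref{lemma4.3} for the data remainders; and control each $E\int_0^T|v^{(k)}_t|^2_{m+1-k}\,dt$ by $NE\mathfrak K_m^2$ using Corollary \ref{corollary2}. The worst derivative demand is on $v^{(J)}$ in $H^{m^*+J+3}$, which requires $m+1-J \ge m^*+J+3$; with $m^*>d/2$ this pins down the hypothesis $m > d/2 + 2J + 2$. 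Summing over the finitely many surviving indices then yields \eqref{FhGh}.

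The main obstacle is not any single estimate but the systematic bookkeeping: tracking how $dv^{(k)}_t$ from \eqref{SPDE system} splits into drift and diffusion so that each summand is routed into the correct $F^h$ or $G^{h,\rho}$ bucket; verifying the exact cancellation of all $j+k\le J$ contributions across the $\mathcal{I}^h\sum(\cdot)$, $\mathcal{L}^h\sum(\cdot)$, $\mathcal{M}^{h,\rho}\sum(\cdot)$, $f^h$, and $g^{h,\rho}$ terms; and confirming that the analogous manipulation at $t=0$ produces precisely $\hat\phi^{(J)h}$ in the integrated formulation of the equation.
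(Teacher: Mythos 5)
Your proposal is correct and follows essentially the same route as the paper: substitute the ansatz into the grid equation, expand $\cI^h$, $\cL^h_t$, $\cM^{h,\rho}_t$ and the data via \eqref{expand_LMIh} and \eqref{Taylor_free}, cancel the $j+k\le J$ double sums using the recursive structure of \eqref{SPDE system} (the paper's terms $\mathcal T^h_t(1)$ and $\mathcal T^h_t(4)$), and estimate the surviving tails, operator remainders and data remainders via Lemmas \ref{lemma4.6}, \ref{lemma4.2}, \ref{lemma4.3} and the bounds \eqref{systemsol}. Your derivative count (worst case $v^{(J)}$ in $H^{m^*+J+3}$ with $m^*>d/2$, forcing $m>\frac{d}{2}+2J+2$) matches the paper's bookkeeping exactly.
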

\begin{proof}
Using \eqref{rh}, the identity $u^h_t(x)= U^h_t(x)$ for $x\in {\mathbb G}_h$ 
which is deduced from Assumption 
\ref{assumption uU} and equation \eqref{eqU}, 
we deduce that for $x\in {\mathbb G}_h$, 
\begin{align}				                                            \label{expansion-Bis}
d \big( {\mathcal I}^h & r^h_t(x)\big) 
= d {\mathcal I}^h U^h_t - \sum_{i=0}^J \frac{h^i}{i!}\, 
{\mathcal I}^h d v_t^{(i)}(x) \nonumber  \\
=& \big[ {\mathcal L}^h_t U^h_t(x) + f^h_t(x) ] dt 
+ \big[ {\mathcal M}^{h,\rho}_t U^h_t(x) + g^{h,\rho}_t(x) \big] dW^\rho_t 
- \sum_{i=0}^J \frac{h^i}{i!}\, {\mathcal I}^h d v_t^{(i)}(x) \Big). \nonumber \\
=& {\mathcal L}^h_t  r^h_t(x) dt + \Big[ {\mathcal L}^h_t 
\sum_{i=0}^J \frac{h^i}{i!}\, v^{(i)}_t(x)  +  f^h_t(x)\Big] dt 
+ {\mathcal M}^{h,\rho}_t r^h_t(x) dW^\rho_t \nonumber  \\
&\qquad +  \Big[ {\mathcal M}^{h,\rho}_t 
\sum_{i=0}^J \frac{h^i}{i!}\, v^{(i)}_t(x)  +  g^{h,\rho}_t(x)\Big] dW^\rho_t
- \sum_{i=0}^J \frac{h^i}{i!}\, {\mathcal I}^h  d v_t^{(i)}(x).
\end{align} 
Taking into account Corollary \ref{corollary2}, 
in the definition of $d v^{(i)}_t(x) $ 
in \eqref{SPDE system} we set  
\begin{equation}		                                                                    \label{coeff_vi}
  dv^{(i)}_t(x) 
= \big[ B(i)_t(x) + F(i)_t(x) \big]dt 
+ \big[ \sigma(i)^\rho_t (x) + G(i)^\rho_t (x)\big] dW^{\rho}_t,
\end{equation}
where $B(i)_t$ (resp. $\sigma(i)^\rho_t$) 
contains the operators ${\mathcal L}^{(j)}$ 
(resp. ${\mathcal M}^{(j)\rho}_t$) for $0\leq j\leq i$
while $F(i)_t$ (resp. $G(i)^\rho_t$) contains all the free terms $f^{(j)}_t$ 
(resp. $g^{(j)\rho}_t$) for $1\leq j\leq i$.
We at first focus on the deterministic integrals.   
Using the recursive definition of the processes $v^{(i)}$
in \eqref{SPDE system}, it is easy to see that
\begin{align}		\label{recursive_coeff}
B(i)_t + \sum_{1\leq j\leq i} {i\choose j} {\mathcal I}^{(j)} 
B(i-j)_t
= &\sum_{j=0}^i {i\choose j} {\mathcal L}^{(j)}_t v^{(i-j)}_t, \\
F(i)_t + \sum_{1\leq j\leq i} {i\choose j} {\mathcal I}^{(j)} F(i-j)_t = & f^{(i)}_t.			\label{recursive_free}
\end{align}
In the sequel, to ease notations we will not mention the space variable $x$. 
 Using the expansion of ${\mathcal L}_t^h$, ${\mathcal I}^h$ 
 and the definitions of $\hat{L}_t^{(J),h}$ and $\hat{I}^{(J),h}$ 
 in \eqref{expand_LMIh}, 
 the expansion of $f^h_t$ and the definition of $\hat{f}^{(J)h}_t$ 
 given in \eqref{Taylor_free} 
 together with the definition of $d v^{(i)}_t $ in \eqref{coeff_vi}, we deduce 
\[ 
\Big[ {\mathcal L}^h_t \sum_{i=0}^J \frac{h^i}{i!}\, v^{(i)}_t  +  f^h_t\Big]  dt 
-  \sum_{i=0}^J \frac{h^i}{i!}\, {\mathcal I}^h   \big[ B(i)_t^h
+ F^{(i)}_t\big] 
= \sum_{j=1}^6 {\mathcal T}^h_t(i) dt,
\]
where 
\begin{align*}
{\mathcal T}^h_t(1)
=& \sum_{i=0}^J \sum_{j=0}^i \frac{h^j}{j!} \frac{h^{i-j}}{(i-j)!}  
\big[  {\mathcal L}^{(j)}_t v^{(i-j)}_t  -  {\mathcal I}^{(j)} 
B(i)_t  \big], \\
{\mathcal T}^h_t (2)
=& 
\sum_{i=0}^J \sum_{\stackrel{0\leq j\leq J}{i+j\geq J+1}} 
\frac{h^i}{i!} \frac{h^j}{j!}  \big[ {\mathcal L}^{(i)}_t v^{(j)}_t
- {\mathcal I}^{(i)} B(j)_t\big], \\
{\mathcal T}^h_t (3)
=& \hat{L}^{(J),h}_t \sum_{i=0}^J \frac{h^i}{i!} v^{(i)}_t  
- \hat{I}^{(J),h} \sum_{i=0}^J \frac{h^i}{i!}  B(i)_t, \\
{\mathcal T}^h_t(4)
=& \sum_{i=0}^J   \frac{h^i}{i!} f^{(i)}_t - \sum_{i=0}^J  
\sum_{j=0}^i \frac{h^j}{j!} \frac{h^{i-j}}{(i-j)!} 
{\mathcal I}^{(j)} F(i-j)_t, \\
{\mathcal T}^h_t (5)
=& -  \sum_{i=0}^J \sum_{\stackrel{0\leq j\leq J}{i+j\geq J+1}} 
\frac{h^i}{i!} \frac{h^j}{j!} {\mathcal I}^{(j)} F(i)_t, \\
{\mathcal T}^h_t(6)
=& \hat{f}^{(J)h}_t - \sum_{i=0}^J  \frac{h^i}{i!}  \hat{I}^{(J)h} f^{(i)}_t.
\end{align*}
Equation  \eqref{SPDE system}   implies 
\[ {\mathcal T}^h_t(1) = \sum_{i=0}^J \frac{h^i}{i!} 
\Big[ {\mathcal L}^{(0)}_t v^{(i)}_t  + \sum_{j=1}^{i} {i\choose j} {\mathcal L}^{(j)}_t v^{(i-j)}_t
- B(i)_t - \sum_{j=1}^{i} {i\choose j} {\mathcal I}^{(j)} B(i-j)_t\Big] .
\]
Using the recursive equation  \eqref{recursive_coeff}  
we deduce  that for every $h>0$ and $t\in [0,T]$, 
\begin{equation} 		\label{T1}
{\mathcal T}^h_t(1)=0. 
\end{equation}
A similar computation based on \eqref{recursive_free} implies 
\begin{equation} 		\label{T4}
{\mathcal T}^h_t(4)=0. 
\end{equation}

In ${\mathcal T}^h_t(2)$ all terms  have a common factor $h^{J+1}$. 
We prove an upper estimate  of 
$$
E\int_0^T | {\mathcal L}^{(i)}_t v^{(j)}_t |^2_{0,h}\,dt
$$ for $0\leq i,j\leq J$. 
Let $I$ denote the Sobolev embedding operator from 
$H^k$ to $C_b(\RR^d)$ for $k>d/2$. Lemma \ref{lemma4.6},  
inequalities \eqref{estimate_hdiff} and 
\eqref{systemsol} imply that for $k>d/2$, 
\[ 
E\int_0^T\!\!  | I {\mathcal L}^{(i)}_t v^{(j)}_t |_{0,h}^2  dt  
\leq N E\int_0^T \!\!|  {\mathcal L}^{(i)}_t v^{(j)}_t |_k^2 dt \leq
N E\int_0^T\!  |v^{(j)}_t|_{i+k+2}^2  dt \leq N E \mathfrak{K}_{i+j+k+1}^2,
\] 
where the constant $N$  does not depend on $h$ 
and  changes from one upper estimate to the next. 
Clearly, for $0\leq i,j\leq J$ with $i+j\geq J+1$, we have 
$i+j+k+1>2J+1+\frac{d}{2}$. 
Similar computations prove that for $i,j\in \{0, ..., J\}$ 
with $i+j\geq J+1$ and $k>\frac{d}{2}$, 
\begin{align*}
E\int_0^T  \big|  I {\mathcal I}^{(i)} B(j)_t 
\big|_{0,h}^2 \,  dt 
\leq & \; N \sum_{l=0}^j  E\int_0^T \big| {\mathcal L}^{(l)}_t v^{(j-l)}_t \big|_{k+i}^2 \,  dt\\
\leq & \; N \sum_{l=0}^j  E\int_0^T \big|v^{(j-l)}_t \big|_{k+i+l+2}^2  \, dt\\
\leq &\;  N E{\mathfrak K}_{k+i+j+1}^2. 
\end{align*}
These upper estimates imply the existence of some constant 
$N$ independent of $h$ such that for $|h|\in (0,1]$  and $k>\frac{d}{2}$
\begin{equation}		\label{T2}
E\int_0^T |{\mathcal T}^h_t(2)|_{0,h}^2\,ds 
\leq N |h|^{2(J+1)}  E{\mathfrak  K}_{k+2J+1}^2.
\end{equation}
We next find an upper estimate of the $|\cdot|_{0,h}$ 
norm of both terms in ${\mathcal T}_t^h(3)$. 
Using Lemmas \ref{lemma4.6}, \ref{lemma4.2} and \eqref{systemsol} 
we deduce that for $k>\frac{d}{2}$ 
\begin{align*}
E\int_0^T \Big| I \hat{L}^{(J),h}_t \sum_{i=0}^J \frac{h^i}{i!} v^{(i)}_t\Big|_{0,h}^2 dt 
\leq & 
NE\int_0^T \Big|  \hat{L}^{(J),h}_t \sum_{i=0}^J \frac{h^i}{i!} v^{(i)}_t\Big|_k^2 dt \\
\leq & N |h|^{2(J+1)}\sum_{i=0}^J \int_0^T  \big| v^{(i)}_t\big|_{k+J+3}^2 dt \\
\leq & N |h|^{2(J+1)}  E{\mathfrak K}_{k+2J+2}^2,
\end{align*}
where $N$ is a constant independent of $h$ with $|h|\in (0,1]$. 
Furthermore, similar computations yield for $k>\frac{d}{2}$ and 
$|h|\in (0,1]$ 
\begin{align*}
E\int_0^T\Big| I \hat{I}^{(J),h} \sum_{i=0}^J \frac{h^i}{i!}  B(i)_t\Big|_{0,h}^2 dt 
\leq&
N E\int_0^T \Big| \sum_{i=0}^J \frac{h^i}{i!}   \hat{I}^{(J),h} B(i)_t\Big|_{k}^2 dt \\
\leq & 
N |h|^{2(J+1)} E\int_0^T \sum_{i=0}^J 
\Big| \sum_{l=0}^i {i\choose l} {\mathcal L}^{(l)}_t v^{(i-l)}_t \Big|_{k+J+1}^2 dt \\
\leq & N  |h|^{2(J+1)} \sum_{i=0}^J \sum_{l=0}^i  |v^{(i-l)}_t|_{k+J+l+3}^2 dt \\
\leq & N |h|^{2(J+1)} E {\mathfrak K}_{k+2J+2}^2.
\end{align*}
Hence we deduce the existence of a constant $N$ independent of $h$ 
such that for $|h|\in (0,1]$, 
\begin{equation}			                                             \label{T3}
E\int_0^T |{\mathcal T}^h_t(3)|_{0,h}^2 dt  
\leq N  |h|^{2(J+1)} E {\mathfrak K}_{k+2J+2}^2,
\end{equation} 
where $k>\frac{d}{2}$. 

We next compute an upper estimate of  the $|\cdot |_{0,h}$ norm of ${\mathcal T}^h_t(5)$. 
All terms have a common factor $h^{(J+1)}$.
Recall that ${\mathcal I}^{(0)}=Id$. 
The induction equation \eqref{recursive_free} shows that 
$F(i)_t$ is a linear combination of terms of the form 
$\Phi(i)_t:=\big( {\mathcal I}^{(a_1)} \big)^{k_1} ... \big( {\mathcal I}^{(a_i)} \big)^{k_i} f_t$ 
for $a_p, k_p\in \{0, ..., i\}$   for $1\leq p\leq i$ with $\sum_{p=1}^i a_p k_p=i$, 
and of terms of the form 
$\Psi(i)_t:=\big( {\mathcal I}^{(b_1)} \big)^{l_1} ... \big( {\mathcal I}^{(b_{i-j})} \big)^{l_{i-j}} f^{(j)}_t$ for $1\leq j\leq i$, 
$b_p, l_p\in \{0, ..., i-j\}$ for $1\leq p\leq i-j$ with $\sum_{p=1}^{i-j} b_p l_p+j=i$. 
Using Lemmas \ref{lemma4.6} and \ref{lemma hdiff} we obtain for  $k>\frac{d}{2}$, $i,j=1, ... J$
\begin{align*}
E\int_0^T  | I {\mathcal I}^{(j)} \Phi(i)_t|_{0,h}^2 dt 
\leq & N E\int_0^T| {\mathcal I}^{(j)} \Phi(i)_t(x)|_k^2\, dt \\
\leq & N  E\int_0^T |\Phi(i)_t|_{k+j}^2 dt\\
\leq & N E\int_0^T |f_t|_{k+j+a_1k_1+ \cdots a_ik_i}^2 dt\\
\leq & N E\int_0^T |f_t|_{k+i+j}^2 dt \leq N E{\mathfrak K}_{k+i+j}^2.
\end{align*}
A similar computation yields
\begin{align*}
E\int_0^T  | I {\mathcal I}^{(j)} \Psi(i)_t|_{0,h}^2\, dt 
\leq & N E\int_0^T
|f^{(i)}_t|_{k+j+b_1l_1+ \cdots + b_{i-j} l_{i-j}}^2\, dt\\
\leq & NE\int_0^T |f_t|_{k+j+(i-j)+j}^2\, dt\\
 \leq& N E{\mathfrak K}_{k+i+j}^2.
\end{align*}

These upper estimates imply that for $k>\frac{d}{2}$, 
there exists some constant $N$ independent on $h$ 
such that for $|h|\in (0,1)$ 
\begin{equation}		\label{T5}
E\int_0^T  |{\mathcal T}^h_t(5)|_{0,h}^2\,dt  
\leq N  |h|^{2(J+1)}E {\mathfrak K}_{k+2J}^2.
\end{equation}

We finally prove an upper estimate  of  the $|\cdot|_{0,h}$-norm of both terms in 
${\mathcal T}_t^h(6)$.  Using Lemmas \ref{lemma4.6} and
\ref{lemma4.3}, we obtain for $k>\frac{d}{2}$,
\begin{align*}
E\int_0^T \big| I \hat{f}^{(J)h}_t\big|_{0,h}^2\, dt  
 \leq & N E\int_0^T \big| \hat{f}^{(J)h}_t\big|_k^2 dt\\
\leq &  N  |h|^{2(J+1)} E\int_0^T |f_t|_{k+J+1}^2 dt\\
\leq & N |h|^{2(J+1)} E{\mathfrak K}_{k+J+1}^2,
\end{align*}
where $N$ is a constant which does not depend on $h$.  
Furthermore, Lemmas \ref{lemma4.6} and \ref{lemma4.2} 
yield for $k>\frac{d}{2}$ and $|h|\in (0,1]$,  
\begin{align*}
E\int_0^T \Big| I \sum_{i=0}^J \frac{h^i}{i!} \hat{I}^{(J)h} f^{(i)}_t  \Big|_{0,h}^2\, dt 
\leq & N E\int_0^T \Big| \sum_{i=0}^J  \frac{h^i}{i!} \hat{I}^{(J)h} f^{(i)}_t \Big|_k^2\,dt \\
\leq & N |h|^{2(J+1)}E\int_0^T \sum_{i=0}^J |f^{(i)}_t |_{k+J+1}^2 dt \\
\leq&  N  |h|^{2(J+1)} E{\mathfrak K}_{k+2J+1}^2,
\end{align*}
for some constant $N$ independent of $h$.
Hence  we deduce that for some constant $N$ which does not depend on $h$ and $k>\frac{d}{2}$, we have for $|h|\in (0,1]$ 
\begin{equation}		\label{T6}
E\int_0^T  |{\mathcal T}^h_t(6)|_{0,h}^2\,dt 
\leq N |h|^{2(J+1)} E {\mathfrak K}_{k+2J+1}^2.
\end{equation}

Similar computations can be made for the coefficients of the stochastic integrals. 
The upper bounds of the corresponding upper
estimates in \eqref{T2} and \eqref{T3} are still valid because the operators 
${\mathcal M}^\rho_t$ are first order operators while
the operator ${\mathcal L}_t$ is a second order one. 
This implies that all operators
${\mathcal M}^{h,\rho}_t$, ${\mathcal M}^{(i)\rho}_t$ and $\hat{M}^{(J)h}_t$ 
contain less derivatives than the corresponding ones deduced from
${\mathcal L}_t$.

Using the expansion \eqref{expansion-Bis}, 
the upper estimates \eqref{T1}-\eqref{T6} for the coefficients 
of the deterministic and stochastic 
integrals, we conclude the proof.
\end{proof}
	
We now complete the proof of our main result.
\begin{proof} [Proof of Theorem  \ref{theorem main2}] 
By virtue of Theorem \ref{theorem equ} and  Theorem \ref{theorem expansion1} 
we have for $|h|\in (0,1]$ 
$$
E\sup_{t\in[0,T]}|r^h_t|_{0,h}^2
\leq 
NE|\hat\phi^{(J)h}|^2_{0,h}
+NE\int_0^T \big( |F^h|^2_{0,h}+|G_h|^2_{0,h}\big) \,dt
\leq |h|^{2(J+1)} NE{\mathfrak K}^2_m. 
$$
Using Remark \ref{rkU-h} we have $U^{-h}_t=U^h_t$  
for $t\in [0,T]$ a.s. Hence from the expansion \eqref{expansion}
  we obtain that $v^{(j)}=-v^{(j)}$ for odd $j$,  
which completes the proof of Theorem \ref{theorem main2}. 
\end{proof}

\section{Some examples of finite elements}\label{FEdim1}

In this section we propose three examples of finite elements which satisfy  Assumptions \ref{assumption invertibility},  \ref{compatibility} and \ref{assumption uU}. 

\subsection{Linear finite elements in dimension 1}  \label{fel1}
Consider the following classical linear finite elements on $\RR$ defined as follows:
\begin{equation} \label{linFE}
\psi(x)=\big(1-|x|\big)\, 1_{\{ |x|\leq 1\}}.
\end{equation} 
Let $\Lambda = \{-1, 0, 1\}$; clearly $\psi$ and $\Lambda$ satisfy
the symmetry condition \eqref{sym_psi_Lambda}. 
 Recall that  $\Gamma$ denotes the set of elements $\lambda \in {\mathbb G}$ such that the intersection of the support
of $\psi_\lambda:=\psi^1_\lambda$ and of the support of $\psi$ has a positive Lebesgue measure.
Then  $\Gamma=\{ -1,0,1\}$,  the function $\psi$ is clearly non negative, $\int_{\RR} \psi(x) dx =1$, 
$\psi(x)=0$ for $x\in \ZZ\setminus \{0\}$ and Assumption \ref{assumption uU} clearly holds.

Simple computations show that
\[ R_0=2\int_0^1 x^2 dx = \frac{2}{3}, \quad R_{-1}=R_1=\int_0^1 x(1-x) dx = \frac{1}{6}.\]
Hence  $\sum_{\lambda\in \Gamma}R_\lambda=1$. 
Furthermore, given any $z=(z_n)\in \ell_2(\ZZ)$ we have using the Cauchy-Schwarz inequality:
\[ \sum_{n\in \ZZ} \Big( \frac{2}{3} z_n^2 + \frac{1}{6} z_n z_{n-1} + \frac{1}{6} z_n z_{n+1} \Big) \geq  \frac{2}{3} \|z\|^2 - \frac{1}{6} \sum_{n\in \ZZ}
\big( z_n^2 + z_{n+1}^2\big) = \frac{1}{3}\|z\|^2.
\] 
Hence Assumption \ref{assumption invertibility} is satisfied. 
Easy computations show that for $\epsilon \in \{-1,1\}$ we have 
\[ R^{11}_0 = -2,\quad R^{11}_{\epsilon}=1,\quad R^1_0=0\quad \mbox{\rm and} \; R^1_{\epsilon}=\frac{\epsilon}{2}.\]
Hence $\sum_{\lambda\in \Gamma}R^{11}_\lambda=0$, which completes the proof of \eqref{2.5.1}. Furthermore, 
$\sum_{\lambda \in \Gamma} \lambda R^1_\lambda =1$, which proves \eqref{2.5.2} while 
$\sum_{\lambda \in \Gamma} \lambda^2 R^{11}_\lambda = 2$, which proves \eqref{2.5.3.1}. 

Finally, we have for $\epsilon \in \{-1,1\}$
\[ 
 Q^{11,11}_0= -\frac{2}{3},\quad Q^{11,11}_\epsilon= \frac{1}{3},\quad 
 \tilde{Q}^{11}_0=0\quad \mbox{\rm and} \; \tilde{Q}^{11}_\epsilon = -\frac{\epsilon}{6}.
 \] 
 This clearly implies $\sum_{\lambda \in \Gamma} Q^{11,11}_\lambda=0$ and $\sum_{\lambda\in \Gamma} \tilde{Q}^{11}_\lambda =0$, which 
 completes the proof of \eqref{2.5.4}; therefore, Assumption \ref{compatibility} is satisfied. 

\bigskip 

The following example is an extension of the previous one to any dimension.

\subsection{A general example}				\label{general_example}
Consider the following finite elements on $\RR^d$ defined as follows:
let  $\psi$ be defined on $\RR^d$ by $\psi(x)=0$ if $x\notin (-1,+1]^d$ and 
\begin{equation}                                              \label{generic}
\psi(x)=\prod_{k=1}^d \big(1-|x_k|\big) \;  \mbox{\rm for } \; x=(x_1, ..., x_d) \in (-1,+1]^d. 
\end{equation}
The function $\psi$ is clearly non negative 
 and $ \int_{\RR^d} \psi(x) dx 
 =1$.
 Let $\Lambda = \{ 0, \, \epsilon_k e_k, \, \epsilon_k\in \{-1,+1\}, \, k=1, ...,d\}$.
 Then $\psi$ and $\Lambda$ satisfy the symmetry condition \eqref{sym_psi_Lambda}.} 
Furthermore, $\psi(x)=0$ for $x\in \ZZ^d\setminus \{0\}$; 
Assumption \ref{assumption uU} clearly holds. 


These finite elements also satisfy all requirements in Assumptions \ref{assumption invertibility}--\ref{compatibility}.
Even if these finite elements are quite classical in numerical analysis, we were not able to find a proof of these statements in the literature.
To make the paper self-contained the corresponding easy but tedious computations are provided in an Appendix.

\subsection {Linear finite elements on triangles in the plane} \label{linea-dim2}
We suppose that $d=2$ and want to check that the following finite elements satisfy Assumptions 
\ref{assumption invertibility}-\ref{assumption uU}. For $i=1, ..., 6$, let $\tau_i$ be the
triangles defined as follows:
\begin{align}\label{def_tau_i_lin2}
& \tau_1= \{ x\in \RR^2 : 0\leq x_2\leq x_1\leq 1\}, \; \tau_2=\{ x\in \RR^2 :  0\leq x_1\leq x_2\leq 1\}, \nonumber \\
& \tau_3=\{ x\in \RR^2 : 0\leq x_2\leq 1 , x_2-1\leq x_1\leq 0\},\; 
\tau_4=\{ x\in \RR^2  : -1\leq x_1\leq x_2\leq 0\}, \nonumber \\
& \tau_5=\{x\in \RR^2 : -1\leq x_2\leq x_1\leq 0\},\;  \tau_6=\{ x\in \RR^2 : 0\leq x_1\leq 1, x_1-1\leq x_2\leq 0\}.
\end{align}
{\includegraphics[scale=1]{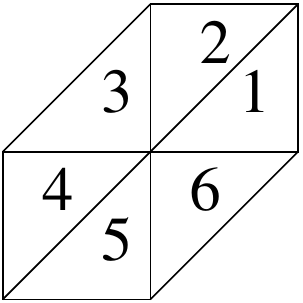}}

Let $\psi$ be the function defined by:
\begin{align} \label{def_Psi_lin2}
 &\psi(x)=1-|x_1| \, \mbox{\rm on } \tau_1\cup \tau_4, 
 \; \psi(x)=1-|x_2| \, \mbox{\rm on }  \tau_2\cup \tau_5, \nonumber  \\
 & \psi(x)=1-|x_1-x_2| \, \mbox{\rm on }   \tau_3\cup \tau_6, \;  \mbox{\rm and }\, \psi(x)=0 \; \mbox{\rm otherwise}  .
\end{align} 
 It is easy to see that the function $\psi$ is non negative 
and  that $\int_{\RR^2} \psi(x) dx=1$. Set $\Lambda = \{ 0, e_1, -e_1, e_2, -e_2\}$; 
the function $\psi$ and the set $\Lambda$
fulfill the symmetry condition \eqref{sym_psi_Lambda}.\\
  Furthermore,  
 $\Gamma = \big\{ \epsilon_1 e_1 + \epsilon_2 e_2 : (\epsilon_1, \epsilon_2)\in \{-1,0,1\}^2 , \; \epsilon_1\epsilon_2\in \{0,1\} \big\} $. 
Hence $\psi$ satisfies Assumption \ref{assumption uU}.
\smallskip

For ${\bf i}=(i_1,i_2)\in \ZZ^2$, let 
$\psi_{\bf i}$ the function defined by
 \[ \psi_{\bf i}(x_1,x_2) = \psi \big( (x_1,x_2) - {\bf i} \big) .\] 
For $\gamma =1, 2, ..., 6$, we denote by  $\tau_\gamma({\bf i})
=\big\{ (x_1,x_2) :  (x_1,x_2) - {\bf i} \in \tau_\gamma \big\}$.  Then 
\begin{align*}
 &D_1 \psi_{\bf i}= -1 \mbox{ \rm on } \tau_1({\bf i}) \cup \tau_6({\bf i})  
\mbox{ \rm and  } D_1 \psi_{\bf i}=1 \mbox{ \rm on } \tau_3({\bf i})\cup \tau_4({\bf i}), \\
&D_2 \psi_{\bf i}= -1 \mbox{ \rm on } \tau_2({\bf i}) \cup \tau_3({\bf i}) \mbox{ \rm and  } D_2 \psi_{\bf i}=1
 \mbox{ \rm on } \tau_5({\bf i})\cup \tau_6({\bf i}),\\
&D_1 \psi_{\bf i}=0 \mbox{ \rm on } \tau_2({\bf i})\cup \tau_5({\bf i}) \; 
\mbox{\rm and } D_2 \psi_{\bf i}=0 \mbox{ \rm on } \tau_1({\bf i})\cup \tau_4({\bf i}).
\end{align*}
Easy computations show that 
for ${\bf i}\in \ZZ^2$, and ${\bf k} \in \{ {\bf i}+\lambda : \lambda \in \Gamma \}$ 
\[ (\psi_{\bf i}, \psi_{\bf i})=\frac{1}{2}, \quad (\psi_{\bf i} , \psi_{\bf k}) = \frac{1}{12},\]
and $(\psi_{\bf i} , \psi_{\bf j}) =0$ otherwise. 
Thus 
\[ \sum_{\lambda\in \Gamma} R_\lambda =\sum_{\lambda\in \Gamma} (\psi, \psi_\lambda) = \frac{1}{2}+6\times \frac{1}{12}=1,\]
which proves the first identity in \eqref{2.5.1}.
 First we check that given any $\alpha \in (0,1)$ by 
Cauchy-Schwarz inequality we have some positive constants $C_1$ and $C_2$ 
such that 
\begin{align*}
|\sum_{\bf i} U_{\bf i} \psi_{\bf i} |_{L^2}^2 
\geq &   \sum_{\bf i} \int_0^\alpha dx_1 
\int_0^{x_1}  \big| (1-x_1) U_{\bf i}  + (x_1-x_2)  U_{{\bf i}+e_1} 
+ x_2 U_{{\bf i}+e_1+e_2}  \big|^2 dx_2\\
&+  \sum_{\bf i} \int_0^\alpha dx_2 \int_0^{x_2}  \big|  (1-x_2) U_{\bf i}  + (x_2-x_1) U_{{\bf i}+e_2}  
+ x_1 U_{{\bf i}+e_1+e_2}  \Big|^2 dx_1\\
\geq &  \|U\|^2 \big( \alpha^2 - C_1 \alpha^3 -C_2 \alpha^4\big) 
\end{align*}
for all $(U_{\bf i})\in\ell_2(\ZZ^2)$.   Hence, by taking $\alpha\in (0,1)$ 
such that 
$1-C_1 \alpha -C_2 \alpha^2 \geq \frac{1}{2}$, we see that 
Assumption \ref{assumption invertibility} is satisfied. 

\smallskip

We next check the compatibility conditions in Assumption \ref{compatibility}.  
Easy computations prove that for $k=1,2$ and $l\in \{1,2\}$ with $l\neq k$, 
$\epsilon_k, \epsilon_l \in \{-1,1\}$ we have
\begin{align*}
(D_k\psi, D_k\psi)=&2, \quad (D_k\psi, D_k\psi_{\epsilon_k e_k})=-1, \quad (D_k\psi, D_k\psi_{\epsilon_le_l})=0, 
\\
(D_k\psi, D_k\psi_\lambda)=&0 \; \mbox{\rm for } \lambda=\epsilon_1 e_1+\epsilon_2e_2, \; \epsilon_1\epsilon_2=1,
\end{align*}
while
\begin{align*}
(D_k \psi, D_l\psi)&=-1,\quad (D_k\psi,D_l\psi_{\epsilon_ke_k})=(D_k\psi, D_l\psi_{\epsilon_le_l})=\frac{1}{2},\\
(D_k\psi, D_l\psi_\lambda)&=-\frac{1}{2} \; \mbox{\rm for } \lambda=\epsilon_1 e_1+\epsilon_2e_2, \; \epsilon_1\epsilon_2=1.
\end{align*}
Hence for any $k,l=1,2$ and $l\neq k$ we have
\[ \sum_{\lambda\in \Gamma}(D_k\psi, D_k\psi_\lambda)=2  + 2 \times(-1)=0,\quad \sum_{\lambda\in \Gamma} 
(D_k\psi, D_l\psi_\lambda)=-1+4\times \frac{1}{2}  + 2\times \big( - \frac{1}{2}\big)  =0.\]
This completes the proof of equation $\sum_{\lambda\in \Gamma} R^{ij}_\lambda =0$ and hence of equation \eqref{2.5.1}. Furthermore,
given $k,l=1,2$ with $k\neq l$ we have for $\alpha =k$ or $\alpha =l$:
\begin{align*}
&\sum_{\lambda\in \Gamma} R^{kk}_\lambda \lambda_k\lambda_k
= - \sum_{\lambda\in \Gamma} (D_k\psi, D_k\psi_\lambda) \lambda_k\lambda_k=2\times 1^2 
=2,\\
&\sum_{\lambda\in \Gamma} R^{kk}_\lambda \lambda_l\lambda_l 
= - \sum_{\lambda\in \Gamma} (D_k\psi, D_k\psi_\lambda) \lambda_l\lambda_l=0, \\
&\sum_{\lambda\in \Gamma} R^{kk}_\lambda \lambda_k\lambda_l =
- \sum_{\lambda\in \Gamma} (D_k\psi, D_k\psi_\lambda) \lambda_k\lambda_l=0,\\
&\sum_{\lambda\in \Gamma} R^{kl}_\lambda \lambda_k\lambda_l 
= - \sum_{\lambda\in \Gamma} (D_k\psi, D_l\psi_\lambda) \lambda_k\lambda_l= \frac{1}{2} \times 1^2 +\frac{1}{2} (-1)^2 =1, \\
&\sum_{\lambda\in \Gamma} R^{kl}_\lambda \lambda_\alpha\lambda_\alpha =- \sum_{\lambda\in \Gamma} (D_k\psi, D_l\psi_\lambda) \lambda_\alpha\lambda_\alpha=0.
\end{align*}
The last identities come from the fact that $(D_k\psi, D_l\psi_{\epsilon e_k})$ ,  $(D_k\psi, D_l\psi_{\epsilon e_l})$ or
$(D_k\psi, D_l\psi_{\epsilon(e_1+e_2)}$ agree for $\epsilon=-1$ and $\epsilon=1$. This completes the proof of equation \eqref{2.5.3.1}.

We check the third compatibility condition.  Using  Lemma \ref{lemR} we deduce for $k,l=1,2$ with $k\neq l$ and $\epsilon\in \{-1,+1\}$ 
\begin{align*}
&(D_k \psi, \psi)= 0,\quad  (D_k \psi_{\epsilon e_k}, \psi)=\frac{\epsilon}{3},  \\
& (D_k \psi_{\epsilon e_l}, \psi)=- \frac{\epsilon}{6},\quad  (D_k \psi_{\epsilon(e_1+e_2)}, \psi)=  \frac{\epsilon}{6}.
\end{align*}
Therefore, using Lemma \ref{lemR} we deduce that 
\begin{align*}
&\sum_{\lambda\in \Gamma} (D_k \psi_\lambda, \psi) \lambda_k =   \frac{1}{3} + (-1)\times \big( -\frac{1}{3}\big) + \frac{1}{6} 
+  (-1)\times \big( - \frac{1}{6}\big)  =1, \\ 
&\sum_{\lambda\in \Gamma} (D_k \psi_\lambda, \psi) \lambda_l =- \frac{1}{6}  + \frac{1}{6}\times (-1) + \frac{1}{6} - \frac{1}{6} \times (-1)=0.
\end{align*}
This completes the proof of equation \eqref{2.5.2}. 

Let us check the first identity in \eqref{2.5.4}. 
Recall that 
\[ Q^{ij,kl}_\lambda=- \int_{\RR^2} z_k z_l D_i\psi(z) D_j\psi_\lambda(z) dz\]
and suppose at first that $i=j$. Then we have for  $k\neq i$, $\alpha\neq i$, $k\neq l$ and $\epsilon\in \{-1,+1\}$
\begin{align*}
Q^{ii,ii}_0=-\frac{2}{3}, &\quad Q^{ii,ii}_{\epsilon e_i}=\frac{1}{3},\quad Q^{ii,ii}_{\epsilon e_\alpha}=Q^{ii,ii}_{\epsilon (e_i+e_\alpha)}=0,\\
Q^{ii,kk}_0=-\frac{1}{3}, &\quad Q^{ii,kk}_{\epsilon e_i}=\frac{1}{6}, \quad Q^{ii,kk}_{\epsilon e_k}=Q^{ii,ii}_{\epsilon (e_i+e_k)}=0,\\
Q^{ii,kl}_0=-\frac{1}{6},& \quad Q^{ii,kl}_{\epsilon e_i}=\frac{1}{12}, \quad Q^{ii,kl}_{\epsilon e_\alpha}=Q^{ii,ii}_{\epsilon (e_i+e_\alpha)}=0.
\end{align*}
Suppose that $i\neq j$; then for $k\neq l$ and $\epsilon \in \{-1,+1\}$ we have
\begin{align*}
&Q^{ij,ii}_0=\frac{1}{6},\quad Q^{ij,ii}_{\epsilon e_j}=-\frac{1}{12},\quad Q_{\epsilon e_i}^{ij,ii}=-\frac{1}{4},\quad 
Q^{ij,ii}_{\epsilon(e_i+e_j)}=\frac{1}{4},\\
&Q^{ij,jj}_0=\frac{1}{6},\quad Q^{ij,jj}_{\epsilon e_i}=-\frac{1}{12},\quad Q_{\epsilon e_j}^{ij,jj}=-\frac{1}{12},\quad 
Q^{ij,jj}_{\epsilon(e_i+e_j)}=\frac{1}{12},\\
&Q^{ij,kl}_0=-\frac{1}{12},\quad Q^{ij,kl}_{\epsilon e_j}=\frac{1}{24},\quad Q_{\epsilon e_i}^{ij,kl}=-\frac{1}{8},\quad 
Q^{ij,  kl} _{\epsilon(e_i+e_j)}=\frac{1}{8}.\\
\end{align*}
The above equalities prove  $\sum_{\lambda\in \Gamma} Q^{ij,kl}_\lambda=0$
for any choice of $i,j,k,l=1,2$.  Hence   the first identity  in \eqref{2.5.4} is satisfied.

We finally  check the second identity in \eqref{2.5.4}.  Recall that $\tilde{Q}_\lambda^{i,k}=\int_{\RR^2} z_k D_i\psi_\lambda(z) \psi(z) dz$.
For $i=k\in \{1,2\}$, $j\in \{1,2\}$ with $i\neq j$ and $\epsilon \in \{-1,+1\}$   we have 
\[
\tilde{Q}_0^{i,i}= - \frac{3}{12}, \quad 
\tilde{Q}_{\epsilon e_i}^{i,i}=  \frac{3}{24}, \quad 
\tilde{Q}_{\epsilon e_j}^{i,i}= -  \frac{1}{24},\quad 
\tilde{Q}_{\epsilon (e_i+e_j)}^{i,i}=  \frac{1}{24}.
\] 
Hence $\sum_{\lambda\in \Gamma} \tilde{Q}_\lambda^{i,i}=0$. Let $i\neq k$; then for $\epsilon\in \{-1,+1\}$ we have 
\[ \tilde{Q}_0^{i,k} = \tilde{Q}_{\epsilon e_i}^{i,k}=0, \quad  \tilde{Q}^{i,k}_{\epsilon e_k}=- \frac{1}{12},\quad 
\tilde{Q}^{i,k}_{\epsilon (e_i+e_k)}= \frac{1}{12}. \]
Hence $\sum_{\lambda\in \Gamma} \tilde{Q}_\lambda^{i,k}=0$ for any choice of $i,k=1,2$, which concludes the proof of  \eqref{2.5.4}
Therefore, the function $\psi$ defined by \eqref{def_Psi_lin2} satisfies all Assumptions \ref{assumption invertibility}-\ref{assumption uU}.

\section{Appendix}			\label{appendix}
 The aim of this section is to prove that the example described in \ref{general_example} 
 satisfies Assumptions \ref{assumption invertibility} and \ref{compatibility}.


 For $k=1, ..., d$, let $e_k\in \ZZ^d$ denote the $k$-th unit vector of $\RR^d$; then $\bG=\ZZ^d$ and
\[\Gamma=\Big\{ \sum_{k=1}^d \epsilon_k e_k \; : \; \epsilon_k\in \{-1,0,1\} \, \mbox{\rm for } k=1, ... , d \Big\}. \]
For fixed $k=1, ... , d$ (resp. $k\neq l \in \{1, ... , d\}$) let
\begin{equation}				\label{I(k)}
 {\mathcal I}(k)=\{1, ... , d\} \setminus \{k\}, \quad \mbox{\rm resp. } {\mathcal I}(k,l)=\{1, ... , d\} \setminus \{k,l\}.
 \end{equation}

Note that in the particular case $d=1$, the functions $\psi$ gives rise to the classical linear finite elements.
Then for ${\bf i} \in {\mathbb Z}^d$, we have for $k=0, 1, ... , d$:
\begin{equation} \label{psi_i,psi_j}  
R_{\bf i}:= (\psi_{\bf i},\psi)= 
{d\choose k} \Big(\frac{1}{6}\Big)^k 
\Big( \frac{2}{3}\Big)^{d-k} \quad \mbox{\rm if } \; \sum_{l=1}^d |i_l|=k.
\end{equation}
Furthermore, given 
$k=0, 1, ... , d$, there are $2^k$ elements ${\bf i}\in \ZZ^d$ such that 
$\sum_{l=1}^d |i_l|=k$. 
Therefore, we deduce 
\[ 
\sum_{{\bf i}\in \ZZ^d} (\psi_{\bf i},\psi)= \sum_{k=0}^d 2^k {d\choose k} \Big(\frac{1}{6}\Big)^k \Big( \frac{2}{3}\Big)^{d-k}
= {d\choose k} \Big(\frac{2}{6}\Big)^k \Big( \frac{2}{3}\Big)^{d-k}=1,
\] 
which yields the first compatibility conditon in \eqref{2.5.1}.

We at first check that Assumption \ref{assumption invertibility} holds true, that is
\[ \delta \sum_{{\bf i}\in \ZZ^d} U_{\bf i}^2 = \delta |U|^2_{\ell_2(\ZZ^d)} 
\leq \Big|\sum_{{\bf i}\in \ZZ^d} U_{\bf i} \psi_{\bf i}\Big|_{L^2}^2 
=\sum_{{\bf i}, {\bf j}\in \ZZ^d} R_{{\bf i}-{\bf j}} U_{\bf i} U_{\bf j},  
\quad   U\in \ell_2(\ZZ^d).\]
for some $\delta >0$. 
 For $U\in \ell_2({\mathbb Z}^d)$ and $k=1, ... , d,$ let $T_k U=U_{e_k}$,  
 where $e_k$ denotes the $k$-th vector of the canonical basis.

For $U\in \ell_2(\ZZ^d)$ we have 
\begin{align*}
 \Big| \sum_{\bf i} &U_{\bf i} \psi_{\bf i}\Big|_{L^2}^2 
 = \sum_{\bf i\in \ZZ^d} \int_{[0,1]^d} 
 \Big[ U_{\bf i} \prod_{j=1}^d  ( 1-{x_j}) + \sum_{k=1}^d (T_k U)_{\bf i}\;  x_k 
 \prod_{j\in {\mathcal I}(k)}(1-x_j) \\
 & + \sum_{1\leq k_1<k_2\leq d} (T_{k_1} \circ T_{k_2} U)_{\bf i}\;  x_{k_1} x_{k_2} 
 \prod_{j\in {\mathcal I}(k_1,k_2)}(1-x_j) +\cdots  + (T_1 \circ T_2 \cdots \circ T_d U)_{\bf i} \prod_{k=1}^d x_k\Big]^{2} dx, 
\end{align*}
Given $\alpha \in (0,1)$ if we let 
\[ I(\alpha)\!=\!\!\int_0^\alpha \! (1-x)^2 dx 
= \alpha - \alpha^2 + \frac{\alpha^3}{3}, 
\; J(\alpha)\!=\!\! \int_0^\alpha\!  x(1-x) dx = 
\frac{\alpha^2}{2}-\frac{\alpha^3}{3}, 
\; K(\alpha)\!=\!\! \int_0^\alpha\!  x^2 dx = \frac{\alpha^3}{3} ,\]
restricting the above integral on the set $[0,\alpha]^d$, 
expanding the square and using the Cauchy Schwarz inequality 
we deduce the existence of some constants $C(\gamma_1,\gamma_2, \gamma_3)$ 
defined for $\gamma_i\in \{0,1, ... , d\}$ such that 
\begin{align*}
  \Big| \sum_{\bf i} U_{\bf i} \psi_{\bf i}\Big|_{L^2}^2  \geq &\sum_{\bf i} |U_{\bf i}|^2 
  \Big[ I(\alpha)^d + {d\choose 1} K(\alpha)\, I(\alpha)^{d-1} + {d\choose 2} K(\alpha)^2\,
 I(\alpha)^{d-2} + \cdots + K(\alpha)^d\Big] \\
 & -2 \Big( \sum_{\bf i}  |U_{\bf i}|^2 \Big) 
 \sum_{\gamma_1+\gamma_2+\gamma_3=d, \gamma_2+\gamma_3\geq 1} C(\gamma_1,\gamma_2, \gamma_3)\; I(\alpha)^{\gamma_1}\;
 J(\alpha)^{\gamma_2} \; K(\alpha)^{\gamma_3}\\
 \geq & |U|_{\ell_2({\mathbb Z}^d)}^2   
 \Big( \alpha^d -  \sum_{l=d+1}^{3d} C_l \alpha^l \Big), 
 \end{align*}
 where $C_l$ are some positive constants. Choosing $\alpha$ small enough, we have
 $ \big| \sum_{\bf i} U_{\bf i} \psi_{\bf i}\big|_{L^2}^2  \geq \frac{\alpha^d}{2}
|U|_{\ell_2({\mathbb Z}^d)}^2$, 
 which implies the invertibility Assumption \ref{assumption invertibility}.

We now prove that the compatibility Assumption \ref{compatibility} holds true. 
For $l=1, ... , d$, $n=0, ... , d-1$:
\begin{align}\label{D_lpsi_icase1}
 (D_l \psi_{\bf i} , D_l\psi) &= - \, 2^{d-1-n} \, \Big( \frac{1}{6}\Big)^n \, 
 \Big( \frac{1}{3}\Big)^{d-1-n}  \; \mbox{ for } |i_l|=1,  \;
 \sum_{r \neq l, 1\leq n\leq d} |i_r|=n,
 \\ \label{D_lpsi_icase0}
 (D_l \psi_{\bf i} , D_l \psi) &= + \, 2^{d-n} \, \Big(\frac{1}{6}\Big)^n \, 
 \Big( \frac{1}{3}\Big)^{d-1-n}  \; \mbox{ for } |i_l|=0,  \; 
 \sum_{r\neq l, 1\leq r\leq d} |i_r|=n. 
  \end{align} 
For $n=1, ... , d-1$ and $k_1<k_2<...  < k_n$ with $k_r\in {\mathcal I}(l)$,  
where   ${\mathcal I}(l)$ is defined in
\eqref{I(k)},  let
\begin{align*}
 \Gamma_l(k_1, ... , k_n)&=\Big\{ \sum_{r=1}^n \epsilon_{k_r} e_{k_r} \; : \; \epsilon_{k_r}\in \{ -1, 1\}, r=1, ... , n \Big\} ,\\
\Gamma_l(l;k_1, ... , k_n)&=\Big\{ \epsilon_l e_l + \sum_{r=1}^n \epsilon_{k_r} e_{k_r} \; : \; \epsilon_l \in \{-1,1\} \; 
\mbox{\rm and } \epsilon_{k_r}\in \{ -1, 1\}, r=1, ... , n\Big\} .
\end{align*}
Then $| \Gamma_l (k_1, ... , k_n)|=2^n$ while  $| \Gamma_l (l;k_1, ... , k_n)|=2^{n+1}$.  
For $l=1, ... , d$, the identities \eqref{D_lpsi_icase1} and \eqref{D_lpsi_icase0} imply 
\begin{align*}
\sum_{\lambda\in \Gamma}& (D_l \psi, D_l \psi_\lambda) = \Big[ (D_l\psi, D_l\psi) + \sum_{\epsilon_l\in \{-1,+1\}}
 (D_l \psi, D_l \psi_{\epsilon_l e_l})\Big]\\
& \quad + \sum_{n=1}^{d-1} \sum_{k_1<k_2<...  <k_n, k_j\in {\mathcal I}(l)} \Big[ \sum_{\lambda \in \Gamma_l(k_1,... , k_n)}
(D_l\psi, D_l\psi_\lambda )  + \sum_{\lambda \in \Gamma_l(l; k_1, ... , k_n)}  (D_l\psi, D_l\psi_\lambda) \Big]  \\
& = \Big[ 2^d \Big( \frac{1}{3}\Big)^{d-1} -2\times 2^{d-1} \Big( \frac{1}{3}\Big)^{d-1}\Big] \\
&\quad +\sum_{n=1}^{d-1}  2^n {d-1 \choose n} \Big[ 2^{d-n} \Big( \frac{1}{6} \Big)^n \Big( \frac{1}{3}\Big)^{d-1-n} -2\times 2^{d-1-n} \Big(\frac{1}{6}\Big)^n  \Big( \frac{1}{3}\Big)^{d-1-n}\Big] =0 .
\end{align*}
This proves the second identity in \eqref{2.5.1} when $i=j$. Furthermore, \eqref{D_lpsi_icase1} implies 
\begin{align*}
\sum_{\lambda\in \Gamma} R^{ll}_\lambda \lambda_l \lambda_l &= - 
\sum_{\lambda\in \Gamma} (D_l \psi, D_l \psi_\lambda) \lambda_l \lambda_l = - \sum_{\epsilon_l\in \{-1,1\}}
\big(D_l \psi, D_l \psi_{\epsilon_l e_l}  \big)
 \\
 &\quad 
- \sum_{n=1}^{d-1} \sum_{k_1<k_2<...  <k_n, k_j\in {\mathcal I}(l)} \sum_{\lambda\in 
\Gamma_l(l;k_1, ... , k_n)} (D_l \psi, D_l \psi_\lambda)\\
&= 2\times 2^{d-1} \Big(\frac{1}{3}\Big)^{d-1} + \sum_{d=1}^n {d-1 \choose n} 2^{n+1} \times 2^{d-1-n} \Big( \frac{1}{6}\Big)^n
\Big( \frac{1}{3}\Big)^{d-1-n}  \\
&= 2 \sum_{n=0}^{d-1} {d-1 \choose n} \Big(\frac{2}{6}\Big)^n \Big(\frac{2}{3}\Big)^{d-1-n} =2,\quad l=1,... ,d. 
\end{align*}
 Furthermore, given $k\neq l \in \{1, ... , d\}$, 
\begin{align*} 
\sum_{\lambda\in \Gamma} R^{ll}_\lambda \lambda_k\lambda_k&=
- \sum_{\lambda \in \Gamma} (D_l \psi, D_l \psi_\lambda) \lambda_k \lambda_l=0.
\end{align*}
Indeed, for $n=1, ... , d-1$, $k_1<...  < k_n $ where $k_r\in {\mathcal I}(l)$ and at least one of the indices $k_r$ is equal to $k$ for
$r=1, ... ,n$, given $\lambda\in \Gamma_l(k_1, ... , k_n)$ we have using \eqref{D_lpsi_icase1} and \eqref{D_lpsi_icase0}
\[ \sum_{\epsilon_l\in \{-1,1\}} (D_l\psi,D_l\psi_{\epsilon_le_l+\lambda}) \lambda_k \lambda_l =
 - 2^{d-1-n} \Big(\frac{1}{6}\Big)^n \Big(\frac{1}{3}\Big)^{d-1-n} \times  (-1+1)=0.\]
This proves the second identity in \eqref{2.5.3.1} when both derivatives agree. 

Also note that for $k\neq l \in \{1, ... , d\}$ we have $\sum_{\lambda \in \Gamma} R^{kl}_\lambda=0$. 
Indeed, for  $\lambda$ as above 
\begin{align*}  (D_k \psi, D_l \psi_\lambda)  &+ \sum_{\epsilon_l\in \{-1,1\}} (D_k\psi, D_l\psi_{\epsilon_le_l+\lambda}) \\
&= 2^{d-n} \Big(\frac{1}{6}\Big)^n \Big(\frac{1}{3}\Big)^{d-1-n} -2\times 2^{d-1-n} \Big(\frac{1}{6}\Big)^n \Big(\frac{1}{3}\Big)^{d-1-n} =0,
\end{align*}
while $R^{kl}_\lambda=0$ for other choices of $\lambda \in \Gamma$.

We now study the case of mixed derivatives. 
Given $k\neq l \in \{1, ... , d\}$ recall that  ${\mathcal I}(k,l)=\{ 1, ... , d\} \setminus \{k,l\}$. 
Then for $k\neq l \in \{1, ... , d\}$ and ${\bf i} \in \ZZ^d$ we have for $n=0, ... , d-2$
\begin{align}
(D_k \psi_{\bf i}, D_l \psi )&=0 \; \mbox{ \rm if } |i_k \, i_l|\neq 1, \label{DkDl0}\\
(D_k \psi_{\bf i}, D_l \psi)&= - \Big(\frac{1}{2}\Big)^2 \; \Big( \frac{1}{6}\Big)^n \; \Big(\frac{2}{3}\Big)^{d-n-2} \; 
\mbox{\rm if }\;  i_k\, i_l=1,  \quad 
 \sum_{r\in {\mathcal I}(k,l)} |i_r|=n, \label{DkDl2} \\
(D_k \psi_{\bf i}, D_l \psi)&= + \Big(\frac{1}{2}\Big)^2 \; \Big( \frac{1}{6}\Big)^n \; \Big(\frac{2}{3}\Big)^{d-n-2} \; 
\mbox{\rm if }\;  i_k \, i_l=- 1, \quad 
\sum_{r\in {\mathcal I}(k,l)} |i_r|=n. \label{DkDl1}
\end{align}

For $n=1, ... , d-2$ and $k_1<...  <k_n$ with $k_r\in {\mathcal I}(k,l)$ for $r=1,..., n$, set
\[ \Gamma_{k,l}(k_1, ... , k_n)=\Big\{ \sum_{r=1}^n \epsilon_{k_r} e_{k_r} \; : \; \epsilon_r\in \{-1,1\} \Big\}.\]
For $n=0$ there is no such family of indices $k_1<... <k_n$ and we let
 $\Gamma_{k,l}(\emptyset)=\{0\}$. Thus for $n=0, ... , d-2$, $| \Lambda_{k,l}(k_1, ..., k_n)|=2^n$. 
Using the identities \eqref{DkDl0}-\eqref{DkDl1} we deduce
 \begin{align}\label{sum_DkDl}
\sum_{\lambda\in \Gamma} &(D_k \psi, D_l \psi_\lambda) = \sum_{n=0}^{d-2} \; \sum_{k_1<k_2<...  <k_n, k_r\in {\mathcal I}(k,l)} 
\sum_{\lambda \in \Gamma_{k,l}(k_1,... , k_n)} \big[ (D_k\psi, D_l\psi_{e_k+e_l+\lambda}) \nonumber \\
&\quad +   (D_k\psi, D_l\psi_{e_k-e_l+\lambda})+
(D_k\psi, D_l\psi_{-e_k+e_l+ \lambda}) + (D_k\psi, D_l\psi_{-e_k-e_l+\lambda}) \big]  \nonumber \\
=&  \sum_{n=0}^{d-2} {d-2 \choose n} \, 2^n \, \Big[ - \Big( \frac{1}{2}\Big)^2 \, \Big(\frac{1}{6}\Big)^n \, \Big(\frac{2}{3}\Big)^{d-2-n}
+  \Big( \frac{1}{2}\Big)^2 \, \Big(\frac{1}{6}\Big)^n \, \Big(\frac{2}{3}\Big)^{d-2-n} \nonumber \\
&\quad +
 \Big( \frac{1}{2}\Big)^2 \, \Big(\frac{1}{6}\Big)^n \, \Big(\frac{2}{3}\Big)^{d-2-n}
  -  \Big( \frac{1}{2}\Big)^2 \, \Big(\frac{1}{6}\Big)^n \, \Big(\frac{2}{3}\Big)^{d-2-n}\Big] =  0,\quad k\neq l.
\end{align}
This completes the proof of the second identity in \eqref{2.5.1} when $i\neq j$, and hence \eqref{2.5.1} holds true.
Furthermore, the identities \eqref{DkDl2} and \eqref{DkDl1} imply   for  $i\neq j \in \{1, ... , d\}$
 and $\{i,j\}=\{k,l\}$  
 \begin{align*}
\sum_{\lambda\in \Gamma} &(D_k \psi, D_l \psi_\lambda)\lambda_k \lambda_l 
= \sum_{n=0}^{d-2} \; \sum_{k_1<k_2<...  <k_n, k_r\in {\mathcal I}(k,l)} 
\sum_{\lambda \in \Gamma_{k,l}(k_1,... , k_n)} \big[ (D_k\psi, D_l\psi_{e_k+e_l+\lambda}) \\
&\quad - (D_k\psi, D_l\psi_{e_k-e_l+\lambda})
- (D_k\psi, D_l\psi_{-e_k+e_l+ \lambda}) + (D_k\psi, D_l\psi_{-e_k-e_l+\lambda}) \big] \\
=&  -  4 \, \Big( \frac{1}{2}\Big)^2 \sum_{n=0}^{d-2} {d-2 \choose n} \,  2^n  \, \Big(\frac{1}{6}\Big)^n \, \Big(\frac{2}{3}\Big)^{d-2-n} 
=-  \sum_{n=0}^{d-2} {d-2 \choose n} \, \Big(\frac{2}{6}\Big)^n \Big( \frac{2}{3}\Big)^{d-2-n}= -1.
\end{align*}
Equation \eqref{DkDl0} proves that $(D_k \psi, D_l \psi_\lambda)=0$ if $|\lambda_k \lambda_l|\neq 1$. Hence using 
\eqref{sum_DkDl} we deduce  that for any $r=1, ... , d$, 
\[  \sum_{\lambda\in \Gamma} (D_k \psi, D_l \psi_\lambda)\lambda_r \lambda_r =0.\]
Let $r\in {\mathcal I}(k,l)$ and for $n=1, ... , d-3$, let $k_1<...  < k_n$ be such that $k_j\in \{1, ... , d\} \setminus \{k,l,r\}$
and $\lambda = \sum_{j=1}^n \epsilon_{k_j} e_{k_j}$ for $\epsilon_{k_j}\in \{-1,1\}$, $j=1, ... , n$. 
Then for any choice of $\epsilon_k$ and $\epsilon_l$ in $\{-1,1\}$ the equalities \eqref{DkDl2} and \eqref{DkDl1} imply that
\[ (D_k\psi, D_l \psi_{\lambda + \epsilon_k e_k + \epsilon_l e_l + e_r} )  
=  (D_k\psi, D_l \psi_{\lambda + \epsilon_k e_k + \epsilon_l e_l - e_r} )   .\]
This clearly yields that for $r\in {\mathcal I}(k,l)$ we have
\[ \sum_{\lambda\in \Gamma} (D_k \psi, D_l \psi_\lambda)\lambda_k \lambda_r =  
\sum_{\lambda\in \Gamma} (D_k \psi, D_l \psi_\lambda)\lambda_l \lambda_r =0.\] 
Finally, given $n=2, ... , d$ and  $k_1<... <k_n$ where
the terms $k_j\in {\mathcal I}(k,l)$, then given any choice of $\epsilon_k$ and $\epsilon_l$ in $\{-1,1\}$, the value of 
$(D_k \psi, D_l \psi_{\epsilon_k e_k + \epsilon_l e_l + \lambda})$ does not depend on the value of 
$\lambda\in \Gamma_{k,l}(k_1, ..., k_n)$. 
Therefore, if we fix  $r_1\neq r_2$ in the set ${\mathcal I}(k,l)$, for fixed $n$  there are as many choices of indices 
$k_1<... <k_n$ such that $\epsilon_{r_1} \epsilon_{r_2}=1$ that of such indices such that
$\epsilon_{r_1} \epsilon_{r_2}=-1$. This proves
\[ \sum_{\lambda \in \Gamma} (D_k\psi, D_l\psi_\lambda) \lambda_{r_1} \lambda_{r_2}=0,\]
which completes the proof of the first identity in \eqref{2.5.3.1} for mixed derivatives; hence \eqref{2.5.3.1} holds true. 

We now check the compatibility condition \eqref{2.5.2}. 
Fix $j\in \{1, ... , d\}$; then
\begin{equation}\label{Dj0}
(D_j\psi, \psi)= 2^{d-1} \Big( \prod_{k\neq j} \int_0^1 (1-x_k)^2 dx_k\Big) \Big[  \int_0^1 (-1) (1-x_j) dx_j 
+  \int_{-1}^0 (1+x_j) dx_j\Big] =0, 
\end{equation}
while
\begin{align} \label{ej}
(D_j \psi, \psi_{e_j})&=2^{d-1} \Big( \prod_{k\neq j} \int_0^1 (1-x_k)^2 dx_k\Big) \int_0^1  (-1) \big(1+(x_j-1)\big) dx_j = - \frac{1}{2}
\Big(\frac{2}{3}\Big)^{d-1}, \nonumber \\
(D_j \psi, \psi_{-e_j})&=2^{d-1} \Big( \prod_{k\neq j} \int_0^1 (1-x_k)^2 dx_k\Big) \int_{-1}^0  \big(1- (x_j+1)\big) dx_j =  \frac{1}{2}
\Big(\frac{2}{3}\Big)^{d-1}. 
\end{align}
For $n=1, ... , d-1$ and $k_1<... <k_n$ where the indexes $k_r$, $r=1, ... , n$ are different from $j$ we have for any 
$\lambda\in \Gamma_j(k_1,... , k_n)$ 
\begin{align}\label{Dj-noej}
(D_j\psi, \psi_{\lambda})  =& 2^{d-(n+1)} \Big(\prod_{k\in \Gamma\setminus \{j,k_1, ... , k_n\}} \int_0^1 (1-x_k)^2 dx_k\Big)
\times \Big( \prod_{r=1}^n \int_0^1 x_{k_r} (1-x_{k_r}) dx_{k_r} \Big) \nonumber  \\
&\times \Big[  \int_0^1 (-1)  (1-x_j) dx_j + \int_{-1}^0   (1+x_j) dx_j\Big] = 0,
\end{align} 
while 
\begin{align}\label{ej+lambda}
(D_j\psi, \psi_{e_j+\lambda})  =& 2^{d-(n+1)} \Big(\prod_{k\in \Gamma\setminus \{j,k_1, ... , k_n\}} \int_0^1 (1-x_k)^2 dx_k\Big)
\times \Big( \prod_{r=1}^n \int_0^1 x_{k_r} (1-x_{k_r}) dx_{k_r} \Big) \nonumber \\
&\times \int_0^1 (-1) \big( 1+ (x_j-1)\big) dx_j = -\frac{1}{2} \;  \Big( \frac{2}{3}\Big)^{d-(n+1)} \; \Big( \frac{1}{6}\Big)^n ,
\end{align} 
and 
\begin{align}\label{-ej+lambda}
(D_j\psi, \psi_{-e_l+\lambda})  =& 2^{d-(n+1)} \Big(\prod_{k\in \Gamma\setminus \{j,k_1, ..., k_n\}} \int_0^1 (1-x_k)^2 dx_k\Big)
\times \Big( \prod_{r=1}^n \int_0^1 x_{k_r} (1-x_{k_r}) dx_{k_r} \Big) \nonumber  \\
&\times \int_{-1}^0  \big( 1- (x_j+1)\big) dx_j = \frac{1}{2} \;  \Big( \frac{2}{3}\Big)^{d-(n+1)} \; \Big( \frac{1}{6}\Big)^n . 
\end{align}
Note that the number of terms $ (D_j\psi, \psi_{\epsilon_l e_l+\lambda})   $ with $\epsilon_l=-1$ or $\epsilon_l =+1$ is equal to 
${d-1 \choose n} 2^n$.
Therefore, the identities \eqref{Dj0}-\eqref{-ej+lambda} imply that for any $j=1, ... , d$ we have
\begin{align} \label{compat_iii-j}
\sum_{\lambda\in \Gamma}& \lambda_j R^j_\lambda =- \sum_{\lambda\in \Gamma} (D_j \psi, \psi_\lambda)\; \lambda_j = 
 \frac{1}{2} \Big(\frac{2}{3}\Big)^{d-1}  -  \frac{1}{2} \Big(\frac{2}{3}\Big)^{d-1} (-1) \\
& + \frac{1}{2} \sum_{n=1}^{d-1} {d-1 \choose n}  2^n   
\Big( \frac{2}{3}\Big)^{d-1-n}
\Big( \frac{1}{6}\Big)^n  
-  \frac{1}{2}\sum_{n=0}^{d-1} {d-1 \choose n} 2^n  \Big( \frac{2}{3}\Big)^{d-1-n}
\Big( \frac{1}{6}\Big)^n \times (-1) \nonumber \\
 = &   \sum_{n=0}^{d-1} {d-1 \choose n} \Big( \frac{2}{6}\Big)^n \Big(\frac{2}{3}\Big)^{d-1-n} = 1.
\end{align}
This proves \eqref{2.5.2} when $i=k$. 

Let $k\neq j \in \{1, ..., d\}$ and given $n=1, ... , d-1$ let $k_1<... < k_n$ be indices that belong to ${\mathcal I}(j)$
such that one of the indices $k_r, r=1, ..., n$ is equal to $k$. Given any $\lambda\in \Gamma_j(k_1, ..., k_n)$ we deduce that
\[ (D_j \psi, \psi_{e_l+\lambda}) \lambda_k + (D_j \psi, \psi_{-e_l+\lambda}) \lambda_k =0.\]
This completes the proof of the  identity \eqref{2.5.2}. 

In order to complete the proof of the validity of Assumption \ref{compatibility}, it remains to check that the identities in \eqref{2.5.4} hold true. 
Recall that for $\lambda\in \Gamma$ and $i,j,k,l\in \{1, ...,d\}$ we have 
\[ Q_\lambda^{ij,kl}=\int_{\RR^d} z_k z_l D_j\psi_\lambda(z)  D_i^\ast \psi(z)  dz
=-\int_{\RR^d} z_k z_l D_j\psi_\lambda (z) D_i \psi(z) dz.\]
For $p=1, ..., 4$, $n=1,..., d-p$ and $i_1,..., i_p \in \{1, ... ,d\}$ with $i_1, ...,  i_p$ pairwise different let
\begin{align*}
 \mathcal{I}_n(i_1, ..., i_p):=\Big\{ &\sum_{\alpha=1}^n \epsilon_\alpha e_{k_\alpha} ; \epsilon_\alpha\in \{-1,+1\}, 
 \; 1\leq k_1<... <k_n\leq d, \\
& k_\alpha \not\in \{i_1, ...,   i_p\}  \, \mbox{\rm for } \alpha=1, ..., n\Big\}, 
\end{align*}
and $ \mathcal{I}_0(i_1, ..., i_p)=\{0\}$. 

First suppose that $i=j$. \\
First let $k=l=i$; then for $n=0, ..., d-1$ and $ \mu\in \mathcal {I}_n(i)$ we have 
\[ Q_{\mu }^{ii,ii} + Q_{\mu + e_i}^{ii,ii} + Q_{\mu - e_i}^{ii,ii}=0.\]
Let  $k=l$ with $k\neq i$; then then for  $n=0, ..., d-1$ and $\mu \in \mathcal{I}_n(i)$ we have
\[ Q_{\mu}^{ii,kk} + Q_{\mu + e_i}^{ii,kk} + Q_{\mu -e_i}^{ii,kk}=0. \]
Let  $l=i$ and $k\neq i$; then  for $n=0, ..., d-2$, $\epsilon \in \{-1,+1\} $ and $ \mu\in \mathcal {I}_n(i,k)$ we have
\[ Q_{\mu + \epsilon e_i + e_k}^{ii,ki} + Q_{\mu + \epsilon e_i - e_k}^{ii,ki}=0.\]
 A similar result holds for $k=i$ and $l\neq i$.  
Furthermore,   $Q_\lambda^{ii,ki}=0$ is $\lambda$ is not equal to $\mu + \epsilon e_i + e_k$ or $\mu + \epsilon e_i - e_k$ for 
$\mu \in \mathcal{I}_n(i,k)$ for some $n$.

Let  $k\neq l$ with $k\neq i$ and $l\neq i$; then for $n=0, ..., d-2$, $\epsilon \in \{-1,+1\}$ and $\mu\in \mathcal {I}_n(k,l)$
we have 
\[ Q_{\mu + \epsilon e_k + e_l}^{ii,kl} + Q_{\mu + \epsilon e_k - e_l}^{ii,kl}=0,\]
while $Q_\lambda^{ii,kl}=0$ is $\lambda$ is not equal to $\mu + \epsilon e_i + e_k$ or $\mu + \epsilon e_i - e_k$ for  $\mu \in 
\mathcal {I}_n(i,k)$ for some $n$.

We now suppose that $i\neq j$. \\
First suppose that  $k=i$ and $l=j$; then for  $n=0, ..., d-1$ and $\mu \in \mathcal{I}_n(i)$ we have
 \[ Q_{\mu}^{ij,ij} + Q_{\mu + e_j}^{ij,ij} + Q_{\mu -e_j}^{ij,ij}=0. \] 
Let $k=l=i$; then for $n=0, ..., d-2$, $\epsilon \in \{-1+1\}$
 and $\mu \in \mathcal {I}_n(i,j)$ we have
\[ Q_{\mu + \epsilon e_i +e_j}^{ij,ii} + Q_{\mu + \epsilon e_i -e_j}^{ij,ii}=0,\]
while $Q_\lambda^{ij,ii}=0$ is $\lambda$ is not equal to $\mu+\epsilon e_i +e_j$ or $\mu + \epsilon e_i-e_j$ where $\mu\in \mathcal {I}_n(i,j)$
for some $n$. A similar result holds exchanging $i$ and $j$ for $k=l=j$. \\
Let  $k=l$ with $k\not\in \{i,j\}$ and $l\not\in \{i,j\}$; then for $n=0, ..., d-2$, $\epsilon\in \{-1,+1\}$  and $\mu\in \mathcal{I}_n(i,j)$  we have 
\[ Q_{\mu + \epsilon e_i +e_j}^{ij,kk} + Q_{\mu + \epsilon e_i -e_j}^{ij,kk}=0,\]
while $Q_\lambda^{ij,kk}=0$ is $\lambda$ is not equal to $\mu+\epsilon e_i +e_j$ where $\mu + \epsilon e_i-e_j$ for $\mu\in \mathcal {I}_n(i,j)$
for some $n$. \\
Let $l=i$ and $k\not\in \{i,j\}$; then for  $n=0, ..., d-2$, $\epsilon \in \{-1+1\}$
 and $\mu \in \mathcal {I}_n(i,k)$ we have
\[ Q_{\mu + \epsilon e_i +e_k}^{ij,ki} + Q_{\mu + \epsilon e_i -e_k}^{ij,ki}=0,\]
while $Q_\lambda^{ij,ki}=0$ is $\lambda$ is not equal to $\mu+\epsilon e_i +e_k$ or $\mu + \epsilon e_i-e_k$ where
 $\mu\in \mathcal {I}_n(i,k)$
for some $n$. A similar result holds exchanging $i$ and $j$ for $k=l=j$. \\
Finally, let $k\neq l$ with $k\not\in \{i,j\}$ and $l\not\in \{i,j\}$;    then for  $n=0, ..., d-4$, $\epsilon_i, \epsilon_j, \epsilon_k \in \{-1+1\}$
 and $\mu \in \mathcal {I}_n(i,j,k,l)$ we have
\[ Q_{\mu + \epsilon_i e_i +\epsilon_j e_j + \epsilon_k e_k +e_l}^{ij,kl} + Q_{\mu + \epsilon_i e_i +\epsilon_j e_j + \epsilon_k e_k -e_l}^{ij,kl}=0,\]
while $Q_\lambda^{ij,kl}=0$ is $\lambda$ is not equal to $\mu + \epsilon_i e_i +\epsilon_j e_j + \epsilon_k e_k +e_l$ or
$\mu + \epsilon_i e_i +\epsilon_j e_j + \epsilon_k e_k -e_l$  where $\mu\in \mathcal {I}_n(i,j,k,l)$
for some $n$.
These computations complete the proof of the first identity in \eqref{2.5.4}. Recall that for $i,k\in \{1, ..., d\}$ and $\lambda\in \Gamma$ 
we let
\[ \tilde{Q}_\lambda^{i,k}:=\int_{\RR^d} z_k D_i\psi_\lambda(z) \psi(z) dz. \]
Let $k=i$; for $n=0, ..., d-1$ and $\mu\in \mathcal{I}_n(i)$ we have
\[ \tilde{Q}^{i,i}_\mu + \tilde{Q}^{i,i}_{\mu + e_i} + \tilde{Q}^{i,i}_{\mu-e_i}=0.\]
Let $k\neq i$; for $n=0, ..., d-2$,  $\epsilon\in \{-1,0, +1\}$   and $\mu\in \mathcal{I}_n(i,k)$ we have
\[ \tilde{Q}_{\mu + \epsilon e_i + e_k}^{i,k} + \tilde{Q}_{\mu + \epsilon e_i - e_k}^{i,k}=0\]
while $\tilde{Q}_\lambda^{i,k}=0$ if $\lambda$ is not equal to $\mu+\epsilon e_i +e_k$ or $\mu + \epsilon e_i-e_k$ where
 $\mu\in \mathcal {I}_n(i,k)$ for some $n$.
 This completes the proof of the second identity in \eqref{2.5.4}; therefore Assumption \ref{compatibility}
 is satisfied for these finite elements. This completes the verification of the validity of Assumptions \ref{assumption invertibility}-\ref{compatibility} 
 for the function  $\psi$ defined by \eqref{generic}.
 \bigskip

 \noindent{\bf Acknowledgements} This work started while Istv\'an Gy\"ongy was invited professor at the University Paris 1 Panth\'eon Sorbonne.
  It was completed when Annie Millet was invited by the University of Edinburgh. Both authors want to thank the University Paris 1, the 
  Edinburgh Mathematical Society and the Royal Society of Edinburgh for their financial support. 
The authors want to thank anonymous referees for their careful reading and helpful remarks.

\bigskip

\end{document}